\newtheorem{theorem}{Theorem}[section]
\numberwithin{equation}{section}
\newtheorem{corollary}[theorem]{Corollary}
\newtheorem{definition}[theorem]{Definition}
\newtheorem{example}[theorem]{Example}
\newtheorem{lemma}[theorem]{Lemma}
\newtheorem{proposition}[theorem]{Proposition}
\newtheorem{remark}[theorem]{Remark}
\newtheorem*{Theorem*}{Theorem}
\newtheorem{maintheorem}{Theorem}
\newtheorem{maincorollary}[maintheorem]{Corollary}
\newcommand{\cmc}{\begin{maincorollary}}
\newcommand{\fmc}{\end{maincorollary}}
\renewcommand{\varepsilon}{\epsilon}
\newcommand{\m}{m}
\DeclareMathOperator{\Cor}{Cor}
\title[Statistical properties of dynamical systems via induced WGM maps]
{Statistical properties of dynamical systems\\ via induced weak Gibbs Markov maps}
\author[A. Ullah]{Asad Ullah}
\address{Centro de Matem\'atica e Aplica\c{c}\~oes (CMA-UBI), Universidade da Beira
	Interior, Rua Marqu\^es d'\'Avila e Bolama, 6201-001, Covilh\~a, Portugal.}
\email{asad.ullah@ubi.pt}
\author[H. Vilarinho]{Helder Vilarinho}
\email{helder@ubi.pt}
\date{\today}
\keywords{Weak Gibbs Markov map; Young Tower; Decay of Correlations; Central Limit Theorem; Large Deviations}
\subjclass{37A05, 37A25, 37A50}
\thanks{A. Ullah and H. vilarinho were partially supported by Funda\c{c}\~ao para a Ci\^encia e a Tecnologia (FCT) 
through Centro de Matemática e Aplicações (CMA-UBI), Universidade da Beira Interior, under the project
UIDB/00212/2020. A. Ullah was also supported by FCT under the grant number UI/BD/150796/2020.}
\begin{document}

\begin{abstract}
In this article, we address the decay of correlations for dynamical systems that admit an induced weak Gibbs Markov map (not necessarily full branch). Our approach generalizes L.-S. Young's coupling arguments to estimate the decay of correlations for the tower map of the induced weak Gibbs Markov map in terms of the tail of the return time function. For that we initially discuss how to ensure the mixing property of the tower map. Additionally, we yield results concerning the Central Limit Theorem and Large Deviations.
\end{abstract}

 \maketitle 



\section{Introduction}
 The induced scheme developed by L.-S. Young in  \cite{Y98, Y99} is certainly among the most powerful tools for studying the statistical properties of non-uniformly hyperbolic dynamical systems. Roughly speaking,  it involves considering a region of the phase space partitioned into subdomains where a  return time is defined (not necessarily the first one) and establishing an induced map within that region that satisfies several good properties.
In this approach there is an explicit connection between the tail of the return times and the decay of correlations for the map, at least for some specific rates.
In \cite{Y98}, the existence of  a hyperbolic structure for given dynamical systems was assumed, from which mixing rates at exponential speeds were obtained.
On the other hand, in \cite{Y99} an abstract model is considered allowing different decay rates. More precisely, 
it was introduced a suitable dynamical system, nowadays known as Young towers, based on an induced full branch  map, referred as Gibbs Markov (GM) map, and some statistical properties for this tower system were derived. The idea is then to carry those results to a dynamical system with such induced scheme.
A Central Limit Theorem (CLT) was also obtained in both works.
   
The aim of this paper is to obtain similar to those in~\cite{Y99} under weaker assumptions where the induced map is not necessarily full branch, that we refer as a \textit{weak Gibbs Markov} (WGM) map (see Definition~\ref{def:WGM}).

Constructing an induced scheme with the desired properties is often a difficult task. Furthermore, if an induced WGM map is obtained, transitioning to a full branch scenario is not always straightforward.
There are some examples in the literature where an induced WGM map is constructed. In some cases, as for instance in \cite[\S6]{Y99}, \cite{Gouezel reproducing full return GM} and \cite{Peyman Paper}, the induced WGM map is used to get a GM map in order to apply the abstract results from \cite{Y99}.  In other situations, the step to get a full branch map seems   extremely laborious or may not be possible; c.f. \cite{Gouezel skew product}. 

The list of works concerning the construction of an induced GM map and the derived statistical properties is too extensive to be included here. Some references focusing on the construction of induced GM map in one dimensional maps are e.g.~\cite{Decay of correlation via full return induced GM map, Wobbly Intermittent map, doubly intermittent map, Statistical properties via full return induced GM map, M.Holland construct induced map}, and for two dimensional maps refer to e.g.~\cite{Peyman Paper, Peyman and Melbourn}. Other works explore the consequences of having induced GM maps, as seen in~\cite{Invariance Principle via full return induced GM,Large deviation via full return induced GM}. We also refer to~\cite{Alves Book, About existnace of full return Induced GM map, extended result of L.S.Y, D.Corrlation for Henon map, H. Bruin upper and lower via inducing, Billiards with poly-mixing, Large deviation on Young tower,  Lynch paper, Decay of Correlation via renewal operator,  Sarig Polynomial decay of correlation via renewal operator, exponentional decay of correlation via coupling cones} and references therein for just an overview of the theory.
In the opposite direction, it was shown in \cite{expanding measure implies full return GM, statistical properties implies full return GM} that for a large class of maps, having an induced GM map is a necessary condition for some statistical properties. 

This work provides abstract results to be applied to dynamical systems with an induced WGM map, even when the existence of an induced GM map is not guaranteed. There are some previous works in this context. 
In~\cite{Gouezel Polynomial decay of correlation via renewal operator} the author considers a topologically mixing Markov map, preserving the reference probability measure and an induced WGM map was defined with respect to this reference measure. An operator renewal theory  was used as a method to obtain sharp polynomial bounds for decay of correlations and also a CLT, both for observables supported in the inducing region. In contrast, we do not assume that the reference measure is preserved by the dynamics and that the observables are supported in the induced region. As a method we follow L.-S. Young's coupling arguments to get estimates on decay of correlations, CLT and Large Deviations.
In~\cite{V. Maume Decay of correlation on Tower} the author addresses the decay of correlations for a mixing tower map associated with an induced WGM map via Birkhoff's cones and projective metrics. This work was extended in~\cite{J. Buzzi Paper} allowing the mixing tower map to have a non-H\"older Jacobian. In particular the Gibbs property (bounded distortion) for the tower map is replaced by a summable variation condition. The setting considered in present work is different than \cite{J. Buzzi Paper, V. Maume Decay of correlation on Tower} as we provide sufficient conditions on the induced map to obtain a mixing tower system. Moreover, due to the different techniques, the estimates on the decay of correlation we obtain are slightly better.

In this work, we assume the existence of an induced WGM map. With additional hypothesis on the induced map we build a mixing tower map. We obtain the decay of correlations for this tower map  in terms of the tail of the return time function. 
The results then go back to the original system through a semi-conjugacy, providing an abstract result for dynamical systems admitting induced WGM maps. We also deduce a CLT and Large Deviations for those dynamical systems.

We would like to share some remarks concerning the lack of full branch property:
\begin{enumerate}[i)]
	\item The density of the invariant measure for the WGM map does not necessarily have a positive lower bound (Theorem~\ref{ergodic measure for irreducble WGM}). This positive lower bound is commonly known to exist for GM maps. 	
	\item The invariant ergodic probability measure for the tower map of an induced WGM map $f^R$ is not mixing, even under the condition $\gcd\{R\}=1$. We provide sufficient conditions to ensure the invariant measure to be mixing (see Section~\ref{Mixing measure for TM}).
	\item The choice of $ n_{0} $ in the definition of the sequence of \emph{stopping times} (Section~\ref{se:dc on tower}) is made by using the long branch property in such a way that we obtain ~\cite[Lemmas 1 and 2]{Y99} without the full branch property.
	\item The \emph{weak} hypothesis does not guarantee a symmetry as in~\cite[(1)]{Y99} (see also~\cite[Lemmas 3.39 and 3.43]{Alves Book}). This implies that the quantity $ I_{2} $ in Proposition~\ref{first Matching Proposition} does not vanish. 
	\end{enumerate}

This paper is organized as follows.
In Section \ref{relvant def and main results},  we introduce the necessary definitions and outline the main results.  In Section \ref{Properties of WGM}, we recall some theory about WGM maps. In Section \ref{Mixing measure for TM}, we discuss the mixing invariant probability measure for the tower map.
In Section \ref{se:dc on tower}, we obtain the decay of correlation for the tower map and then we conclude the results for the original dynamical system. Lastly, in Section~\ref{sec:CLT}, we discuss CLT.

\section{Preliminaries and statement of main results}\label{relvant def and main results}
Consider a measure space $(\Delta_0,\mathcal A,\m)$, with $0<m(\Delta_{0})<\infty$, a measurable map $ F: \Delta_{0}\rightarrow \Delta_{0}$ and an ($\m$ mod 0) finite or countable partition $\mathcal{P}_{0}$ of $ \Delta_{0} $ into invertibility  domains of $F$, that is, $F$ is a bijection from each $\omega\in \mathcal{P}_{0}$ to $F(\omega)$, with measurable inverse.
For each $n\geq0$ we set
$$ F^{-n}(\mathcal{P}_{0})=\{F^{-n}(\omega): \omega\in \mathcal{P}_{0}\}.$$
Assuming $F_{*}m\ll m$, we have that $ F^{-i}(\mathcal{P}_{0}) $ is an ($\m$ mod 0) partition of $ \Delta_{0} $, for all $i\geq 0$.
In this case we have a sequence $(\mathcal P_0^n)_n$ of ($\m$ mod 0) partitions given by
\begin{equation*}
	\mathcal{P}_{0}^{n}=\bigvee_{i=0}^{n-1}F^{-i}(\mathcal{P}_{0})=\{\omega_{0}\cap F^{-1}(\omega_{1})\cap...\cap F^{-n+1}(\omega_{n-1}): \omega_{0},\ldots,\omega_{n-1}\in \mathcal{P}_{0}\},
\end{equation*} 
for $n\geq 1$, and the ($\m$ mod 0) partition
\begin{equation*}
\mathcal P_0^\infty=\bigvee_{i=0}^{\infty}F^{-i}(\mathcal{P}_{0})=\{\omega_{0}\cap F^{-1}(\omega_{1})\cap\cdots: \omega_{n}\in \mathcal{P}_{0} \text{ for all } n\geq0\}.
\end{equation*}
We say that the sequence $(\mathcal P_0^n)_{n}$ is \textit{a basis of  $\Delta_{0}$} if it generates $\mathcal{A}$ ($\m$ mod 0) and $\mathcal P_0^\infty$ is the partition into single points.

\begin{definition}\label{def:WGM}
We say that $F: \Delta_{0}\rightarrow \Delta_{0}$ is a weak Gibbs Markov (WGM) map, with respect to the  partition $\mathcal{P}_{0}$, if the following hold:
\begin{enumerate}[W$1$)]
\item \label{def:WGM:Markov} Markov: $F$ maps each $\omega\in \mathcal{P}_{0}$ bijectively to an ($\m$ mod 0) union of elements of $\mathcal{P}_{0}$.

\item Separability: the sequence $(\mathcal P_0^n)_{n}$ is a basis of  $\Delta_{0}$.

\item Nonsingular: there exists a strictly positive measurable function $J_{F}$ defined on $\Delta_0$ such that for each $A\subset\omega\in \mathcal{P}_{0}$,
\begin{equation*}
	m(F(A))=\int_{A}J_{F}dm.
\end{equation*}
\item Gibbs: there exist $C_{F}>0$ and $0<\beta<1$ such that for all  $\omega\in \mathcal{P}_{0}$ and $x,y\in \omega$,
\begin{equation*}
	\log\frac{J_{F}(x)}{J_{F}(y)}\leq C_{F}\beta^{s(F(x), F(y))},
\end{equation*}
where 
$$s(x,y)=min\{n\geq0:F^{n}(x) \mbox{ and } F^{n}(y) \mbox{ lie in distinct elements of } \mathcal{P}_{0} \}.$$
\item \label{def:WGM:Long} Long branches: there exists $ \delta_{0}>0 $ such that $ m(F(\omega))\geq \delta_{0} $, for all $ \omega\in \mathcal{P}_{0}$.
\end{enumerate}
\medskip
\end{definition}
The term \emph{weak} in the above definition refers that in W\ref{def:WGM:Markov}) we do not require \textit{full branch} (that is, $F(\omega)=\Delta_0$ for all $\omega\in\mathcal P_0$). In the case of full branch, $F$ is called a \textit{Gibbs Markov} map. This terminology was introduced in \cite{Alves Book}.
\medskip

Consider a measure space $(M,\mathcal B,\m)$, a measurable map $f: M\rightarrow M$ and $\Delta_{0}\subset M$ with $m(\Delta_{0})>0$. For simplicity we denote the restriction of $\m$ to $\Delta_0$ also by $\m$. We say that $ F:\Delta_{0}\rightarrow \Delta_{0} $ is an \emph{induced map for $f$} if there exists
a countable ($\m$ mod 0) partition $\mathcal{P}_{0}$ of $\Delta_{0}$ and a measurable function $R:\Delta_{0}\rightarrow \mathbb{N}$, constant on each elements of $\mathcal{P}_{0}$, such that 
$$F|_{\omega}=f^{R(\omega)}|_{\omega}$$ 
We formally denote the induced map $F$ by $f^{R}$.
We say that an induced map $f^R\colon\Delta_{0}\rightarrow\Delta_{0}$ is  \emph{aperiodic} if for all  $\omega_{1}, \omega_{2}\in\mathcal{P}_{0}$ there exists  $k_{0}\in \mathbb{N}$ such that  for all  $n\geq k_{0}$,
\begin{equation*}
m(\omega_{1}\cap (f^R)^{-n}(\omega_{2}))>0.
\end{equation*}
We say that an induced map $f^R$ has a \emph{coprime block} if there exist $N\geq2$ and $ \omega_{1},\omega_{2},...,\omega_{N}\in\mathcal{P}_0 $
 such that $\gcd\{R(\omega_i)\}_i=1$ and for all $i=1,\ldots,N$,
 \begin{equation*}
 f^{R}(\omega_{i})\supseteq\omega_{1}\cup\omega_{2}\cup\cdots\cup\omega_{N}\,\,\,(\m\text{ mod } 0).
 \end{equation*}

Assume  that $M$ is a metric space with metric $d$. 
 We say that an induced map $f^{R}$ is \emph{expanding} if there are $C>0$ and $0 < \beta < 1$ such that, for all $\omega \in \mathcal{P}_0$ and $x,y \in \omega$
\begin{itemize}
	\item[i)] $d(f^{R}(x), f^{R}(y)) \leq C \beta^{s(x,y)}$,
	\item[ii)] $d(f^{j}(x), f^{j}(y)) \leq Cd(f^{R}(x), f^{R}(y)),$ for all $0 \leq j \leq R.$
\end{itemize}
For given
$\eta>0$, a function $\varphi\colon M \rightarrow \mathbb{R}$ is \emph{$\eta$-H\"{o}lder continuous} if
 \begin{equation*}
 	|\varphi|_{\eta} = \sup_{x \neq y} \frac{|\varphi(x) - \varphi(y)|}{d(x,y)^{\eta}} < \infty.
 \end{equation*}
 We set
 \begin{equation*}
 	\mathcal{H}_{\eta} = \{ \varphi \colon M \rightarrow \mathbb{R} \colon \varphi \text{ is }  \eta\text{-H\"{o}lder continuous}\}.
 \end{equation*}

The correlation sequence of two observable functions $\varphi \in \mathcal{H}$, for some Banach space $\mathcal{H}\subseteq L^\infty(M,\m)$, and $\psi \in L^\infty(M,\m)$, with respect to an $f$-invariant probability $\mu$, is defined for all $n\in\mathbb N$ by
 \begin{equation*}
 	\Cor_{\mu}(\varphi, \psi \circ f^{n})=\left|\int\varphi(\psi\circ f^{n}) d\mu-\int\varphi d\mu\int\psi d\mu\right|.
 \end{equation*}

\begin{maintheorem}\label{teo:main}
Consider a measure space $(M,\mathcal B,\m)$, endowed with some metric, and a measurable map $f: M\rightarrow M$ satisfying $f_{*}m \ll m$. Let $f^{R}: \Delta_{0} \rightarrow \Delta_{0}$ be an aperiodic induced WGM expanding map with a coprime block and $R \in L^{1}(\m)$. Then there exists a unique ergodic $f$-invariant probability measure  $\mu$ such that $\mu \ll m$ and $\mu(\Delta_{0})>0$. Moreover,
\begin{enumerate}
\item if $m\{ R >n \} \leq C n^{-a}$ for some $C>0$ and $a >1$, then for all $\varphi \in \mathcal{H}_{\eta}$ and $\psi \in L^\infty(M,\m),$ there is some $C^{'} > 0$ such that
$$
\Cor_{\mu}(\varphi, \psi \circ f^{n}) \leq C^{'} n^{-a +1};
$$
\item if $m\{ R >n \} \leq C e^{-cn^{a}}$ for some $C,c>0$ and $0<a\leq 1$, given $\eta>0$ there is $c'>0$ such that, for all $\varphi \in \mathcal{H}_{\eta}$ and $\psi \in L^\infty(M,\m),$ there is some $C{'} > 0$ such that
$$
\Cor_{\mu}(\varphi, \psi \circ f^{n}) \leq C^{'} e^{-c{'}n^{a}}.
$$
\end{enumerate}
\end{maintheorem}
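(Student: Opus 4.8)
The plan is to lift the system to a Young tower over the inducing domain $\Delta_0$, establish mixing and the quantitative decay estimates on that tower by a coupling argument adapted to the absence of the full branch property, and then transfer the conclusions to $(M,f)$ through the canonical semiconjugacy. First I would form the tower $\Delta=\{(x,\ell)\colon x\in\Delta_0,\ 0\le\ell<R(x)\}$ with tower map $F_\Delta(x,\ell)=(x,\ell+1)$ for $\ell+1<R(x)$ and $F_\Delta(x,R(x)-1)=(f^R(x),0)$, together with the projection $\pi\colon\Delta\to M$, $\pi(x,\ell)=f^\ell(x)$, which satisfies $\pi\circ F_\Delta=f\circ\pi$ and restricts to the identity on $\Delta_0\times\{0\}$. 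Since $f^R$ is an aperiodic (hence irreducible) WGM map, Theorem~\ref{ergodic measure for irreducble WGM} supplies a unique ergodic $f^R$-invariant probability $\nu_0\ll\m$ on $\Delta_0$, whose density is bounded above; as $R\in L^1(\m)$, this gives $R\in L^1(\nu_0)$, so lifting $\nu_0$ level by level and normalizing by $\int R\,d\nu_0$ produces an $F_\Delta$-invariant probability $\hat\mu$ on $\Delta$. Then $\mu:=\pi_*\hat\mu$ is $f$-invariant, satisfies $\mu(\Delta_0)>0$ (because $\hat\mu(\Delta_0\times\{0\})>0$ and $\pi$ fixes the base), and $\mu\ll\m$ since on the $\ell$-th level $\pi=f^\ell$, $\nu_0\ll\m$ and $f_*\m\ll\m$.

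\textbf{Mixing, ergodicity, uniqueness.} Next I would invoke the results of Section~\ref{Mixing measure for TM}: the coprime block together with aperiodicity are exactly the hypotheses that make $(\Delta,F_\Delta,\hat\mu)$ mixing --- recall that $\gcd\{R\}=1$ alone does not suffice in the weak setting. Mixing of $\hat\mu$ gives its ergodicity, hence that of $\mu=\pi_*\hat\mu$ (an $f$-invariant set pulls back to an $F_\Delta$-invariant set). For uniqueness, an ergodic $f$-invariant $\mu'\ll\m$ with $\mu'(\Delta_0)>0$ induces, through the return structure to $\Delta_0$, an absolutely continuous $f^R$-invariant probability there, which by the uniqueness in Theorem~\ref{ergodic measure for irreducble WGM} must be $\nu_0$; unwinding the inducing correspondence then forces $\mu'=\mu$.

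\textbf{Decay of correlations on the tower.} This is the core of the argument, carried out in Section~\ref{se:dc on tower} along L.-S. Young's coupling scheme. One works with two copies of the tower, defines a nondecreasing sequence of stopping times recording the successive returns to the base $\Delta_0$, and at each return couples the definite proportion of mass that lands over a common partition element; the Gibbs property of $f^R$ transfers to bounded distortion of the return map and yields exponential contraction of the coupled mass in the symbolic metric, while the tail $\m\{R>n\}$ bounds the mass not yet returned. Two ingredients are genuinely new compared with the full branch case. First, the constant $n_0$ entering the definition of the stopping times must be extracted from the long branch property W\ref{def:WGM:Long}) rather than from full branch, in such a way that the analogues of~\cite[Lemmas 1 and 2]{Y99} still hold. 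Second, the symmetry of~\cite[(1)]{Y99} is no longer available, so the term $I_2$ in Proposition~\ref{first Matching Proposition} does not vanish and has to be estimated directly, again by the tail of $R$. Summing the contributions one obtains a bound of the form $\Cor_{\hat\mu}(\varphi,\psi\circ F_\Delta^n)\le C(\m\{R>n\}+\theta^n)$ for some $\theta\in(0,1)$, valid for $\varphi$ Hölder along the tower and $\psi\in L^\infty$; this becomes $C'n^{-a+1}$ under $\m\{R>n\}\le Cn^{-a}$ and $C'e^{-c'n^a}$ under $\m\{R>n\}\le Ce^{-cn^a}$.

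\textbf{Back to the original system, and the main obstacle.} Finally, given $\varphi\in\mathcal H_\eta$ and $\psi\in L^\infty(M,\m)$, the expanding hypothesis --- in particular $d(f^jx,f^jy)\le C\,d(f^Rx,f^Ry)\le C'\beta^{s(x,y)}$ for $0\le j\le R$ --- shows that $\varphi\circ\pi$ is bounded and Hölder with respect to the symbolic metric on $\Delta$, while $\psi\circ\pi\in L^\infty(\hat\mu)$; since $\pi$ is measure preserving and $\pi\circ F_\Delta^n=f^n\circ\pi$, we have $\Cor_\mu(\varphi,\psi\circ f^n)=\Cor_{\hat\mu}(\varphi\circ\pi,(\psi\circ\pi)\circ F_\Delta^n)$ and the previous step yields the stated rates. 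I expect the main obstacle to be exactly the coupling argument of the third step: reproducing Young's matching lemmas without full branch --- the delicate choice of $n_0$ from W\ref{def:WGM:Long}) and the control of the additional, non-vanishing term $I_2$ --- while keeping the bookkeeping sharp enough that the polynomial rate loses only the single expected power (so $n^{-a+1}$ rather than $n^{-a}$).
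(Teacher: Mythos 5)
Your proposal follows essentially the same route as the paper: build the tower $\Delta$ with the semiconjugacy $\pi$, obtain $\nu_0$ from Theorem~\ref{ergodic measure for irreducble WGM} and lift it to the $T$-invariant mixing measure $\nu$ (using aperiodicity and the coprime block as in Section~\ref{Mixing measure for TM}), run Young's coupling with the stopping times anchored by $n_0$ from the long branch condition W\ref{def:WGM:Long}) and keep track of the extra non-vanishing term $I_2$ in Proposition~\ref{first Matching Proposition}, then lift observables through Lemma~\ref{lemma: phi circ pi} and push back via $\pi_*\nu=\mu$. The one small imprecision is the intermediate bound $C(\m\{R>n\}+\theta^n)$: the tail that actually governs the estimate is that of the tower return time $\hat R$, where $\m\{\hat R>n\}=\sum_{\ell>n}\m\{R>\ell\}\lesssim n^{-a+1}$, which is exactly why the final rate loses one power; your stated conclusion $C'n^{-a+1}$ is correct, but it does not follow from the displayed intermediate bound as written.
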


The proof of Theorem~\ref{teo:main} is given in Section~\ref{se:dc on tower}. The strategy consists into reduce the estimates on the decay of correlations to similar ones for a tower map.
We remark that the coprime block is used to get $ \mu $ to be mixing. We will explore the significance of the coprime block in Section \ref{Mixing measure for TM}.
\medskip

Let $ \mu $ be an ergodic $ f$-invariant probability measure. We say that an observable $ \varphi: M \rightarrow \mathbb{R} $ with $ \int\varphi d\mu=0 $ satisfies the \textit{Central Limit Theorem (CLT)} if $ \frac{1}{\sqrt{n}}\sum_{i=0}^{n-1} \varphi \circ f^{i} $ converges in law (or in distribution) to a normal distribution $ \mathcal{N}(0,\sigma)$, for some $ \sigma>0.$ We may also consider observables of non-zero expectation by replacing $ \varphi $ with $ \varphi-\int \varphi d\mu$. In this situation an observable $ \varphi $ satisfies the CLT if there exists $ \sigma>0 $ such that for every interval $ J\subset \mathbb{R} $,
\begin{equation*}
	\mu \left\{ x : \frac{1}{\sqrt{n}}\sum_{i=0}^{n-1}\left(\varphi( f^{i}(x))- \int \varphi d\mu \right)\in J \right\} \rightarrow \frac{1}{\sigma \sqrt{2\pi}}\int_{J}e^{-\frac{t^2}{2\sigma^2}}dt, \text{ as } n\rightarrow \infty.
\end{equation*}
A function $ \varphi $ is \textit{coboundary} if there exists a measurable function $ g $   such that $ \varphi\circ f=g\circ f-g$.  
\begin{maincorollary}[Central Limit Theorem]\label{CLT Main Cror}
	Consider a measure space $(M,\mathcal B,\m)$, endowed with some metric, and a measurable map $f: M\rightarrow M$ satisfying $f_{*}m \ll m$. Let $f^{R}: \Delta_{0} \rightarrow \Delta_{0}$ be an aperiodic induced WGM expanding map with a coprime block such that $R \in L^{1}(\m)$, and let $\mu$ be the unique ergodic $f$-invariant probability measure  $\mu$ such that $\mu \ll m$ and $\mu(\Delta_{0})>0$. 
		If $m\{ R >n \} \leq C n^{-a}$ for some $C>0$ and $a >2$, then the CLT is satisfied for all $\varphi \in \mathcal{H}_{\eta}$ if and only if $\varphi$ is not coboundary.
\end{maincorollary}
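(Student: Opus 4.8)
The plan is to transfer the problem to the Young tower associated with $f^{R}$, prove the CLT there by a martingale approximation, and then carry it back to $(M,f,\mu)$ through the semi-conjugacy. Throughout, write $(\Delta,\hat f,\hat\mu)$ for the tower system produced in Section~\ref{se:dc on tower} (with $\hat\mu$ its \emph{mixing} invariant probability, obtained thanks to the coprime block, cf. Section~\ref{Mixing measure for TM}) and $\pi\colon\Delta\to M$ for the semi-conjugacy, so that $\pi\circ\hat f=f\circ\pi$ and $\pi_{*}\hat\mu=\mu$. Two elementary consequences of the hypothesis $m\{R>n\}\le Cn^{-a}$ with $a>2$ are used: first, $R\in L^{2}(m)$, since $\int R^{2}\,dm$ is comparable to $\sum_{n} n\,m\{R>n\}\le C\sum_{n} n^{1-a}<\infty$; second, the decay-of-correlations estimate from Theorem~\ref{teo:main}(1), $\Cor_{\mu}(\varphi,\psi\circ f^{n})\le C' n^{-a+1}$, is summable because $a-1>1$. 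These are precisely what a Gordin-type argument on the tower requires.

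Fix $\varphi\in\mathcal H_{\eta}$ and, without loss of generality, assume $\int\varphi\,d\mu=0$. Its lift $\bar\varphi:=\varphi\circ\pi$ is \emph{dynamically Lipschitz} on $\Delta$ (locally Lipschitz for the symbolic metric): comparing two points of $\Delta$ on the same level with separation time $s$ and using that $f^{R}$ is expanding together with the Hölder continuity of $\varphi$ gives $|\bar\varphi(p)-\bar\varphi(q)|\le C\theta^{\,s}$ for some $\theta\in(0,1)$ depending on $\eta$. Let $P$ be the transfer operator of $\hat f$ with respect to $\hat\mu$ and set $u:=\sum_{n\ge1}P^{n}\bar\varphi$. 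This series converges in $L^{2}(\hat\mu)$: the coupling argument behind Theorem~\ref{teo:main} controls $P^{n}\bar\varphi-\bigl(\int\bar\varphi\,d\hat\mu\bigr)$ up to an error of order $n^{-a+1}$ in a norm dominating $\|\cdot\|_{L^{2}(\hat\mu)}$ (the ``tail part'' of the error being absorbed by $R\in L^{2}$), and $\sum_{n} n^{-a+1}<\infty$. Put $\chi:=\bar\varphi+u-u\circ\hat f$; then $P\chi=0$, so $(\chi\circ\hat f^{\,k})_{k\ge0}$ is a reverse martingale difference sequence for the decreasing filtration $(\hat f^{-k}\mathcal B)_{k}$. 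Since $\sum_{i=0}^{n-1}\bar\varphi\circ\hat f^{\,i}=\sum_{i=0}^{n-1}\chi\circ\hat f^{\,i}+u\circ\hat f^{\,n}-u$ and $n^{-1/2}(u\circ\hat f^{\,n}-u)\to0$ in $L^{2}(\hat\mu)$, the reverse-martingale CLT (as in \cite{Y98,Y99}, using the ergodicity of $\hat\mu$ for the conditional variances) yields the CLT on $\Delta$ for $\bar\varphi$, with variance $\sigma^{2}=\int\chi^{2}\,d\hat\mu$, which by telescoping equals the Green--Kubo sum $\int\bar\varphi^{2}\,d\hat\mu+2\sum_{n\ge1}\int\bar\varphi\,(\bar\varphi\circ\hat f^{\,n})\,d\hat\mu$.

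Because $\pi_{*}\hat\mu=\mu$ and $\pi$ intertwines $\hat f$ and $f$, the CLT on $\Delta$ for $\bar\varphi$ is exactly the CLT on $M$ for $\varphi$, with the same $\sigma^{2}=\int\varphi^{2}\,d\mu+2\sum_{n\ge1}\int\varphi\,(\varphi\circ f^{n})\,d\mu$ (the series converging by summability of correlations). It remains to settle the dichotomy $\sigma^{2}>0\iff\varphi$ is not a coboundary. If $\varphi$ is a coboundary then, by telescoping, the Birkhoff sums $S_{n}\varphi=\sum_{i=0}^{n-1}\varphi\circ f^{i}$ are bounded in $L^{2}(\mu)$, hence $n^{-1/2}S_{n}\varphi\to0$ in probability, the limit law is degenerate, and $\sigma=0$. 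Conversely, if $\sigma^{2}=0$ then $\chi=0$ $\hat\mu$-a.e., i.e. $\bar\varphi=u\circ\hat f-u$ is an $L^{2}(\hat\mu)$-coboundary on the tower; one then shows that $u$ factors through $\pi$ ($\hat\mu$ mod $0$), that is $u=g\circ\pi$ for some $g\in L^{2}(\mu)$, whence $\varphi=g\circ f-g$ $\mu$-a.e., so $\varphi$ is a coboundary. Since the CLT as defined requires a non-degenerate Gaussian limit, $\varphi$ satisfies it precisely when $\sigma^{2}>0$, which by the above holds precisely when $\varphi$ is not a coboundary; this is the asserted equivalence.

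The main obstacle is the last descent step in the non-full-branch setting: verifying that an $L^{2}$-coboundary $u$ on the tower factors through $\pi$ (equivalently, is $\pi$-measurable mod $0$). In the classical Young-tower case this follows from the mixing/exactness of the base together with the structure of $\pi$; here one must re-examine that argument using the separability, Gibbs (bounded-distortion) and long-branch properties of the WGM map --- the same ingredients already exploited in Section~\ref{se:dc on tower} to build the mixing tower and to run the coupling. A secondary, routine, point is the $L^{2}(\hat\mu)$-convergence of the Gordin series: this amounts to upgrading the norm in the decay estimate of Theorem~\ref{teo:main} from its natural weighted/$L^{1}$ form to $L^{2}(\hat\mu)$, which is harmless because the coupling error splits into an exponentially small ``coupled'' term and a ``tail'' term of size $m\{R>n\}$, and $a>2$ makes both square-summable.
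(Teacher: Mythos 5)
Your route (Gordin martingale coboundary decomposition plus the reverse-martingale CLT) is genuinely different from the paper's, which instead invokes a version of Liverani's CLT criterion \cite{Liverane CLT theorem}: that criterion only asks for (i) summable autocorrelations and (ii) \emph{a.e.\ absolute} convergence of the series $\sum_{n}\hat T^{*n}\varphi$, and the paper verifies (ii) by bounding $\int\lvert\hat P^{n}(\phi\rho)\rvert\,dm$ by $Cn^{-a+1}$ via the matching arguments of Section~\ref{matching section} and invoking Tonelli. This difference in approach is exactly where your argument has a gap.

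Your Gordin step requires $u=\sum_{n\ge 1}P^{n}\bar\varphi$ to converge in $L^{2}(\hat\mu)$, but you do not justify the claim that the coupling gives $\lVert P^{n}\bar\varphi\rVert_{L^{2}(\hat\mu)}\le Cn^{-a+1}$. The coupling in Section~\ref{se:dc on tower} produces total-variation, hence $L^{1}$, control: $\lVert P^{n}\bar\varphi\rVert_{L^{1}(\hat\mu)}\le Cn^{-a+1}$. The cheap upgrade $\lVert P^{n}\bar\varphi\rVert_{L^{2}}^{2}\le\lVert P^{n}\bar\varphi\rVert_{\infty}\lVert P^{n}\bar\varphi\rVert_{L^{1}}\le\lVert\bar\varphi\rVert_{\infty}\,Cn^{-a+1}$ only yields $\lVert P^{n}\bar\varphi\rVert_{L^{2}}\lesssim n^{-(a-1)/2}$, which is summable only when $a>3$, not under the stated hypothesis $a>2$. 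So the $L^{2}$ convergence of the Gordin series is not established in the regime of the theorem, and the martingale decomposition you build on it is not yet available. The paper's choice of Liverani's criterion sidesteps precisely this: it needs only a.e.\ convergence of $\sum_{n}\hat T^{*n}\phi$, which does follow from the $L^{1}$ summability $\sum_{n}n^{-a+1}<\infty$ available for $a>2$. (You also flag, correctly, that the descent of the coboundary relation from the tower to $M$ needs an argument; the paper states the equivalence ``$\varphi$ coboundary $\iff$ $\varphi\circ\pi$ coboundary'' without proof, and one direction is immediate, but the converse --- showing that the transfer function factors through $\pi$ --- is indeed the nontrivial part and should not be left as a remark.) To repair your proof you would either need to strengthen the hypothesis to $a>3$, or establish a genuinely stronger ($L^{2}$ or pointwise) decay for $P^{n}\bar\varphi$, or switch to a CLT criterion that, like Liverani's, is content with a.e.\ summability.
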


Given  $ \epsilon>0 $ we define the \textit{large deviation at time $ n $} of the time average of an observable $ \varphi\colon M\to\mathbb R $ from the spatial average as
\begin{equation*}
LD_{\mu}(\varphi, \epsilon, n):	=\mu\left(\left|\frac{1}{n}\sum_{i=0}^{n-1} \varphi\circ f^i - \int \varphi d\mu\right|> \epsilon \right).
\end{equation*} 
Birkhoff's ergodic theorem guarantees that time averages converges to the spatial average $\mu$-a.e., and so $ LD_{\mu}(\varphi, \epsilon, n) \rightarrow 0 $, as $ n\rightarrow \infty.$  We are interested in the rate of this decay. Combining Theorem~\ref{teo:main} and \cite{LD with improved polynomial rate, statistical properties implies full return GM}, we have:
\begin{maincorollary}[Large Deviations]\label{corol:LD}
	Consider a measure space $(M,\mathcal B,\m)$, endowed with some metric, and a measurable map $f: M\rightarrow M$ satisfying $f_{*}m \ll m$. Let $f^{R}: \Delta_{0} \rightarrow \Delta_{0}$ be an aperiodic induced WGM expanding map with a coprime block such that $R \in L^{1}(\m)$, and let $\mu$ be the unique ergodic $f$-invariant probability measure  $\mu$ such that $\mu \ll m$ and $\mu(\Delta_{0})>0$. Then,
	\begin{enumerate}
		\item If $m\{ R >n \} \leq C n^{-a}$ for some $C>0$ and $a >1$, then for all $ \epsilon>0 $ and  $\varphi \in \mathcal{H}_{\eta}$ there is some $C^{'}=C^{'}(\epsilon, \varphi) > 0$ such that
		$$
		LD_{\mu}(\varphi, \epsilon, n) \leq C^{'} n^{-a +1}.
		$$
		\item If $m\{ R >n \} \leq C e^{-cn^{a}}$ for some $C,c>0$ and $0<a\leq 1$, then for all $ \epsilon>0 $ and $\varphi \in \mathcal{H}_{\eta}$ there is some $C{'}=C^{'}(\epsilon, \varphi)> 0$ and $c'=c'(c, \varphi, \epsilon, \eta) $ such that
		$$
		LD_{\mu}(\varphi, \epsilon, n) \leq C^{'} e^{-c{'}n^{\frac{a}{a+2}}}.
		$$
	\end{enumerate}
\end{maincorollary}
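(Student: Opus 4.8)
The plan is to derive Corollary~\ref{corol:LD} from the abstract large deviation estimates available for systems modeled by a Young tower, transporting them to $(M,f,\mu)$ through the semi-conjugacy already produced in the proof of Theorem~\ref{teo:main}. First I would recall the objects constructed in Section~\ref{se:dc on tower}: the tower $\Delta$ over $\Delta_0$ with tower map $F_\Delta\colon\Delta\to\Delta$ and return time $R$, the ergodic $F_\Delta$-invariant probability $\tilde\mu$ with $\tilde\mu\ll\tilde m$, where $\tilde m$ is the reference measure on $\Delta$, and the semi-conjugacy $\pi\colon\Delta\to M$ with $\pi\circ F_\Delta=f\circ\pi$ and $\pi_*\tilde\mu=\mu$. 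What matters here is the explicit identification of $m\{R>n\}$ with a constant multiple of the $\tilde m$-measure of the part of $\Delta$ above level $n$; thus the hypothesis $m\{R>n\}\le Cn^{-a}$ (resp. $m\{R>n\}\le Ce^{-cn^{a}}$) is exactly the tower-tail hypothesis under which \cite{LD with improved polynomial rate} establishes its large deviation bounds.

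Second, I would apply those bounds on $\Delta$. For $\varphi\in\mathcal H_{\eta}$ the lift $\varphi\circ\pi$ is, up to an additive constant, locally Hölder with respect to the symbolic metric $\beta^{s(\cdot,\cdot)}$ on $\Delta$ --- the same regularity that the expanding property of $f^{R}$ produces for the decay of correlations --- so it belongs to the class of observables covered by the cited reference. In the polynomial regime $a>1$ this yields $LD_{\tilde\mu}(\varphi\circ\pi,\epsilon,n)\le C'n^{-a+1}$; in the regime $m\{R>n\}\le Ce^{-cn^{a}}$ with $0<a\le 1$ the standard truncation argument --- split according to whether some return time before time $n$ exceeds a level $L$, bound the bad part by the tower tail $e^{-cL^{a}}$, apply an exponential Chebyshev estimate (or the exponential decay of correlations of Theorem~\ref{teo:main}(2)) to the good part, and optimize in $L$ --- produces the degraded stretched-exponential rate $e^{-c'n^{a/(a+2)}}$, with $c'$ depending on $c,\varphi,\epsilon,\eta$. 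Finally one transports back: since $\int\varphi\,d\mu=\int\varphi\circ\pi\,d\tilde\mu$ and $\sum_{i=0}^{n-1}\varphi\circ f^{i}\circ\pi=\sum_{i=0}^{n-1}(\varphi\circ\pi)\circ F_\Delta^{i}$ holds $\tilde\mu$-a.e., the event defining $LD_{\mu}(\varphi,\epsilon,n)$ pulls back to the corresponding event on $\Delta$ of equal $\tilde\mu$-measure, so the tower estimates descend to $(M,f,\mu)$ exactly as at the end of the proofs of Theorem~\ref{teo:main} and Corollary~\ref{CLT Main Cror}.

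The point that needs care is that the large deviation results in the literature are typically stated for full-branch Young towers and exploit the bounded-distortion (Gibbs) property, whereas our tower comes from a weak Gibbs Markov map and is not full branch; this is the obstruction flagged in item iv) of the introduction, where the failure of the symmetry of \cite[(1)]{Y99} forces the extra term $I_{2}$ of Proposition~\ref{first Matching Proposition} not to vanish. I see two ways around it. The conservative route is to invoke \cite{statistical properties implies full return GM}: the statistical properties already established for $f$ --- in particular the existence of the measure $\mu$ of Theorem~\ref{teo:main} together with the summable decay of correlations of part (1) --- force the existence of a genuine full-branch induced Gibbs Markov map for $f$ with a return-time tail compatible with the bounds of the corollary, and then \cite{LD with improved polynomial rate} can be quoted verbatim for that structure. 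The less conservative route is to observe that the large deviation proof uses only decay-of-correlations and $L^{1}$-integrability input that Theorem~\ref{teo:main} already supplies, so that the coupling scheme of Section~\ref{se:dc on tower} --- designed precisely to handle the non-vanishing $I_{2}$ --- can be re-run while tracking the large-deviation quantities.

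I expect this passage from the weak to the full-branch setting (equivalently, the direct re-derivation of the bounds inside the WGM tower) to be the only genuinely delicate point: once the full-branch large deviation theorems may be cited, the remainder --- matching the tower tail with $m\{R>n\}$ and pushing the estimate through $\pi$ --- is routine. The stretched exponent $a/(a+2)$ in case (2) is the familiar outcome of balancing the truncation cost against $e^{-cL^{a}}$ and is not expected to be improvable at this level of generality.
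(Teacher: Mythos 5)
Your proposal is correct and, via what you call the ``conservative route,'' arrives at exactly the paper's argument: the paper offers no separate proof of Corollary~\ref{corol:LD} beyond the sentence preceding it, which simply combines the decay-of-correlations estimate of Theorem~\ref{teo:main} with \cite{statistical properties implies full return GM} (which converts a summable/stretched-exponential mixing rate into a genuine full-branch induced Gibbs Markov map with a compatible return-time tail) and then invokes Melbourne's large-deviations result \cite{LD with improved polynomial rate} for the resulting full-branch Young tower. Your preliminary plan of applying tower-level LD estimates directly to the WGM tower and pushing them through~$\pi$ would indeed founder on the lack of full branches, as you correctly flag; the paper sidesteps this exactly as you describe in the conservative route.
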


\begin{example}
Let $M=S^1\times[0,1]$ and let $\m$ denote the Lebesgue measure on $M$.  We consider the map $f:M\rightarrow M$ introduced in~\cite{Gouezel skew product} defined by
$$
f(\theta, x)=\left(F(\theta), f_{\alpha(\theta)}(x)\right),
$$
where $F(\theta)=4\theta$,
$$
f_\alpha(\theta)(x)= \begin{cases}x\left(1+2^{\alpha(\theta)} x^{\alpha(\theta)}\right) & \text { if } 0 \leqslant x \leqslant \frac{1}{2} \\ 2 x-1 & \text { if } \frac{1}{2}<x \leqslant 1,\end{cases}
$$
and  $\alpha: S^1 \rightarrow(0, 1)$ is a $C^1$ map that has minimum $\alpha_{\min }$ and maximum $\alpha_{\max }$, with $\alpha_{\min }<\alpha_{\max }$. In \cite{Gouezel skew product} a partition $\mathcal P_0$ of $\Delta_0=S^1\times(\frac12,1]$ with a certain return time $R$ was given in such a way that $f^{R}$ is an aperiodic induced WGM map (no full branch property) with a coprime block. Moreover, we have
$	m\{R>n\}\leq {C}{n^{-a}}$
for some $C>0$, with $a=1/\alpha_{\max}$. Since $\alpha_{\max} <1 $ we have   $ R\in L^{1}(\m).$
By Theorem~\ref{teo:main} there exists a unique ergodic $f$-invariant probability measure  $\mu$ such that $\mu \ll m$ and for all $\varphi \in \mathcal{H}_{\eta}$ and $\psi \in L^\infty(M,\m)$, there is some $C^{'} > 0$ such that
$$
\Cor_{\mu}(\varphi, \psi \circ f^{n}) \leq C^{'} n^{-a +1}.
$$
By Corollary~\ref{CLT Main Cror}, if $\alpha_{max}<1/2$ (thus $a>2$) then CLT is satisfied for all $\varphi \in \mathcal{H}_{\eta}$ that is not coboundary, and by Corollary~\ref{corol:LD} we also have
$$		LD_{\mu}(\varphi, \epsilon, n) \leq C^{''} n^{-a +1}
		$$
for all $\varphi \in \mathcal{H}_{\eta}$ and some $C^{''}=C^{''}(\epsilon, \varphi) > 0$.

We notice that under additional hypothesis on $\alpha$ we can have better estimates on the return times (see \cite{Gouezel skew product}), hence in the decay of correlations, CLT and Large Deviations. 
It is worth to mention that a CLT for this map was already discussed in \cite{Gouezel skew product}. Moreover, a polynomial decay of correlations for observables supported on $\Delta_0$ was pointed out in~\cite{Gouezel thesis}.
\end{example}

\section{Invariant measures for WGM maps}\label{Properties of WGM}
In this section, for the convenience of the reader, we collect some results on WGM maps. For the missing proofs see \textit{e.g.} \cite{Alves Book}.

Given $0< \beta <1$, we define the following \emph{spaces of densities}:
\begin{equation*}
\mathcal{F}_{\beta}( \Delta _{0} ) = \{\varphi\colon\Delta_{0}\rightarrow \mathbb{R} \colon \exists C_\varphi > 0\, \text{ s.t. }
\, |\varphi(x)-\varphi(y)|\leq C_{\varphi}\beta^{s(x,y)}, \forall x,y \in \Delta_{0}\}
\end{equation*}
\begin{multline*}
	\mathcal{F}_{\beta}^{+}( \Delta _{0} ) = \{\varphi\in \mathcal{F}_{\beta}( \Delta _{0} ) \colon \exists C_{\varphi}' > 0 \, \text{ s.t. }\,\\ \varphi(x)>0 \text{ and } \left|\frac{\varphi(x)}{\varphi(y)}-1\right|\leq C_{\varphi}'\beta^{s(x,y)}, \forall x,y \in \omega\in \mathcal{P}_0 \}.
\end{multline*}
Given $\varphi \in \mathcal{F}_{\beta}(\Delta_{0})$, set 
\begin{equation*}
|\varphi|_{\beta}= \sup_{x \neq y}\frac{|\varphi(x)-\varphi(y)|}{\beta^{s(x,y)}}.
\end{equation*} 
Since $\mathcal{F}_{\beta}(\Delta_{0})\subset L^{\infty}(m) \subset L^{1}(m)$, we define a norm on $\mathcal{F}_{\beta}(\Delta_{0})$ as
\begin{equation*}
\|\varphi\|_{\beta} = |\varphi|_\beta + \|\varphi\|_\infty.
\end{equation*}
Set, for each $n\in \mathbb{N}$ and $\omega \in \mathcal{P}_{0}^n$,
$$ \rho_{n} = \frac{dF^{n}_{*}m}{dm}\,\,  \text{ and }\,\, \rho_{n,\omega} = \frac{dF^{n}_{*}(m|\omega)}{dm}.$$

\begin{lemma}\label{lem:WGMprop}
Let $F: \Delta_{0} \rightarrow \Delta_{0}$ be a WGM map. Then,
\begin{enumerate}[i)]
\item  there is some $\bar{C_{F}} >0$ such that, for all $n \geq 1$ and $x,y \in \omega \in \mathcal{P}_{0}^{n},$
\begin{equation*}
		\log\frac{J_{F^{n}}(x)}{J_{F^{n}}(y)} \leq \bar{C_{F}} \beta^{s(F^{n}(x), F^{n}(y))}. 
\end{equation*}
\item there is some $C> 0$ such that, for all $n \geq 1$, $\omega \in \mathcal{P}_{0}^{n}$ and measurable sets $A_{1}, A_{2} \subset \omega,$
\begin{equation*}
\frac{m(F^{n}(A_{1}))}{m(F^{n}(A_{2}))} \leq C \frac{m(A_{1})}{m(A_{2})}.
\end{equation*}\label{lem:WGMpropii}
\end{enumerate} 
 \end{lemma}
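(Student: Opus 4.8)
The plan is to obtain part (i) from the one-step Gibbs estimate W4) by telescoping along an orbit, and then to read part (ii) off from (i) via the change-of-variables formula encoded in W3).

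For (i), I would first record the chain rule for the Jacobian: iterating the nonsingularity identity W3) along the Markov structure of $\mathcal{P}_{0}$ gives, for every measurable $A\subset\omega\in\mathcal{P}_{0}^{n}$, that $m(F^{n}(A))=\int_{A}J_{F^{n}}\,dm$ with $J_{F^{n}}=\prod_{j=0}^{n-1}J_{F}\circ F^{j}$; the required injectivity of $F^{n}|_{\omega}$ is automatic since it is a composition of the injections $F|_{F^{j}(\omega)}$ on successive elements of $\mathcal{P}_{0}$. Taking logarithms,
\[
\log\frac{J_{F^{n}}(x)}{J_{F^{n}}(y)}=\sum_{j=0}^{n-1}\log\frac{J_{F}(F^{j}(x))}{J_{F}(F^{j}(y))}.
\]
The combinatorial heart is the behaviour of the separation time under iteration: if $x,y$ lie in a common $\omega\in\mathcal{P}_{0}^{n}$ then $F^{j}(x)$ and $F^{j}(y)$ sit in the same element of $\mathcal{P}_{0}$ for $0\le j\le n-1$, so $s(x,y)\ge n$ and $s(F^{k}(x),F^{k}(y))=s(x,y)-k$ for $0\le k\le n$; in particular $s(F^{j+1}(x),F^{j+1}(y))=s(F^{n}(x),F^{n}(y))+(n-1-j)$. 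Applying W4) to each summand and summing the resulting geometric series,
\[
\log\frac{J_{F^{n}}(x)}{J_{F^{n}}(y)}\le C_{F}\sum_{j=0}^{n-1}\beta^{\,s(F^{n}(x),F^{n}(y))+(n-1-j)}\le\frac{C_{F}}{1-\beta}\,\beta^{\,s(F^{n}(x),F^{n}(y))},
\]
which is (i) with $\bar{C_{F}}=C_{F}/(1-\beta)$.

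For (ii), part (i) together with $\beta<1$ and $s\ge 0$ gives the uniform distortion bound $\sup_{\omega}J_{F^{n}}\le e^{\bar{C_{F}}}\inf_{\omega}J_{F^{n}}$ on each $\omega\in\mathcal{P}_{0}^{n}$. Then, using the integral formula above and $\inf_{\omega}J_{F^{n}}\le m(F^{n}(A_{2}))/m(A_{2})$,
\[
m(F^{n}(A_{1}))=\int_{A_{1}}J_{F^{n}}\,dm\le\big(\sup_{\omega}J_{F^{n}}\big)\,m(A_{1})\le e^{\bar{C_{F}}}\big(\inf_{\omega}J_{F^{n}}\big)\,m(A_{1})\le e^{\bar{C_{F}}}\,\frac{m(F^{n}(A_{2}))}{m(A_{2})}\,m(A_{1}),
\]
and rearranging yields (ii) with $C=e^{\bar{C_{F}}}$.

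I do not expect a genuine obstacle here; the content is standard bounded-distortion bookkeeping. The two points that need care are the additivity $s(F^{k}(x),F^{k}(y))=s(x,y)-k$ for $k$ up to the depth of the cylinder (this is exactly what turns W4) into a summable geometric series), and the verification that iterating W3) along the Markov chain genuinely delivers both the injectivity of $F^{n}|_{\omega}$ and the product formula for $J_{F^{n}}$. Note that neither separability W2) nor the long-branch condition W5) enters this argument.
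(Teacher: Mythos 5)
Your proof is correct, and it is the standard bounded-distortion telescoping argument; the paper itself does not give a proof of this lemma but simply defers to \cite{Alves Book}, where essentially the same computation appears. The two points you flag as needing care are exactly the right ones: the chain rule $J_{F^n}=\prod_{j=0}^{n-1}J_F\circ F^j$ on a cylinder of $\mathcal P_0^n$, and the shift identity $s(F^k(x),F^k(y))=s(x,y)-k$ for $k\le n\le s(x,y)$, which is what makes W4) sum to a geometric series with ratio $\beta$. Your derivation of (ii) from the uniform distortion bound $\sup_\omega J_{F^n}\le e^{\bar C_F}\inf_\omega J_{F^n}$ is also the usual route. You are right that neither W2) nor W5) is used here.
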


\begin{proposition}\label{RP1}
If $F: \Delta_{0}\rightarrow \Delta_{0}$ is a WGM map, then $F$ has an invariant probability measure $\nu_{0} \ll m$ with $d\nu_{0}/dm \in \mathcal{F}_{\beta}(\Delta_{0})$.
\end{proposition}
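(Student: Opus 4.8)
\textit{Proof plan.} I would carry out a Krylov--Bogolyubov-type argument at the level of densities, i.e.\ applied to the transfer operator of $F$; the point is that the long branch hypothesis W\ref{def:WGM:Long}) is exactly what replaces the full branch property in keeping the relevant estimates uniform in the number of iterates.

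\emph{Step 1: the transfer operator.} By nonsingularity W3) one defines $\mathcal{L}\varphi=dF_*(\varphi\,m)/dm$ on $L^1(m)$; it is positive, satisfies $\int\mathcal{L}\varphi\,dm=\int\varphi\,dm$ for $\varphi\ge 0$ (hence is an $L^1$-contraction), and has the explicit expression
\[
\mathcal{L}\varphi(x)=\sum_{\omega\in\mathcal{P}_0:\;x\in F(\omega)}\frac{\varphi\big((F|_\omega)^{-1}x\big)}{J_F\big((F|_\omega)^{-1}x\big)}.
\]
Iterating gives $\mathcal{L}^n\varphi=dF^n_*(\varphi\,m)/dm$; in particular $\mathcal{L}^n\mathbf 1=\rho_n=\sum_{\omega\in\mathcal{P}_0^n}\rho_{n,\omega}$, and a nonnegative $h$ with $\mathcal{L}h=h$ and $\int h\,dm>0$ produces, after normalisation, an $F$-invariant measure $\nu_0\ll m$. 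So it suffices to find such an $h$ inside $\mathcal{F}_\beta(\Delta_0)$.

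\emph{Step 2: uniform bounds on $\rho_n$.} From the Markov property W\ref{def:WGM:Markov}) one checks that for $\omega=\omega_0\cap F^{-1}\omega_1\cap\dots\cap F^{-(n-1)}\omega_{n-1}\in\mathcal{P}_0^n$ one has $F(\omega)\in\mathcal{P}_0^{n-1}$, hence inductively $F^n(\omega)=F(\omega_{n-1})$; therefore W\ref{def:WGM:Long}) gives $m(F^n(\omega))\ge\delta_0$ for every $n$ and $\omega\in\mathcal{P}_0^n$. Since $\int\rho_{n,\omega}\,dm=m(\omega)$ and $\rho_{n,\omega}(x)=\mathbf 1_{F^n(\omega)}(x)\big/J_{F^n}\big((F^n|_\omega)^{-1}x\big)$, the bounded distortion of $J_{F^n}$ from Lemma~\ref{lem:WGMprop}(i) yields on $F^n(\omega)$ that $\|\rho_{n,\omega}\|_\infty\le(e^{\bar C_F}/\delta_0)\,m(\omega)$ and $|\rho_{n,\omega}(x)-\rho_{n,\omega}(y)|\le C_0\,m(\omega)\,\beta^{s(x,y)}$ for a constant $C_0$ independent of $n$ and $\omega$. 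Summing over the $\omega\in\mathcal{P}_0^n$ that meet a given point (respectively a given pair of points), which are pairwise disjoint subsets of $\Delta_0$ and so contribute total $m$-measure at most $m(\Delta_0)$ (respectively $2m(\Delta_0)$), gives
\[
\sup_n\|\rho_n\|_\infty\le\frac{e^{\bar C_F}}{\delta_0}\,m(\Delta_0),\qquad\sup_n|\rho_n|_\beta\le 2C_0\,m(\Delta_0),
\]
so $M:=\sup_n\|\rho_n\|_\beta<\infty$. This is the only place where weakness is felt: in the Gibbs--Markov case $F^n(\omega)=\Delta_0$, whereas here W\ref{def:WGM:Long}) is what bounds $m(F^n(\omega))$ from below.

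\emph{Step 3: Krylov--Bogolyubov.} The set $\mathcal{K}=\{\varphi\in\mathcal{F}_\beta(\Delta_0):\varphi\ge 0,\ \|\varphi\|_\beta\le M,\ \int\varphi\,dm=m(\Delta_0)\}$ is convex and compact in $L^1(m)$: elements of a bounded $\|\cdot\|_\beta$-ball are uniformly bounded and oscillate by at most $M\beta^n$ on each element of $\mathcal{P}_0^n$, and since $(\mathcal{P}_0^n)_n$ generates $\mathcal{A}$ ($m$ mod $0$) by W2), a diagonal extraction over the countable family of partition pieces yields $L^1$-convergent subsequences whose limits still satisfy the constraints defining $\mathcal{K}$ (a standard fact for these spaces; see \cite{Alves Book}). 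The Cesàro averages $h_N=\frac1N\sum_{n=0}^{N-1}\rho_n$ lie in $\mathcal{K}$; picking an $L^1$-limit point $h\in\mathcal{K}$ along some $N_j\to\infty$ and using that $\mathcal{L}$ is an $L^1$-contraction, that $\mathcal{L}h_N=h_N+\frac1N(\rho_N-\rho_0)$, and that $\|\rho_N-\rho_0\|_1\le 2m(\Delta_0)$, one obtains $\mathcal{L}h=h$. Then $h\in\mathcal{F}_\beta(\Delta_0)$, $h\ge 0$, $\int h\,dm=m(\Delta_0)>0$, and $\nu_0:=(h/m(\Delta_0))\,m$ is the desired $F$-invariant probability with $d\nu_0/dm\in\mathcal{F}_\beta(\Delta_0)$. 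The main obstacle is Step 2, namely extracting uniform bounds without the full branch property; note that, in contrast with the Gibbs--Markov case, one should \emph{not} expect $h$ to be bounded away from zero, i.e.\ $h\in\mathcal{F}_\beta^+(\Delta_0)$ generally fails, in line with remark (i) of the Introduction.
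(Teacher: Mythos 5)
Your proof is correct and follows essentially the same route as the argument the paper defers to (\cite[Theorem 3.13]{Alves Book}): the Markov property reduces $F^n(\omega)$ for $\omega\in\mathcal{P}_0^n$ to $F(\omega_{n-1})$, so W\ref{def:WGM:Long}) supplies the lower bound $m(F^n(\omega))\ge\delta_0$ that in the full-branch case comes for free, and combined with the distortion estimate of Lemma~\ref{lem:WGMprop}(i) this yields a uniform bound on $\|\rho_n\|_\beta$; a compactness argument then produces an invariant density in $\mathcal{F}_\beta(\Delta_0)$. The only (minor) divergence is procedural: you pass through Cesàro averages and an $L^1$-limit point (Krylov--Bogolyubov), whereas the paper, per Remark~\ref{RD1}, takes the density as a pointwise limit of a subsequence of the $\rho_n$ themselves; both rest on exactly the same uniform estimates, and your Cesàro averaging has the small advantage of making the fixed-point identity $\mathcal{L}h=h$ immediate from $\mathcal{L}h_N-h_N=\frac1N(\rho_N-\rho_0)\to 0$ in $L^1$.
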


\begin{remark}\label{RD1}
The density ${d\nu_{0}}/{dm}$ given by Proposition \ref{RP1} can be obtained as the pointwise limit of a subsequence of $\rho_{n}$ (see for instance ~\cite[Theorem 3.13]{Alves Book} for more details). From the first item of Lemma~\ref{lem:WGMprop}, for each $ \omega \in \mathcal{P}_0 $, and $ x, y \in F(\omega) $  we have
\begin{equation*}
	\rho_{n}(x)\leq e^{\bar{C_{F}}\beta^{s(x,y)}}\rho_{n}(y) \text{ for every } n \geq 1,
\end{equation*}
which implies
\begin{equation}\label{RDE1}
\text{ either }\frac{d\nu_{0}}{dm}(x)=0\,\, \text{ or }\,\, \frac{d\nu_{0}}{dm}(x)>0 \text{ and } \left|\log\frac{\frac{d\nu_{0}}{dm}(x)}{\frac{d\nu_{0}}{dm}(y)}\right |\leq \bar{C_{F}}\beta^{s(x, y)}.
\end{equation}  
Note that  $\left|\log\frac{\frac{d\nu_{0}}{dm}(x)}{\frac{d\nu_{0}}{dm}(y)}\right |\leq \bar{C_{F}}\beta^{s(x, y)}$ implies that there exists $ C>0 $ such that
\begin{equation*}
	\left|\frac{\frac{d\nu_{0}}{dm}(x)}{\frac{d\nu_{0}}{dm}(y)}-1\right |\leq  C\beta^{s(x, y)}.
\end{equation*}
\end{remark}

We say that a WGM map $F$ is \emph{irreducible}, with respect to the given partition $\mathcal{P}_{0}$, if for all $\omega_{1}, \omega_{2} \in \mathcal{P}_{0}$, there exists $n\in \mathbb{N}$ such that $ m(\omega_{1}\cap F^{-n}(\omega_{2}))>0$.

\begin{theorem}\label{ergodic measure for irreducble WGM}
If $F: \Delta_{0}\rightarrow \Delta_{0}$ is an irreducible WGM map, then the $F$-invariant probability measure $ \nu_{0}\ll m$ is unique, ergodic, $ \frac{d\nu_{0}}{dm} \in \mathcal{F}_{\beta}^{+}(\Delta _{0})$ and there is $ C_{0} $ such that  $ 0<\frac{d\nu_{0}}{dm}\leq C_{0}.$
Moreover, if $F$ is aperiodic, then $\nu_{0}$ is exact.
\end{theorem}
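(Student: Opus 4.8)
The plan is to establish the four assertions — uniqueness, ergodicity, the positive lower and upper bounds on the density, and exactness under aperiodicity — in that order, leveraging Proposition~\ref{RP1} and Remark~\ref{RD1} as the starting point. First I would fix the invariant density $h=d\nu_0/dm\in\mathcal F_\beta(\Delta_0)$ produced by Proposition~\ref{RP1} and recalled as a pointwise limit of a subsequence of $\rho_n$. The crucial structural fact, already isolated in~\eqref{RDE1}, is the dichotomy: on each $\omega\in\mathcal P_0$ the density $h$ is either identically zero or strictly positive with controlled ratio $|\log(h(x)/h(y))|\le\bar C_F\beta^{s(x,y)}$. Let $U$ be the ($\m$ mod $0$) union of those $\omega\in\mathcal P_0$ on which $h>0$; then $U$ has positive measure (since $\int h\,dm=1$) and I claim $U=\Delta_0$ ($\m$ mod $0$). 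Indeed, $U$ is forward invariant up to measure zero because $\nu_0$ is $F$-invariant and $F$ is Markov; if some $\omega_2\not\subset U$, pick $\omega_1\subset U$, and irreducibility gives $n$ with $m(\omega_1\cap F^{-n}(\omega_2))>0$, so $F^n$ maps a positive-$m$ (hence positive-$\nu_0$) subset of $U$ into $\omega_2$, forcing $\nu_0(\omega_2)>0$, a contradiction. Once $U=\Delta_0$, the dichotomy upgrades to $h>0$ everywhere with the ratio bound holding on each partition element, i.e. $h\in\mathcal F_\beta^+(\Delta_0)$; the upper bound $h\le C_0$ is immediate since $h\in\mathcal F_\beta(\Delta_0)\subset L^\infty(m)$.

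Next, uniqueness and ergodicity. Suppose $\nu_0'\ll m$ is another $F$-invariant probability; by the same argument its density is strictly positive everywhere. Then $\nu_0$ and $\nu_0'$ are equivalent, and by a standard decomposition (e.g.\ considering $\min\{h,h'\}$ or the Lebesgue decomposition of $\nu_0'$ with respect to $\nu_0$) it suffices to show $\nu_0$ is ergodic: any invariant set $A$ with $0<\nu_0(A)<1$ would split $\nu_0$ into two mutually singular invariant measures, each absolutely continuous with respect to $m$, whose densities vanish on a positive-$m$ set — contradicting the dichotomy-plus-irreducibility argument above applied to $\nu_0|_A$ (its density, being an invariant density, again must be everywhere positive, so $A=\Delta_0$). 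Hence $\nu_0$ is the unique a.c.\ invariant probability and it is ergodic.

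Finally, exactness under aperiodicity. Here I would pass to the natural transfer-operator formulation: let $\mathcal L$ be the transfer operator of $F$ with respect to $m$, so $\mathcal L^n\mathbf 1_{\Delta_0}=\rho_n$. The standard route is to show that $\mathcal L^n(g/h)\cdot h\to\big(\int g\,dm\big)h$ in $L^1(m)$ for suitable $g$, equivalently that the tail $\sigma$-algebra $\bigcap_n F^{-n}\mathcal A$ is $\nu_0$-trivial. Using the Gibbs/distortion bound from Lemma~\ref{lem:WGMprop}(i) together with the bounded-distortion estimate Lemma~\ref{lem:WGMprop}(ii), one controls the oscillation of $\rho_{n,\omega}$ on images $F^n(\omega)$; aperiodicity ensures that for any two elements $\omega_1,\omega_2\in\mathcal P_0$ the cylinders eventually overlap under all large iterates, so the images $F^n(\omega)$, $\omega\in\mathcal P_0^n$, asymptotically cover $\Delta_0$ in a way that mixes the mass — this is precisely what rules out a periodic decomposition and yields convergence of $\rho_n$ to $h$ (not merely along a subsequence), which is equivalent to exactness. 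I expect this last step to be the main obstacle: ergodicity and the density bounds follow fairly mechanically from the dichotomy and irreducibility, but exactness requires genuinely quantitative use of the Gibbs property and a careful aperiodicity argument to convert "every pair of symbols communicates at all large times" into $L^1$-convergence of the densities; in practice one cites the corresponding result for Gibbs--Markov maps in~\cite{Alves Book} and checks that the long-branch hypothesis W\ref{def:WGM:Long}) (which gives $m(F(\omega))\ge\delta_0$) replaces full branch in the relevant estimates.
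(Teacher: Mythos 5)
Your argument for the positivity of $d\nu_0/dm$, the upper bound, membership in $\mathcal F_\beta^+(\Delta_0)$, and uniqueness follows essentially the same route as the paper: both proofs rest on the dichotomy~\eqref{RDE1} and propagate positivity across partition elements via irreducibility and $F$-invariance. (The paper is slightly more careful in first checking that irreducibility forces every $\omega\in\mathcal P_0$ to be contained in $F(\omega_0)$ for some $\omega_0$, so that~\eqref{RDE1} actually applies on all of $\omega$; you assert the upgraded dichotomy on partition elements without spelling this out, but the step is routine.)

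Where you depart from the paper is in ergodicity and exactness. The paper does not argue these from scratch: it cites Aaronson--Denker--Urba\'nski~\cite{Aar Paper} for both, and then deduces uniqueness from ergodicity together with $d\nu_0/dm>0$. You instead sketch a self-contained argument, and the ergodicity part has a genuine gap. You claim that if $A$ is $F$-invariant with $0<\nu_0(A)<1$, then the density of $\nu_0|_A$, namely $h\mathbf 1_A/\nu_0(A)$, ``being an invariant density, again must be everywhere positive.'' But the positivity argument relies on the dichotomy~\eqref{RDE1}, which was derived in Remark~\ref{RD1} from the fact that $h$ arises as a pointwise limit of the specific sequence $\rho_n$. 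The function $h\mathbf 1_A$ is an invariant density, but there is no reason it arises this way, and indeed if $A$ cuts some $\omega\in\mathcal P_0$ into two positive-measure pieces, $h\mathbf 1_A$ visibly violates the dichotomy on $\omega$. To make this route work you would need to first show that any $F$-invariant set is (mod $0$) a union of partition elements; that is essentially the ergodicity statement being proved, so the argument is circular as written. Your exactness sketch is vaguer and ultimately defers to a citation, which is consistent with what the paper actually does. In short: the constructive part of your proof matches the paper; the ergodicity/exactness part should, as in the paper, simply invoke~\cite{Aar Paper} (or~\cite{Alves Book}) rather than attempt the dichotomy argument on $\nu_0|_A$.
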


\begin{proof}
The ergodicity and exactness of the measure $ \nu_0 $ is proved in \cite{Aar Paper}. Let us prove $ \frac{d\nu_{0}}{dm} \in \mathcal{F}_{\beta}^{+}( \Delta _{0} )$. From Proposition~\ref{RP1}, $d\nu_{0}/dm \in \mathcal{F}_{\beta}(\Delta_{0})$. 
Let $ \omega \in \mathcal{P}_0 $ be an arbitrary element. 
Since $F$ is an irreducible map, then $ \omega $ must intersect $ F(\omega_{0}) $ for some $\omega_{0}\in \mathcal{P}_0$, $\omega_0\neq \omega$, and this implies $ \omega \subset F(\omega_{0}) $. 
By Remark \ref{RD1}, ~\eqref{RDE1} holds for each $ x,y\in F(\omega_{0})$, and in particular it holds  for all $x, y\in \omega$.
So, its enough to prove $\frac{d\nu_{0}}{dm}>0$ on $\omega$.
Assume $\frac{d\nu_{0}}{dm}=0$ on $\omega$. By irreducibility, for any $ \omega^{\prime}\in \mathcal{P}_0 $  there exists $ n_0 \in \mathbb{N} $ such that $ m(\omega^{\prime}\cap F^{-n_0}(\omega)) >0$. By using the invariance of $ \nu_{0} $ we have 
\begin{equation*}
0=\nu_{0}(\omega)=\nu_{0}(F^{-n_0}(\omega))\geq \nu_{0}(\omega^{\prime}\cap F^{-n_0}(\omega)), 
\end{equation*}
and this means that we have $ \nu_{0}(\omega^{\prime}\cap F^{-n_0}\omega)=0 $. Then, we must have  $\frac{d\nu_{0}}{dm}=0$ on $\omega^{\prime}\cap F^{-n_0}(\omega)\subset \omega^{\prime}$.
From \eqref{RDE1}, we have $\frac{d\nu_{0}}{dm}=0$ on $ \omega^{\prime} $. Hence $ \nu_{0}(\omega^{\prime})=0 $ for any $ \omega^{\prime}\in \mathcal{P}_0 $, and this implies $ \nu_{0}(\Delta_{0}) =0$, which is a contradiction.
We conclude that $ \frac{d\nu_{0}}{dm}>0  $ on each  $ \omega \in \mathcal{P}_0$. Since $d\nu_{0}/dm \in \mathcal{F}_{\beta}(\Delta_{0})$  there is $ C_{0}>0$  such that $ 0<\frac{d\nu_{0}}{dm}\leq C_{0}$, which also implies the uniqueness of $\nu_{0}$. 
\end{proof}

\begin{remark}\label{gives ergodic measure for f}
If $f^{R}: \Delta_{0}\rightarrow \Delta_{0}$ is an irreducible induced WGM map for $f$ then, by Theorem~\ref{ergodic measure for irreducble WGM}, $f^{R}$ admits a unique ergodic invariant probability measure $\nu_{0}$ which is equivalent $\m$. We can define a measure $\tilde\mu$ on $M$ by
$$\tilde\mu=\sum_{j=0}^{\infty}f_{\ast}^{j}(\nu_{0}|\{R>j\}).$$
If $R\in L^{1}(m)$ and $f_{*}\m\ll \m$, then by standard results, $\mu=\frac{\tilde\mu}{\tilde\mu(M)}$ is the unique ergodic $f$-invariant probability measure such that $\mu\ll \m$ and $\mu(\Delta_{0})>0$.
\end{remark}

\section{Mixing measures for Tower maps}\label{Mixing measure for TM}

In this section, we first discuss the existence of an invariant probability measure for the tower map of an induced WGM map. Then, we provide conditions to ensure for this measure to be mixing.

Assume that $f^{R}: \Delta_{0}\rightarrow \Delta_{0}$ is an induced WGM map for $f: M\rightarrow M $.
We define a \emph{tower}
$$ \Delta=\{(x,\ell): x\in\Delta_{0} \mbox{ and } R(x)> \ell \geq 0 \}   $$

 and the tower map  $T: \Delta\rightarrow \Delta$ given by
$$ T(x,\ell)=\left\{
\begin{array}{ll}
	(x,\ell+1), & \mbox{ if } R(x)> \ell+1 \\
	(f^{R}(x),0), & \mbox{ if } R(x)=\ell+1
\end{array}
\right.     $$
Note that we can naturally identify the set $\{(x,0)\colon x\in \Delta_0\}\subset\Delta$  with $\Delta_{0}$, and the induced map $T^{R}:\Delta_{0}\rightarrow \Delta_{0}$ with $f^{R}$.
For each $\ell\geq0$, we define the \textit{$\ell$th level} of the tower 
$$\Delta_{\ell}=\{(x,\ell): x\in \Delta_0 \},$$
which is naturally identified with  $\{R>\ell\}\subset\Delta_{0}$. In view of this, we may extend the $\sigma$-algebra $\mathcal A$ and the reference measure $ \m $ on $ \Delta_{0} $ to a $\sigma$-algebra and a measure on $ \Delta $, that we still denote by $\mathcal A$ and $ \m $, respectively. We have
$$  
m(\Delta)=\int_{ \Delta_{0}}R\,dm,
$$
which implies that measure $ \m $ on $ \Delta $ is finite if and only if $ R\in L^1(\m) $.   
The countable partition $ \mathcal{P}_{0} $ of $ \Delta_{0} $ naturally induces an ($\m$ mod 0) partition of each level, that is, if $\mathcal{P}_{0}=\{\Delta_{0,i}\}_{i\in\mathbb N}$ is the partition of $ \Delta_{0} $, then $\{\Delta_{\ell,i}\}_{i \in \mathbb{N}}$, where $\Delta_{\ell,i}=\{(x,\ell)\in \Delta_{\ell}: (x,0)\in\Delta_{0,i}\} $ forms a partition of $\Delta_{\ell}.$
Collecting all these partitions, we obtain an ($\m$ mod 0) partition $ \eta $ of $ \Delta .$ 
For each $ n\geq 1 $, we set
$$ 
\eta_{n}=\bigvee_{i=0}^{n-1}T ^{-i}\eta.
$$ 
We can extend the separation time to $ \Delta\times\Delta $ in the following way:
if $ x, y \in \Delta_{\ell} $, then there are unique  $ x_{0}, y_{0} \in \Delta_{0} $ such that $x=T^{\ell}(x_{0}) $ and $ y=T^{\ell}(y_{0}) $, and in this case we set $ s(x, y)=s(x_{0}, y_{0}) $, otherwise set $ s(x, y)=0 $.
It is straightforward to check that $ J_{T} $ is
$$ 
J_{T}(x,\ell)=\left\{
\begin{array}{ll}
	1, & \mbox{ if } R(x)> \ell+1 \\
	J_{f^{R}(x)}, & \mbox{ if } R(x)=\ell+1.
\end{array}
\right.  
$$
Given $0< \beta <1$ we can similarly define the following spaces of densities for the tower: 
\begin{equation*}
	\mathcal{F}_{\beta}( \Delta ) = \{\varphi : \Delta\rightarrow \mathbb{R} \colon \exists C_{\varphi} > 0 :|\varphi(x)-\varphi(y)|\leq C_{\varphi}\beta^{s(x,y)}, \forall x,y \in \Delta\}
\end{equation*}
and
\begin{multline*}
	\mathcal{F}_{\beta}^{+}( \Delta ) = \{\varphi\in \mathcal{F}_{\beta}( \Delta ) \colon \exists C_{\varphi}' > 0 \, \text{ s.t. }\,\\ \varphi(x)>0 \text{ and } \left|\frac{\varphi(x)}{\varphi(y)}-1\right|\leq C_{\varphi}'\beta^{s(x,y)}, \forall x,y \in\omega \in \eta \}.
\end{multline*}
Assuming $ R\in L^{1}(m) $, we have $\mathcal{F}_{\beta}(\Delta)\subset L^{\infty}(m) \subset L^{1}(m)$. The next result is similar to~\cite[Theorem 1]{Y99} (see also~\cite[Theorem 3.24]{Alves Book}). We notice that in our setting we do not have a positive lower bound for $d\nu/d\m$.

\begin{theorem}\label{T1}
Let $T: \Delta\rightarrow \Delta$ be the tower map of an irreducible induced WGM map $f^R$ with $R\in L^{1}(m)$. If $\nu_{0}$ is the unique ergodic $f^{R}$-invariant probability measure such that $ \frac{d\nu_{0}}{dm}\in \mathcal{F}_{\beta}^{+}( \Delta_0 ) $, then	
$$\nu=\frac{1}{\sum_{j=0}^{\infty}\nu_{0}\{R>j\}}\sum_{j=0}^{\infty}T_{\ast}^{j}(\nu_{0}|\{R>j\})$$ 
is the unique ergodic $T$-invariant probability measure such that $\nu\ll m$ with $ \frac{d\nu}{dm}\in \mathcal{F}_{\beta}^{+}( \Delta )$, and there is $ C_{0}>0 $  such that $0<\frac{d\nu}{dm}\leq C_{0}.$
\end{theorem}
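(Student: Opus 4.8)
The plan is to follow the classical construction of the physical measure on a Young tower; the point is that everything reduces to transferring to $\Delta$ the properties of the base map $F=f^{R}$ already recorded in Theorem~\ref{ergodic measure for irreducble WGM}, so that the full branch property plays no role, and the only feature genuinely lost compared to the classical statement---a positive lower bound for the density---is precisely the one we do not claim. Write $\tilde\nu=\sum_{j\geq0}T_{*}^{j}(\nu_{0}|\{R>j\})$, so that $\nu=\tilde\nu/\tilde\nu(\Delta)$. First I would check the normalization is finite and positive: since $\tilde\nu(\Delta)=\sum_{j\geq0}\nu_{0}\{R>j\}=\int_{\Delta_{0}}R\,d\nu_{0}$ and $0<d\nu_{0}/dm\leq C_{0}$ by Theorem~\ref{ergodic measure for irreducble WGM}, we get $1\leq\tilde\nu(\Delta)\leq C_{0}\int_{\Delta_{0}}R\,dm<\infty$ because $R\in L^{1}(m)$; thus $\nu$ is a well defined probability measure.

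Next I would verify $T$-invariance. As $R\geq1$, the $j=0$ term of $\tilde\nu$ is $\nu_{0}$, and pushing forward and re-indexing,
\[
T_{*}\tilde\nu=\sum_{k\geq1}T_{*}^{k}\big(\nu_{0}|\{R>k-1\}\big)=\sum_{k\geq1}T_{*}^{k}\big(\nu_{0}|\{R>k\}\big)+\sum_{k\geq1}T_{*}^{k}\big(\nu_{0}|\{R=k\}\big).
\]
Since $T^{k}(x,0)=(f^{R}(x),0)$ on $\{R=k\}$, the last sum equals $F_{*}\big(\nu_{0}|\bigcup_{k\geq1}\{R=k\}\big)=F_{*}\nu_{0}=\nu_{0}$ by the invariance of $\nu_{0}$; adding back the $k=0$ term gives $T_{*}\tilde\nu=\tilde\nu$, hence $T_{*}\nu=\nu$. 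For absolute continuity I would observe that if $R(x)>j$ then $T^{i}(x,0)=(x,i)$ for $0\leq i\leq j$ with $J_{T}\equiv1$ along this orbit, so $T^{j}$ maps $\{R>j\}\subset\Delta_{0}$ bijectively onto $\Delta_{j}$ preserving $m$; therefore $T_{*}^{j}(\nu_{0}|\{R>j\})\ll m$ with density equal, under the identification $\Delta_{j}\leftrightarrow\{R>j\}$, to $d\nu_{0}/dm$. Summing over the levels, $\nu\ll m$, and $d\nu/dm$ equals $\tilde\nu(\Delta)^{-1}d\nu_{0}/dm$ on each level $\Delta_{\ell}$ (identified with $\{R>\ell\}$); in particular $0<d\nu/dm\leq C_{0}$ since $\tilde\nu(\Delta)\geq1$ and $d\nu_{0}/dm\leq C_{0}$.

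It then remains to check $d\nu/dm\in\mathcal{F}_{\beta}^{+}(\Delta)$ and to establish ergodicity and uniqueness. For the membership, recall that $s(x,y)=s(x_{0},y_{0})$ when $x,y$ lie in a common level $\Delta_{\ell}$ (with $x=T^{\ell}x_{0}$, $y=T^{\ell}y_{0}$), that $s(x,y)=0$ otherwise, and that each element of $\eta$ is some $\Delta_{\ell,i}$ identified with $\Delta_{0,i}\in\mathcal{P}_{0}$; hence for $x,y$ in a common level the H\"older and ratio bounds for $d\nu/dm$ are inherited verbatim (up to the factor $\tilde\nu(\Delta)^{-1}$) from those of $d\nu_{0}/dm\in\mathcal{F}_{\beta}^{+}(\Delta_{0})$, while for $x,y$ in distinct levels $s(x,y)=0$ and the difference is trivially bounded by $2C_{0}=2C_{0}\beta^{s(x,y)}$; uniformity of the constants over $\eta$ comes from their uniformity over $\mathcal{P}_{0}$. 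For ergodicity, every point of $\Delta_{\ell}$ reaches $\Delta_{0}$ in $R-\ell$ steps, so $\bigcup_{n\geq0}T^{-n}\Delta_{0}=\Delta$, any $T$-invariant probability charges $\Delta_{0}$, and $f^{R}$ is precisely the first return map of $T$ to $\Delta_{0}$. Thus, after the standard reduction to a strictly $T$-invariant representative, a $T$-invariant set $A$ yields an $f^{R}$-invariant set $A\cap\Delta_{0}$, so $\nu_{0}(A\cap\Delta_{0})\in\{0,1\}$ by Theorem~\ref{ergodic measure for irreducble WGM}; since $\tilde\nu(A)=\sum_{j}\nu_{0}(\{R>j\}\cap A\cap\Delta_{0})$ (using $T^{-j}A=A$ and $\{R>j\}\subset\Delta_{0}$), the first alternative forces $\nu(A)=0$ and the second, applied to $\Delta\setminus A$, forces $\nu(A)=1$. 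Finally, any ergodic $T$-invariant probability $\nu'\ll m$ has $\nu'(\Delta_{0})>0$, and its normalized restriction to $\Delta_{0}$ is an $f^{R}$-invariant probability absolutely continuous with respect to $m$, hence equals $\nu_{0}$ by the uniqueness in Theorem~\ref{ergodic measure for irreducble WGM}; as a $T$-invariant measure on the tower is determined by its restriction to the base through the defining formula, $\nu'=\nu$.

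The genuinely delicate points---and where I expect the bookkeeping to demand the most care---are the claims that $T^{j}$ carries $\{R>j\}$ onto $\Delta_{j}$ without revisiting the base and with unit Jacobian (which is what makes both the $T$-invariance computation and the density formula clean) and the routine but fiddly null-set bookkeeping needed when passing between $T$-invariant and $f^{R}$-invariant sets in the ergodicity argument; all the remaining assertions are direct transfers of the properties of $\nu_{0}$ supplied by Theorem~\ref{ergodic measure for irreducble WGM}.
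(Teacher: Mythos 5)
Your proof is correct and follows the standard construction that the paper itself does not spell out, instead deferring to Young's Theorem~1 in \cite{Y99} and Theorem~3.24 in \cite{Alves Book}. The key steps you supply---the telescoping computation showing $T_*\tilde\nu=\tilde\nu$, the unit-Jacobian identification of $\Delta_j$ with $\{R>j\}$ giving the density formula and the bound $0<d\nu/dm\leq C_0$, the transfer of the $\mathcal F_\beta^+$ regularity level by level, the first-return-map argument for ergodicity, and the determination of $\nu'$ by its base restriction for uniqueness---are exactly the arguments those references use, carried out verbatim in the WGM setting where, as you correctly note, the only lost feature is the positive lower bound for the density.
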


\begin{remark}\label{re:log prop on tower}
From Remark~\ref{RD1} and the fact that the measure $\nu$ in an upper level of the tower is a normalized copy of the  measure $\nu_0$ restricted to some subset of the basis $\Delta_0$, we conclude that there exists $C > 0$ such that for all $ x,y \in\omega \in \eta$
$$\left|\log\frac{\frac{d\nu}{dm}(x)}{\frac{d\nu}{dm}(y)}\right|\leq C\beta^{s(x,y)}.$$
\end{remark}
\medskip
In general, the ergodic $ T $-invariant probability measure $ \nu\ll m $ given by Theorem \ref{T1} is not mixing.
For instance, if $ \gcd\{R\}=d\geq2 $, then we have
$$ T^{-dn+1}(\Delta_{0})\cap \Delta_{0}=\emptyset, \text{ for } n\geq 1,$$
but $ \nu(\Delta_{0})>0$, which means that the measure $ \nu $ is not mixing. Hence we conclude that $ \gcd\{R\}=1 $ is a necessary condition for the measure $ \nu $ to be mixing.
If $f^{R} :\Delta_{0} \rightarrow \Delta_{0}$ is an induced WGM map that satisfy the full branch property then $ \gcd\{R\}=1 $ is a sufficient condition to conclude that $ \nu $ is mixing \cite{Y99}. However, in the absence of the full branch property, this condition is not enough to guarantee that the measure $ \nu $ is mixing.
 For instance, consider $ M=\Delta_{0}=[0,1] $, $\m$ to be the Lebesgue measure, $ \mathcal{P}_{0}=\{\omega_{1}, \omega_{2}, \omega_{3}\} $ an ($m$ mod 0) partition of $ \Delta_{0} $, with $\m(\omega_i)>0$, for $i=1,2,3$, and $f\colon\Delta_0\to\Delta_0$ such that
\[ f(\omega_{1})=\omega_{2}\cup\omega_{3}, \,\, f(\omega_{2})=\omega_{1} \,\,\text{ and }\,\, f(\omega_{3})=\omega_{1}.
\]
 We define $ R:\Delta_{0}\rightarrow \mathbb{N} $ as $ R(\omega_{i})=i$, $i=1,2,3$.
We have 
 $ (f^R)^2(\omega_{1})=\Delta_{0} $, $ (f^R)^2(\omega_{2})=\Delta_{0} $ and $  (f^R)^3(\omega_{3})=\Delta_{0}.$ This means that $ f^R $ is an aperiodic induced WGM map with $\gcd\{R\}=1$. We can see that for the tower map $T$ of $f^R$ we have 
 $ T^k(\omega_{1})\cap\omega_{1}=\emptyset,$ for $ k=2n+1$ and all $n\in\mathbb N$, but $ \nu(\omega_{1})>0 $, so the measure $ \nu $ is not mixing.

In order to ensure that the measure $ \nu $ is mixing with respect to a tower map of an aperiodic induced WGM map $f^R$, we also assume that $f^R$ has a coprime block. In fact, the coprime block is only used to get the measure $ \nu $ (hence $\mu$) to be mixing.

\begin{lemma}\label{doubly aperiodic lemma}
	Let  $T: \Delta\rightarrow \Delta$ be the tower map of an aperiodic induced WGM map $ f^R$ with a coprime block. For all $\omega \in \mathcal{P}_{0}^{n}$ there exist $ t_{0}=t_{0}(\omega)\in \mathbb{N}$ such that  $T^{t}(\omega)\cap\Delta_{0} \neq \emptyset, \forall t \geq t_{0}$.
\end{lemma}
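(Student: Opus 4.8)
The plan is to turn the statement into one about realizable return-time words and then exploit the coprime block via a classical numerical-semigroup argument. First I would record that the $T$-orbit of a base point $(y,0)$, $y\in\Delta_0$, meets $\Delta_0$ exactly at the return times $R_k(y):=\sum_{i=0}^{k-1}R\big((f^R)^i(y)\big)$, $k\ge0$; since $R$ is constant on elements of $\mathcal P_0$, to get $T^{t}(\omega)\cap\Delta_0\neq\emptyset$ it suffices to produce, for every large $t$, a \emph{realizable itinerary of total length $t$ issued from $\omega$}, i.e.\ elements $\omega=\omega^{(0)},\dots,\omega^{(k)}$ of $\mathcal P_0$ carried by a positive-measure $E\subseteq\omega$ with $(f^R)^i(E)\subseteq\omega^{(i)}$ for $0\le i\le k$ and $\sum_{i=0}^{k-1}R(\omega^{(i)})=t$. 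I would use throughout the elementary fact, coming from the Markov property W\ref{def:WGM:Markov}) and the nonsingularity of $f^R$, that whenever $f^R(\omega')\supseteq\omega''$ ($m$ mod $0$) and $W\subseteq\omega''$ has positive measure, the set $\omega'\cap(f^R)^{-1}(W)$ has positive measure and $f^R$ maps it bijectively onto $W$ ($m$ mod $0$); iterating, any itinerary in which each element's image contains ($m$ mod $0$) the next one is realizable, the carrying set being mapped onto the last element ($m$ mod $0$). I would then reduce to $\omega\in\mathcal P_0$: a general $\omega\in\mathcal P_0^{n}$ equals $\bigcap_{i=0}^{n-1}(f^R)^{-i}(\omega^{(i)})$, on which $R_{n-1}$ is a constant $\sigma$ and $T^{\sigma}$ maps $\omega$ bijectively onto $\omega^{(n-1)}\in\mathcal P_0$ ($m$ mod $0$), so a positive-measure witness for $\omega^{(n-1)}$ pulls back through $T^{\sigma}|_{\omega}$ to one for $\omega$, with $t_0(\omega)=\sigma+t_0(\omega^{(n-1)})$.

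The core step would use the coprime block $\omega_1,\dots,\omega_N$. Since $f^R(\omega_i)\supseteq\omega_1\cup\cdots\cup\omega_N$ ($m$ mod $0$) for each $i$, \emph{every} itinerary with entries in $\{\omega_1,\dots,\omega_N\}$ is realizable; in particular each loop $\omega_1,\omega_{i_1},\dots,\omega_{i_{k-1}},\omega_1$ (with $k\ge1$, $i_j\in\{1,\dots,N\}$) is carried by a positive-measure $E\subseteq\omega_1$ with $T^{t}(E)=\omega_1$ ($m$ mod $0$), where $t=R(\omega_1)+R(\omega_{i_1})+\cdots+R(\omega_{i_{k-1}})$. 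As the loop varies, $t$ runs over $R(\omega_1)+\Gamma$, where $\Gamma$ is the numerical semigroup (containing $0$) generated by $R(\omega_1),\dots,R(\omega_N)$; since $\gcd\{R(\omega_i)\}=1$, the Sylvester--Frobenius coin-problem theorem ensures $\Gamma$ contains all sufficiently large integers, so $R(\omega_1)+\Gamma$ contains every $t\ge t_1$ for some $t_1$. This settles $\omega=\omega_1$ — and, by the same argument, any element of the block — with witnesses lying inside $\omega_1$.

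For a general $\omega\in\mathcal P_0$ I would first connect it to $\omega_1$: by aperiodicity (irreducibility already suffices) there is $n$ with $m\big(\omega\cap(f^R)^{-n}(\omega_1)\big)>0$, and choosing a positive-measure cylinder $\omega'\in\mathcal P_0^{\,n+1}$ with $\omega'\subseteq\omega\cap(f^R)^{-n}(\omega_1)$ yields a constant $\tau$ on $\omega'$ with $T^{\tau}$ mapping $\omega'$ bijectively onto $\omega_1$ ($m$ mod $0$). I would then concatenate with the loops of the core step: given $t\ge t_1$, if $E_t\subseteq\omega_1$ is the corresponding witness then $W:=\omega'\cap T^{-\tau}(E_t)$ has positive measure (because $m(\omega_1\setminus T^{\tau}(\omega'))=0$) and $T^{\tau+t}(W)\subseteq\Delta_0$, so $T^{\tau+t}(\omega)\cap\Delta_0\neq\emptyset$. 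Hence $t_0(\omega)=\tau+t_1$ works, and the reduction of the first paragraph extends this to all $\omega\in\mathcal P_0^{n}$.

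I expect the main obstacle to be exactly the absence of the full-branch property: it prevents itineraries from being concatenated freely, since the $f^R$-image of a positive-measure set need not cover the partition elements it meets. The way around it is to rely only on the two segments that do come with a covering — inside the block, where $f^R(\omega_i)$ covers $\omega_1\cup\cdots\cup\omega_N$ ($m$ mod $0$) by hypothesis, and along the connecting cylinder $\omega'$, where $T^{\tau}|_{\omega'}$ surjects onto $\omega_1$ ($m$ mod $0$) merely because $\omega'$ is a cylinder — while keeping every carrying set that appears of positive measure, so that the null sets hidden in the ``$m$ mod $0$'' of W\ref{def:WGM:Markov}) do no harm. The remaining ingredient, the classical coin-problem theorem, is routine.
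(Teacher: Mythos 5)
Your proof is correct and takes essentially the same approach as the paper: combine the Frobenius coin-problem structure of the coprime block with aperiodicity (you rightly note irreducibility already suffices for the connecting step) to reach the block from an arbitrary cylinder, then concatenate loops inside the block. The organization differs cosmetically — you reduce first to $\mathcal{P}_0$ and fix $\omega_1$-based loops so that realizable times cover $R(\omega_1)+\Gamma$, whereas the paper works directly with $\omega\in\mathcal{P}_0^n$ and phrases the block step as $T^{t'}(\alpha_1\cup\cdots\cup\alpha_N)\supseteq\alpha_1\cup\cdots\cup\alpha_N$ for all $t'\geq t_0'$ — but the key ideas match.
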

\begin{proof}
	By the definition of a coprime block, we can find $ \alpha_{1}, \alpha_{2}, ...\alpha_{N}\in \mathcal{P}_0 $ such that, setting $ R(\alpha_i)=R_{i} $, we have $\gcd\{R_{i}\}_i=1$ and  
	$$
	f^{R}(\alpha_{i})\supseteq \alpha_{1}\cup\alpha_{2}\cup\cdots\cup\alpha_{N}\quad
	$$
for all $i=1,\ldots,N$. Since $\gcd\{R_{i}\}_{i}=1$, there exists $t_{0}^{\prime} \in \mathbb{N}$ such that for all $t' \geq t_{0}^{\prime}$ we can find integers $a_{1},..., a_{k} \geq 0$ for which $t'=a_{1} R_{1}+...+a_{k} R_{k}$, which implies that 
	\begin{equation}\label{doubly aperiodic eq}
	T^{t'}(\alpha_{1}\cup\alpha_{2}\cup...\cup\alpha_{N})\supseteq \alpha_{1}\cup\alpha_{2}\cup\cdots\cup\alpha_{N}, \text{ for all } t' \geq t_{0}^{\prime}.
\end{equation}
	By the aperiodicity of $ (f^R, \mathcal{P}_{0}) $, for $\omega \in \mathcal{P}_{0}^{n}$, there exists $ n_{0}\in \mathbb{N} $ such that $  (f^R)^{n}(\omega)\supseteq \alpha_{1}$ for all  $n\geq  n_{0}$.
Then, we can find 
\[\tilde{\omega}=\omega\cap(f^R)^{-1}(\omega_{1})\cap...\cap(f^R)^{-n_{0}+1}(\omega_{n_{0}-1})\in \mathcal{P}_0^{n_{0}}\]
  for some $ \omega_{1}, \omega_{2},...,\omega_{n_{0}-1}\in \mathcal{P}_{0} $ such that $(f^R)^{n_{0}}(\tilde{\omega})=\alpha_{1}$.
Taking 
\[ K_{\alpha_{1}}= R(\omega)+ R(\omega_{1})+...+R(\omega_{n_{0}-1}) \]
 we can easily ensure that  $T^{ K_{\alpha_{1}}}(\omega)\supseteq \alpha_{1}$.
By \eqref{doubly aperiodic eq} and the fact that \[T^{ K_{\alpha_{1}}+R_{1}}(\omega)\supseteq \alpha_{1}\cup\alpha_{2}\cup\cdots\cup\alpha_{N},\]
 we have $T^{t'}(T^{K_{\alpha_{1}}+R_{1}}(\omega))\supseteq \alpha_{1}\cup\alpha_{2}\cup\cdots\cup\alpha_{N}$, for all $ t' \geq t_{0}^{\prime}.$ Hence we conclude that 
 $T^{t}(\omega) \cap \Delta_{0} \neq \emptyset$, for all $t \geq t_{0}^{\prime}+K_{\alpha_{1}}+R_{1}.$
\end{proof}

\begin{lemma}\label{doubly aperiodic Corr}
Let $T: \Delta\rightarrow \Delta$ be the tower map of an aperiodic induced WGM map $f^R$ with a coprime block. For all $\omega \in \mathcal{P}_{0}^{n}$, $\epsilon_{0}>0$ and $ \ell_{0}\geq0 $ there exist $ t_{0}\in \mathbb{N}$, such that  for all $t \geq t_{0}$,
\begin{equation*}
	m(T^{t} (\omega)) \geq m\left(\bigcup_{j \leq \ell_{0}} \Delta_{j}\right)-\epsilon_{0}. 
\end{equation*}
\end{lemma}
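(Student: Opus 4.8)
The plan is to show that $T^{t}(\omega)$ eventually covers, up to arbitrarily small measure, each of the bottom levels $\Delta_{0},\dots,\Delta_{\ell_{0}}$, and then to sum over these (finitely many, pairwise disjoint) levels. Since $R$ is constant on each element of $\mathcal{P}_{0}$, each set $\{R>j\}$ — which is $\Delta_{j}$ under the canonical identification — is ($m$ mod $0$) a countable union of elements of $\mathcal{P}_{0}$, and $\sum_{j=0}^{\ell_{0}}m(\Delta_{j})=m\bigl(\bigcup_{j\le\ell_{0}}\Delta_{j}\bigr)<\infty$. Fix $j\le\ell_{0}$ and $\omega'\in\mathcal{P}_{0}$ with $R(\omega')>j$. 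If $\omega'\subseteq T^{t-j}(\omega)\cap\Delta_{0}$, then applying $T^{j}$ and using that no point over $\omega'$ reaches the base in fewer than $R(\omega')>j$ steps gives $\{(z,j):z\in\omega'\}\subseteq T^{t}(\omega)$. Thus everything reduces to the claim $(\star)$: \emph{for every $\omega'\in\mathcal{P}_{0}$ there is $\tau(\omega')\in\mathbb{N}$ such that $\omega'\subseteq T^{t}(\omega)$, as subsets of the base, for all $t\ge\tau(\omega')$.}

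Granting $(\star)$, I would conclude as follows. Fix $\epsilon_{0}>0$ and put $\epsilon'=\epsilon_{0}/(\ell_{0}+1)$. For each $j\le\ell_{0}$, since $\sum_{R(\omega')>j}m(\omega')=m(\Delta_{j})$ is finite, choose a \emph{finite} subfamily $\mathcal{G}_{j}\subseteq\{\omega'\in\mathcal{P}_{0}:R(\omega')>j\}$ with $\sum_{\omega'\in\mathcal{G}_{j}}m(\omega')\ge m(\Delta_{j})-\epsilon'$. For $t\ge t_{j}:=j+\max_{\omega'\in\mathcal{G}_{j}}\tau(\omega')$, applying $(\star)$ with $t-j$ in place of $t$ together with the level-$j$ inclusion above shows that $T^{t}(\omega)\cap\Delta_{j}$ contains the pairwise disjoint copies $\{(z,j):z\in\omega'\}$, $\omega'\in\mathcal{G}_{j}$, so $m(T^{t}(\omega)\cap\Delta_{j})\ge m(\Delta_{j})-\epsilon'$. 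Setting $t_{0}=\max_{0\le j\le\ell_{0}}t_{j}$ and summing over the disjoint levels, $m(T^{t}(\omega))\ge\sum_{j=0}^{\ell_{0}}\bigl(m(\Delta_{j})-\epsilon'\bigr)=m\bigl(\bigcup_{j\le\ell_{0}}\Delta_{j}\bigr)-\epsilon_{0}$ for all $t\ge t_{0}$.

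It remains to prove $(\star)$, for which I would reuse the mechanism of Lemma~\ref{doubly aperiodic lemma}. Let $A=\alpha_{1}\cup\cdots\cup\alpha_{N}$ be the coprime block; arguing as in the proof of that lemma there are $t_{0}'$ and $t_{1}$ with $T^{t'}(A)\supseteq A$ for all $t'\ge t_{0}'$ (this is~\eqref{doubly aperiodic eq}) and $T^{t}(\omega)\supseteq A$ for all $t\ge t_{1}$. Fix $\omega'\in\mathcal{P}_{0}$. By aperiodicity of $(f^{R},\mathcal{P}_{0})$ and nonsingularity, and since forward $f^{R}$-images of partition elements are ($m$ mod $0$) unions of partition elements, there is $k'\in\mathbb{N}$ with $(f^{R})^{k'}(\alpha_{1})\supseteq\omega'$; as $\alpha_{1}$ is ($m$ mod $0$) the union of the $k'$-cylinders it contains and $\omega'$ is a single partition element, some $k'$-cylinder $C_{0}\subseteq\alpha_{1}$ satisfies $(f^{R})^{k'}(C_{0})\supseteq\omega'$. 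On $C_{0}$ the induced Birkhoff sum $R+R\circ f^{R}+\cdots+R\circ(f^{R})^{k'-1}$ is a constant $\pi=\pi(\omega')$, and since $T^{\pi}(x,0)=((f^{R})^{k'}(x),0)$ for $x\in C_{0}$, we get $T^{\pi}(\alpha_{1})\supseteq T^{\pi}(C_{0})=(f^{R})^{k'}(C_{0})\times\{0\}\supseteq\omega'$. Now for every $t'\ge t_{0}'$ we have $\alpha_{1}\subseteq A\subseteq T^{t'}(A)$, hence $\omega'\subseteq T^{\pi}(\alpha_{1})\subseteq T^{\pi}(T^{t'}(A))=T^{\pi+t'}(A)$; thus $\omega'\subseteq T^{s}(A)$ for all $s\ge\pi+t_{0}'$, and therefore $\omega'\subseteq T^{t-t_{1}}(A)\subseteq T^{t}(\omega)$ for all $t\ge t_{1}+\pi+t_{0}'=:\tau(\omega')$, which is $(\star)$.

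The delicate point is the uniformity in $t$: it is easy to place $\omega'$ in $T^{s}(\omega)$ for $s$ a suitable sum of return times, but $(\star)$ is needed for \emph{all} large $t$, and this is exactly what the coprime block provides through~\eqref{doubly aperiodic eq}, which lets us absorb an arbitrary amount of extra time without leaving $A$. Everything else — that positive–measure intersections survive the nonsingular map $f^{R}$, that forward images of partition elements are ($m$ mod $0$) unions of elements of $\mathcal{P}_{0}$, that $R$ restricted to a $k'$-cylinder has constant induced Birkhoff sum, and the finite truncations and level summation — is routine; note in particular that, as only the finitely many bottom levels are involved, the proof does not use $R\in L^{1}(m)$.
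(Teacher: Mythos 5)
Your proposal is correct and takes essentially the same approach as the paper: both reduce the estimate to showing that $T^{t}(\omega)$ (as a subset of the base) eventually contains a finite collection of partition elements of total measure close to $m(\Delta_0)$, then push these up to the bottom levels $\Delta_0,\dots,\Delta_{\ell_0}$; the key time-absorption mechanism $T^{t'}(A)\supseteq A$ for all $t'\ge t_0'$, furnished by the coprime block, is identical. The only cosmetic differences are that you isolate the uniform-time statement as an explicit claim $(\star)$ and choose a separate finite family $\mathcal{G}_j$ of partition elements for each level $j\le\ell_0$, whereas the paper fixes a single finite union $Z\subset\Delta_0$ and uses $Z\cap\{R>j\}$ for every level; you also use $\epsilon'=\epsilon_0/(\ell_0+1)$, which correctly accounts for the $\ell_0+1$ levels.
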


\begin{proof}
	Let $ Z $ be a finite union of elements of $ \mathcal{P}_{0} $  such that $ m(Z)\geq m(\Delta_{0})-\frac{\epsilon_{0}}{\ell_{0}} $. 
	By aperiodicity of $ (f^R, \mathcal{P}_{0}) $, for  $\omega \in \mathcal{P}_{0}^{n}$, there exist $ n_{0}\in \mathbb{N} $ such that $$m\big((f^R)^n(\omega)\big)\geq m(Z),  \text{ for all } n\geq  n_{0}.$$
 As in the proof of Lemma ~\ref{doubly aperiodic lemma}, we have 
	\begin{equation}\label{d.aperiodic cor eq  }
	T^{t'}(T^{K_{\alpha_{1}}+R_{1}}(\omega))\supseteq \alpha_{1}\cup\alpha_{2}\cup\cdots\cup\alpha_{N}, \text{ for all }  t' \geq t_{0}'.
	\end{equation}
	Let $ \bar{\alpha} $ be an arbitrary element of $ \mathcal{P}_{0} $ such that $\bar{\alpha}\subset Z  $. For any $ \alpha_{j} \in \{ \alpha_{1}, \alpha_{2},...,\alpha_{N} \}  $, there exist $ k_{0}\in \mathbb{N} $ such that $(f^R)^{n}(\alpha_{j})\supseteq \bar{\alpha}$ for all  $n\geq  k_{0}$.
	As in the proof of Lemma ~\ref{doubly aperiodic lemma} we can find $ \tilde{\alpha_{j}}\in \mathcal{P}_{0}^{k_{0}}$, $K_{\bar{\alpha}}\in \mathbb{N}$ such that $(f^R)^{k_{0}}(\tilde{\alpha_{j}})=\bar{\alpha}$ and  $ T^{ K_{\bar{\alpha}}}(\alpha_{j})\supseteq \bar{\alpha}.$ 
	Since $T^{t'}(\alpha_{1}\cup\alpha_{2}\cup...\cup\alpha_{N})\supseteq \alpha_{1}\cup\alpha_{2}\cup...\cup\alpha_{N} $ for all $t' \geq t_{0}'$ and $T^{ K_{\bar{\alpha}}}(\alpha_{j})\supseteq \bar{\alpha}$, this implies that $ T^{t'+ K_{\bar{\alpha}}}(\alpha_{1}\cup\alpha_{2}\cup...\cup\alpha_{N})\supseteq \bar{\alpha} $ for all $t' \geq t_{0}'$ and hence we can find $t_1'$ such that  $ T^{t'}(\alpha_{1}\cup\alpha_{2}\cup...\cup\alpha_{N})\supseteq Z $ for all $t' \geq t_{1}'$, which, together with \eqref{d.aperiodic cor eq }, implies that we  can find $t_2'$ such that $ T^{t'}(\omega)\supseteq Z $ for all $t' \geq t_{2}'$.
		Hence by using the above fact and Lemma \ref{doubly aperiodic lemma}, we can find  $ t_0> 0 $ such that for all $t\geq t_{0}$,
	\begin{equation*}
		m(T^{t} (\omega))\geq\sum_{j\leq\ell_{0}}\left(\m(\Delta_{j})-\frac{\epsilon_{0}}{\ell_{0}}\right)= \m\left(\bigcup_{j \leq \ell_{0}} \Delta_{j}\right)-\epsilon_{0}. 
	\end{equation*}
\end{proof}

\begin{theorem}
	Let  $T: \Delta\rightarrow \Delta$ be the tower map of an aperiodic induced WGM map $ f^R $ with a coprime block and $R\in L^{1}(m)$. Then the $T$-invariant probability measure $\nu$ is exact. 
\end{theorem}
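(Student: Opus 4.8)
\emph{Plan of proof.} Exactness of $\nu$ means that the tail $\sigma$-algebra $\bigcap_{n\ge 0}T^{-n}\mathcal A$ is $\nu$-trivial. I would recast this through the transfer operator: let $\mathcal L_T\colon L^1(\Delta,m)\to L^1(\Delta,m)$ be the transfer operator of $T$ with respect to $m$ (so $\int(\psi\circ T)\,g\,dm=\int\psi\,\mathcal L_T g\,dm$), and write $h=d\nu/dm$, which by Theorem~\ref{T1} lies in $\mathcal F_\beta^+(\Delta)$ and is bounded (but has, in general, no positive lower bound). By a standard criterion (Lin's theorem; see e.g. \cite{Alves Book} or \cite{Aar Paper}), $\nu$ is exact if and only if $\|\mathcal L_T^{\,n}g-h\|_{L^1(m)}\to 0$ for every density $g\in L^1(\Delta,m)$. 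Since $\mathcal L_T$ is an $L^1$-contraction with $\mathcal L_T h=h$, the sequence $\|\mathcal L_T^{\,n}g-h\|_{L^1(m)}$ is non-increasing, and by linearity together with the convexity of the statement it suffices to treat $g$ running over a family of densities whose convex combinations are $L^1$-dense; I would take the normalised indicators of the elements of $\bigcup_n\eta_n$, which generate $\mathcal A$ $(m\bmod 0)$ by W2. Using that $J_T\equiv 1$ on the climbing part of the tower and the bounded distortion of $f^R$ (Lemma~\ref{lem:WGMprop}), one reduces further, after finitely many iterates of $T$, to densities supported on a single cylinder of the base $\Delta_0$.

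The core step is to show $\|\mathcal L_T^{\,n}g_1-\mathcal L_T^{\,n}g_2\|_{L^1(m)}\to 0$ for any two such densities $g_1,g_2$; taking $g_2=h$ then concludes. I would run the usual coupling: set $p_n:=\int\min\{\mathcal L_T^{\,n}g_1,\mathcal L_T^{\,n}g_2\}\,dm$, so that $\|\mathcal L_T^{\,n}g_1-\mathcal L_T^{\,n}g_2\|_{L^1(m)}=2(1-p_n)$, and observe that $(p_n)_n$ is non-decreasing because $\mathcal L_T$ is positive, $\min\{\mathcal L_T u,\mathcal L_T v\}\ge\mathcal L_T\min\{u,v\}$. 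The claim will follow from the ``split off a common piece and iterate'' scheme: it is enough to produce $N_0\in\mathbb N$ and $\delta_0>0$, depending only on the distortion constants, such that for every density $g$ in the bounded subset $\mathcal C\subset\mathcal F_\beta^+(\Delta)$ produced by bounded distortion one has $\mathcal L_T^{\,N_0}g\ge\delta_0 h$ pointwise. Indeed, writing then $\mathcal L_T^{\,N_0}g_i=\delta_0 h+(1-\delta_0)\hat g_i$, with $\hat g_i$ again in $\mathcal C$ after one more application of $T$, and iterating, one gets $1-p_{kN_0}\le(1-\delta_0)^k\to 0$.

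Establishing the pointwise bound $\mathcal L_T^{\,N_0}g\ge\delta_0 h$ uniformly over $\mathcal C$ is the heart of the matter, and is delicate precisely because $h$ has no positive lower bound: one must reproduce the \emph{shape} of $h$, not merely a constant. Here the three hypotheses come in. First, $g$ is bounded and $R\in L^1(m)$ gives $m\!\left(\bigcup_{j>\ell}\Delta_j\right)\to 0$, so all but an arbitrarily small fraction of the mass of $g$ sits in finitely many levels and reaches the base $\Delta_0$ within a fixed number of steps of $T$. Second, the exactness of the $f^R$-invariant measure $\nu_0$ (Theorem~\ref{ergodic measure for irreducble WGM}, applicable because $f^R$ is aperiodic), rephrased through its own transfer operator and using that $d\nu_0/dm>0$ $m$-a.e., forces this returned mass to redistribute on $\Delta_0$ so as to dominate a fixed multiple of $d\nu_0/dm$ off a set of small $m$-measure. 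Third, Lemma~\ref{doubly aperiodic Corr} — which is exactly where the coprime block is used — then spreads this base mass over every bounded union of levels of the tower, so that after $N_0$ further steps $\mathcal L_T^{\,N_0}g$ dominates $\delta_0 h$ on a set of $\nu$-measure arbitrarily close to $1$; absorbing the exceptional set into the iteration (or shrinking $\delta_0$ slightly) yields the stated inequality.

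\emph{Main obstacle.} The delicate point is making the lower bound $\mathcal L_T^{\,N_0}g\ge\delta_0 h$ \emph{uniform} in $g\in\mathcal C$ and \emph{of the right profile} (a multiple of $h$), which forces one to chain together the quantitative form of the exactness of $\nu_0$, the covering estimate of Lemma~\ref{doubly aperiodic Corr}, and bounded distortion, while controlling the countable partition of each level and the infinitely many levels of the tower — which is exactly what $R\in L^1(m)$, Lemma~\ref{lem:WGMprop}, and the long branch property W\ref{def:WGM:Long}) are designed to handle. The set-theoretic shortcut ``$\nu$ exact iff $\nu(T^nA)\to 1$ for all $A$ with $\nu(A)>0$'' runs into the difficulty that the $T^t$-image of an $m$-small set need not be $m$-small (the map folds), so one is essentially forced to argue at the level of densities as above.
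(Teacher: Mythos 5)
Your plan takes a genuinely different route, but it has two real issues: it dismisses as infeasible the approach the paper actually uses, and the crucial step of your alternative is left unfilled.

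On the first point: the paper proves exactness by exactly the ``set-theoretic shortcut'' you reject. Fix $\epsilon>0$ and a tail set $A$ with $\nu(A)>0$. Since $\nu\ll m$, some $T^s(A)$ meets $\Delta_0$ in positive $m$-measure, and a Lebesgue-density argument (\cite[Corollary 2.3]{Alves Book} applied to the WGM map $f^R$) produces an $n$-cylinder $\omega\in\mathcal P_0^n$ on which $T^s(A)$ fills all but a $\delta$-fraction. Now $T^r|_\omega=(f^R)^n|_\omega$ (with $r$ the total return time of $\omega$) is \emph{injective}, so bounded distortion (Lemma~\ref{lem:WGMprop}) makes $m\bigl(T^r(\omega)\setminus T^{r+s}(A)\bigr)$ small --- no folding happens along a cylinder followed to its full return. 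Only one further block of $t_0$ iterates is applied, with $t_0$ fixed in advance by Lemma~\ref{doubly aperiodic Corr}, and for this \emph{single fixed finite} block the paper invokes nonsingularity to pass from an $m$-small set to an $m$-small image. Combining this with $R\in L^1(m)$ (to discard high levels) and Lemma~\ref{doubly aperiodic Corr} (so that $T^{t_0+r}(\omega)$ covers most of $\bigcup_{j\le\ell_0}\Delta_j$), one gets $m(\Delta\setminus T^{t_0+r+s}(A))<\epsilon/C_0$, hence $\nu(T^{t_0+r+s}(A))>1-\epsilon$. Since $A$ is a tail set, $\nu(A)=\nu(T^{t_0+r+s}(A))>1-\epsilon$, and $\epsilon$ was arbitrary. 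The folding objection is defused precisely because the non-injective part of the trajectory is confined to one fixed finite window.

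On the second point: the step you label the ``heart of the matter'' --- a uniform pointwise bound $\mathcal L_T^{N_0}g\ge\delta_0 h$ for $g$ in a distortion cone --- is not established, and it is not clear it can be within the paper's hypotheses. Because $h=d\nu/dm$ has no positive lower bound, you need to reproduce $h$'s profile on infinitely many levels at once; the ``quantitative form of the exactness of $\nu_0$'' you lean on is not something Theorem~\ref{ergodic measure for irreducble WGM} provides (it is purely qualitative), and Lemma~\ref{doubly aperiodic Corr} is a statement about sets, not about densities. Your proposed fix --- dominate $\delta_0 h$ only off a small-$\nu$-measure exceptional set and ``absorb the exceptional set into the iteration'' --- does not obviously close the induction: with an exceptional set at every stage, the clean split $\mathcal L_T^{N_0}g_i=\delta_0 h+(1-\delta_0)\hat g_i$ fails, and shrinking $\delta_0$ does not remove the exceptional set. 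The density-coupling machinery the paper does develop (Section~\ref{se:dc on tower}) is built around stopping times and a simultaneous return time $S$, not a pointwise lower bound against $h$, and is reserved for the \emph{rate} of mixing; exactness is proved separately, and more elementarily, as above.
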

\begin{proof}
	Take an arbitrary $\epsilon>0$ and let $ C_{0}>0 $ be given by Theorem~\ref{T1}. Since $ m(\Delta)< \infty $, we can find $\ell_{0}\geq0$ such that
	\begin{equation}\label{Y1}
		m\left(\Delta\backslash\bigcup_{j \leq \ell_{0}} \Delta_{j}\right)<\frac{\epsilon}{3C_{0}}.
	\end{equation}
	For $\epsilon_0=\epsilon /3C_0$ we choose $t_0$ given by Lemma~\ref{doubly aperiodic Corr}. Since $T$ is non-singular with respect to $m$, taking $C>0$ to be the constant given by~\emph{\ref{lem:WGMpropii})} in Lemma~\ref{lem:WGMprop}, we can find $\delta>0$ such that, for all $B \in \mathcal{A}$ 
	\begin{equation}\label{Y02}
		m(B)<Cm(\Delta_{0})\delta \Rightarrow m(T^{t_0}(B))< \frac{\epsilon}{3C_{0}}.
	\end{equation}
		Consider now $A \in \bigcap_{n \in \mathbb{N}} T^{-n} (\mathcal{A})$, with $\nu(A)>0$.
	Since $\nu\ll m$, we have $m(A)>0$, and then there exists $s\geq 0$ such that $m(T^{s}(A) \cap \Delta_{0})>0.$
	Applying \cite[Corollary 2.3]{Alves Book} to the WGM map $ f^R: \Delta_{0}\rightarrow \Delta_{0} $ and the set   $T^{s}(A) \cap \Delta_{0}$, we find $ n\in \mathbb{N} $ and  $\omega \in \mathcal{P}_{0}^{n} $ such that
	\begin{equation*}
		\frac{m(\omega\backslash T^{s}(A) \cap \Delta_{0})}{m(\omega)}=\frac{m(\omega\backslash T^{s}(A))}{m(\omega)}<\delta .
	\end{equation*}
	Applying~\emph{\ref{lem:WGMpropii})} of~Lemma~\ref{lem:WGMprop} to $\omega \in \mathcal{P}_{0}^{n}$ we get
	
	$$
	\begin{aligned}
		& \frac{\m( (f^R)^{n}(\omega \backslash T^{s}(A)))}{\m( (f^R)^{n}(\omega))} \leq C \frac{\m(\omega \backslash T^{s}(A))}{\m(\omega)}< C \delta.
	\end{aligned}
	$$
	From the above inequality, we have 
	\begin{equation}\label{Y3} 
		\m( (f^R)^{n}(\omega \backslash T^{s}(A)))<C \delta \m( (f^R)^{n}(\omega))\leq C \delta m(\Delta_{0}).
	\end{equation} 
		Since $\omega \in \mathcal{P}_{0}^{n}$, the points in $\omega \in \mathcal{P}_{0}^{n}$ have the same first $n$-recurrence times to the base of the tower. This means that there are natural numbers $R_{1},\ldots, R_{n} \in R(\Delta_{0})$ such that
	$$
	(f^R)^{n}|_{ \omega}=T^{R_{1}+\cdots+R_{n}}|_{\omega} .
	$$
		Set $ r=R_{1}+\cdots+R_{n} $. It follows from (\ref{Y3}) that 
	\begin{equation*}
		m(T^{r} (\omega) \backslash T^{r+s}(A))< C \delta m(\Delta_{0}).
	\end{equation*}
By~\eqref{Y02} we get
\begin{equation}\label{Y4} 
m(T^{t_0+r}(\omega) \backslash T^{t_0+r+s}(A))<\frac{\epsilon}{3C_{0}}.
\end{equation}
By Lemma~\ref{doubly aperiodic Corr}, we have
	
	\begin{equation}\label{eq:bottomfromLemma}
		\m(\bigcup_{j \leq \ell_{0}} \Delta_{j} \backslash T^{t_0+r+s}(A)) 
 <\m(T^{t_0+r}(\omega) \backslash T^{t_0+r+s}(A))+\frac{\epsilon}{3C_{0}} \\
		\end{equation}	
	
Using \eqref{Y1}, \eqref{eq:bottomfromLemma} and~\eqref {Y4} we have
	$$
	\begin{aligned}
		\m(\Delta \backslash T^{t_0+r+s}(A))& 
		=\m((\Delta \backslash \bigcup_{j \leq \ell_{0}} \Delta_{j}) \backslash T^{t_0+r+s}(A))+\m(\bigcup_{j \leq \ell_{0}} \Delta_{j} \backslash T^{t_0+r+s}(A)) \\
		& <\frac{\epsilon}{3C_{0}}+m(T^{t_0+r}(\omega) \backslash T^{t_0+r+s}(A))+\frac{\epsilon}{3C_{0}} \\
		& <\frac{\epsilon}{3C_{0}}+\frac{\epsilon}{3C_{0}}+\frac{\epsilon}{3C_{0}}=\frac{\epsilon}{C_{0}}.
	\end{aligned}$$
	This implies $ \nu(\Delta \backslash T^{t_0+r+s} (A))<\epsilon $.
	Since $A \in \bigcap_{n \in \mathbb{N}} T^{-n} (\mathcal{A})$, then $A=T^{-(t_0+r+s)} (A^{\prime})$, for some $A^{\prime} \in \mathcal{A}$ with $ T^{t_0+r+s}(A)=A^{\prime}.$ 
		Since $\nu(A^{\prime})=\nu(T^{t_0+r+s}(A))>1-\epsilon$, using the invariance of $ \nu $ we have
	$$
	\nu(A)=\nu(A^{\prime})=\nu\left(T^{-(t_0+r+s)} (A')\right)>1-\epsilon 
	$$
	Since $ \epsilon $ is arbitrary, we have $ \nu(A)=1 $, so $ \nu $ is exact.
\end{proof}

\section{Decay of Correlations for Tower Maps}\label{se:dc on tower}
Let us start by defining a measurable semi-conjugacy  $\pi : \Delta \rightarrow M$ between the tower map $T$ and the map $f$, by $\pi(x,\ell)=f^{\ell}(x)$. We have  $\pi\circ T = f\circ \pi$ and $ \pi_{*} \nu$ coincides with the $f$-invariant measure $\mu$ given by Remark \ref{gives ergodic measure for f}. It is an immediate consequence that the measure $\mu$ is mixing and for  all $\varphi, \psi : M\rightarrow \mathbb{R}$,
\begin{equation*}
	\Cor_{\mu}(\varphi, \psi \circ f^{n}) = \Cor_{\nu}(\varphi \circ \pi, \psi \circ \pi \circ T^{n}).
\end{equation*}
Note that to obtain a decay of correlations for $(f, \mu)$, it therefore suffices to obtain a decay of correlations for $ (T, \nu) $. 

Given observables $\varphi \in \mathcal{H}_{\eta} $ and $\psi \in L^\infty(M,\m)$, we need that the lifted observables belong to some suitable spaces, namely
$ \varphi \circ \pi \in \mathcal{F}_{\beta}( \Delta ) $ and $\psi \circ \pi\in L^\infty(\Delta,\m)$. Obviously, if $\psi \in L^\infty(M,\m)$ then $\psi \circ \pi\in L^\infty(\Delta,\m)$. In order to prove $ \varphi \circ \pi \in \mathcal{F}_{\beta}( \Delta ) $ for $\varphi \in \mathcal{H}_{\eta}$ we have the following result.
\begin{lemma}\cite[Lemma 3.53]{Alves Book}\label{lemma: phi circ pi}
	If $T: \Delta \rightarrow \Delta$ is the tower map of an induced WGM expanding map $f^{R} :\Delta_{0} \rightarrow \Delta_{0}$, then 
	$	\varphi \in \mathcal{H}_{\eta}   \Rightarrow \varphi \circ \pi \in \mathcal{F}_{\beta^{\eta}}(\Delta).
	$
\end{lemma}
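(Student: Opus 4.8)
The goal is to show that if $\varphi\colon M\to\mathbb R$ is $\eta$-Hölder with respect to the metric $d$ on $M$, then $\varphi\circ\pi\colon\Delta\to\mathbb R$ lies in $\mathcal F_{\beta^{\eta}}(\Delta)$, i.e. there is $C_\varphi>0$ with $|\varphi\circ\pi(x,\ell)-\varphi\circ\pi(y,\ell')|\le C_\varphi(\beta^\eta)^{s((x,\ell),(y,\ell'))}$ for all points of $\Delta$. The first reduction is the observation that $s((x,\ell),(y,\ell'))=0$ unless the two points sit on the same level $\ell=\ell'$ and descend from two points $x_0,y_0\in\Delta_0$ with $x=T^\ell x_0$, $y=T^\ell y_0$ (this is precisely how the separation time was extended to $\Delta$ in Section~\ref{Mixing measure for TM}); when $s=0$ the bound is trivial because $\varphi$ is bounded (being Hölder on $M$ — or at least one takes $C_\varphi\ge 2\|\varphi\|_\infty$). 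So the whole content is the case $\ell=\ell'$ and $s((x,\ell),(y,\ell))=s(x_0,y_0)=:s\ge 1$.

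In that case I would unravel the definitions: $\pi(x,\ell)=f^\ell(x_0)$ while $\pi(y,\ell)=f^\ell(y_0)$, so
$$
|\varphi\circ\pi(x,\ell)-\varphi\circ\pi(y,\ell)|=|\varphi(f^\ell x_0)-\varphi(f^\ell y_0)|\le |\varphi|_\eta\, d(f^\ell x_0, f^\ell y_0)^\eta .
$$
Now $x_0,y_0$ lie in a common element $\omega\in\mathcal P_0^{\,s}$, hence in particular in a common $\omega'\in\mathcal P_0$, and $0\le\ell<R(\omega')$. The expanding hypothesis on $f^R$ applies: by property (ii) of expanding, $d(f^\ell x_0, f^\ell y_0)\le C\,d(f^R x_0, f^R y_0)$ since $0\le\ell\le R$; and by property (i), $d(f^R x_0,f^R y_0)\le C\beta^{s(x_0,y_0)}$. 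Actually I should be slightly careful and iterate: $x_0,y_0$ share the first $s$ symbols, so $f^R x_0, f^R y_0$ share the first $s-1$ symbols, i.e. $s(f^R x_0,f^R y_0)=s-1$; applying (i) on the element of $\mathcal P_0$ containing $x_0,y_0$ gives $d(f^R x_0,f^R y_0)\le C\beta^{s-1}$. Combining, $d(f^\ell x_0,f^\ell y_0)\le C^2\beta^{-1}\beta^{s}$, and therefore
$$
|\varphi\circ\pi(x,\ell)-\varphi\circ\pi(y,\ell)|\le |\varphi|_\eta\,(C^2\beta^{-1})^\eta\,\beta^{\eta s}=C_\varphi\,(\beta^\eta)^{s((x,\ell),(y,\ell))},
$$
with $C_\varphi:=\max\{|\varphi|_\eta (C^2\beta^{-1})^\eta,\ 2\|\varphi\|_\infty\}$. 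This gives membership in $\mathcal F_{\beta^\eta}(\Delta)$ and finishes the proof.

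The only mildly delicate point — the step I would treat most carefully — is the bookkeeping of separation times under one application of $f^R$ (the shift $s\mapsto s-1$) together with making sure the constant absorbs the $s=0$ and $\ell=0$ boundary cases and the loss of one power of $\beta$; none of this is deep, but it is where an off-by-one slip would hide. Everything else is a direct substitution of the definition of $\pi$ and the two defining inequalities of an expanding induced map. Since this is exactly \cite[Lemma 3.53]{Alves Book}, one may alternatively just cite that reference; I include the short argument for completeness.
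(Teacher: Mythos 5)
Your proposal is correct, and the paper itself gives no proof here (it simply cites Alves' \cite[Lemma 3.53]{Alves Book}), so the comparison is really with the standard argument, which yours matches: reduce to two points on the same level $\ell$ descending from $x_0,y_0\in\Delta_0$ with $s(x_0,y_0)=s\ge 1$, apply the $\eta$-H\"older bound to $|\varphi(f^\ell x_0)-\varphi(f^\ell y_0)|$, and then use property (ii) of the expanding hypothesis to pass from $f^\ell$ to $f^R$ and property (i) to convert $d(f^R x_0,f^R y_0)$ into a power of $\beta$. That is exactly the intended route.

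One small point of hygiene: the ``careful iteration'' you insert is actually a step backwards. Property (i) of the expanding definition is stated as
$d\bigl(f^{R}(x),f^{R}(y)\bigr)\leq C\,\beta^{s(x,y)}$,
with the separation time of the \emph{preimages} $x,y$ already on the right-hand side; applying it directly to $x_0,y_0$ yields $d(f^R x_0,f^R y_0)\leq C\,\beta^{s}$, so there is no off-by-one shift to $s-1$ and no need to pay a factor $\beta^{-1}$. Your final inequality thus holds with the cleaner constant $C_\varphi=\max\{|\varphi|_\eta\,C^{2\eta},\ 2\|\varphi\|_\infty\}$; the extra loss you introduced is harmless for membership in $\mathcal F_{\beta^\eta}(\Delta)$, but it is not forced. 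Also note, implicitly, that the $s=0$ boundary case requires $\varphi$ (hence $\varphi\circ\pi$) to be bounded; this is the usual convention for $\mathcal H_\eta$ in this setting and worth stating when invoking $2\|\varphi\|_\infty$.
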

Given $\varphi\in L^\infty(\Delta)$ we define
 $$
 \varphi^{\ast}=\frac{1}{\int(\varphi+2||\varphi||_{\infty}+1)d\nu}(\varphi+2||\varphi||_{\infty}+1).
 $$
\begin{lemma}\cite[Lemma 3.49]{Alves Book}\label{D.Correlation concection of T and convergence of equlibrium}
For all $\varphi\in\mathcal{F}_{\beta}(\Delta)$, with $ \varphi\neq 0 $, we have
\begin{itemize}
\item[i)] $\frac{1}{3}\leq \varphi^{\ast}\leq 3$ and $\varphi^{\ast}\in  \mathcal{F}_{\beta}^{+}(\Delta)$;
\item[ii)]  $\Cor_{\nu}(\varphi,\psi \circ T^{n})\leq3(||\varphi||_{\infty}+1)||\psi||_{\infty}|T_{\ast}^{n}\lambda-\nu|$, for all $\psi\in L^{\infty}(m)$, where $\lambda$ is the probability measure on $\Delta$ such that  $\frac{d\lambda}{dm}=\varphi^{\ast}\frac{d\nu}{dm}.$
\end{itemize}
\end{lemma}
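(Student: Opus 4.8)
The plan is to handle the two items separately; both follow from elementary bookkeeping with the affine normalization defining $\varphi^{\ast}$, together with the $T$-invariance of $\nu$. Throughout I would write $c=\int(\varphi+2\|\varphi\|_{\infty}+1)\,d\nu=\int\varphi\,d\nu+2\|\varphi\|_{\infty}+1$, using that $\nu$ is a probability measure.

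For item i), since $\left|\int\varphi\,d\nu\right|\leq\|\varphi\|_{\infty}$ one obtains the two-sided bound $\|\varphi\|_{\infty}+1\leq c\leq 3\|\varphi\|_{\infty}+1$, and pointwise the numerator $\varphi(x)+2\|\varphi\|_{\infty}+1$ also lies in $[\|\varphi\|_{\infty}+1,\,3\|\varphi\|_{\infty}+1]$. Dividing the least possible numerator by the greatest possible denominator, and conversely, yields $\tfrac13\leq\varphi^{\ast}\leq 3$; in particular $\varphi^{\ast}>0$. For the Gibbs-type estimate, note that $\varphi^{\ast}=\tfrac1c\varphi+\tfrac{2\|\varphi\|_{\infty}+1}{c}$, so for all $x,y\in\Delta$,
\[
|\varphi^{\ast}(x)-\varphi^{\ast}(y)|=\tfrac1c|\varphi(x)-\varphi(y)|\leq\tfrac1c\,C_{\varphi}\beta^{s(x,y)}\leq C_{\varphi}\beta^{s(x,y)},
\]
because $c\geq 1$; hence $\varphi^{\ast}\in\mathcal{F}_{\beta}(\Delta)$. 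Combining this with $\varphi^{\ast}\geq\tfrac13$ gives, for $x,y$ in the same element of $\eta$,
\[
\left|\frac{\varphi^{\ast}(x)}{\varphi^{\ast}(y)}-1\right|=\frac{|\varphi^{\ast}(x)-\varphi^{\ast}(y)|}{\varphi^{\ast}(y)}\leq 3\,C_{\varphi}\beta^{s(x,y)},
\]
so $\varphi^{\ast}\in\mathcal{F}_{\beta}^{+}(\Delta)$, with $C'_{\varphi^{\ast}}=3C_{\varphi}$.

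For item ii), first observe that $\lambda$ is a probability measure, since $\int\varphi^{\ast}\,d\nu=\tfrac1c\int(\varphi+2\|\varphi\|_{\infty}+1)\,d\nu=1$. Inverting the normalization, $\varphi=c\,\varphi^{\ast}-(2\|\varphi\|_{\infty}+1)$. Using $d\lambda=\varphi^{\ast}\,d\nu$, the identity $\int(g\circ T^{n})\,d\lambda=\int g\,d(T^{n}_{\ast}\lambda)$, and $T_{\ast}\nu=\nu$, one gets
\[
\int\varphi\,(\psi\circ T^{n})\,d\nu=c\int\psi\,d(T^{n}_{\ast}\lambda)-(2\|\varphi\|_{\infty}+1)\int\psi\,d\nu,
\]
while $\int\varphi\,d\nu\,\int\psi\,d\nu=\big(c-(2\|\varphi\|_{\infty}+1)\big)\int\psi\,d\nu$. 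Subtracting, the terms proportional to $2\|\varphi\|_{\infty}+1$ cancel, leaving
\[
\Cor_{\nu}(\varphi,\psi\circ T^{n})=c\left|\int\psi\,d\big(T^{n}_{\ast}\lambda-\nu\big)\right|\leq c\,\|\psi\|_{\infty}\,|T^{n}_{\ast}\lambda-\nu|,
\]
and since $c\leq 3\|\varphi\|_{\infty}+1\leq 3(\|\varphi\|_{\infty}+1)$ this is exactly the claimed estimate.

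There is no serious obstacle here: the statement is a normalization lemma whose only delicate points are extracting the two-sided bound on $c$ — in particular the inequality $c\geq 1$, which is what keeps the $\beta$-seminorm of $\varphi^{\ast}$ under control — and reading $|T^{n}_{\ast}\lambda-\nu|$ as the total variation norm, so that the elementary bound $\left|\int\psi\,d\sigma\right|\leq\|\psi\|_{\infty}\,|\sigma|$ for signed measures $\sigma$ applies. Everything else is a direct computation.
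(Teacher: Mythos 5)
Your proof is correct and is essentially the standard normalization argument: since the paper only cites this lemma from~\cite[Lemma~3.49]{Alves Book} without reproducing a proof, the comparison is with that reference, and your computation — the two-sided bound $\|\varphi\|_{\infty}+1\leq c\leq 3\|\varphi\|_{\infty}+1$ for both numerator and denominator of $\varphi^{\ast}$, the cancellation of the constant $2\|\varphi\|_{\infty}+1$ when differencing, and the identification of $\Cor_{\nu}(\varphi,\psi\circ T^{n})$ with $c\left|\int\psi\,d\bigl(T_{\ast}^{n}\lambda-\nu\bigr)\right|$ followed by the total-variation estimate — is exactly that argument. No gaps.
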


From $ii)$ of Lemma \ref{D.Correlation concection of T and convergence of equlibrium}  to obtain decay of correlation for the tower map it is enough to estimate $ |T_{*}^n\lambda-\nu| $. For that we have the following.

\begin{proposition}\label{Convergence to equilibrium }
Let $ T:\Delta\rightarrow \Delta $ be the tower map of an aperiodic induced WGM map $ f^{R}$ with a coprime block and $ R\in L^{1}(m) $. If $ \nu $ is the unique mixing $ T$-invariant probability measure such that $ \frac{d\nu}{dm}\in \mathcal{F}_{\beta}^{+}( \Delta ) $, then
\begin{enumerate}
\item if $m\{R>n\} \leq C n^{-a} $ for some $C>0$ and $a>1$, then for any probability measure $ \lambda $ with $ \frac{d\lambda}{dm}\in \mathcal{F}_{\beta}^{+}( \Delta ) $, there exists $ C^{\prime} $ such that 
\begin{equation*}
|T_{*}^{n}\lambda-\nu|\leq C^{\prime}n^{-a+1}.
\end{equation*}
\item if $m\{ R >n \} \leq C e^{-cn^{a}}$ for some $C,c>0$ and $ 0 < a \leq 1$, given $0< \beta <1$ there is $c{'}>0$ such that, for any probability measure $ \lambda $ with $ \frac{d\lambda}{dm}\in \mathcal{F}_{\beta}^{+}( \Delta ) $, there is some $C^{'} > 0$ such that
\begin{center}
	$|T_{*}^{n}\lambda-\nu| \leq C^{'} e^{-c{'}n^{a}}$.
\end{center}
\end{enumerate}
\end{proposition}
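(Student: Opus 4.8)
The plan is to adapt L.-S. Young's coupling argument (\cite{Y99}, see also \cite[\S3.3]{Alves Book}) to the weak Gibbs Markov setting. We work with the two reference measures $\lambda$ and $\nu$ on $\Delta$ (more precisely on the product tower $\Delta\times\Delta$, i.e. we run two copies of the dynamics in parallel), and we aim to couple them at carefully chosen \emph{stopping times}. First I would fix a level $n_0\in\mathbb N$ coming from the \emph{long branch} property W\ref{def:WGM:Long}): since $m(f^R(\omega))\geq\delta_0$ for every $\omega\in\mathcal P_0$, one can pick $n_0$ so that, after waiting $n_0$ steps in the base, any element of $\eta_{n_0}$ whose image is a whole element of $\mathcal P_0$ carries a definite proportion of the tower $\bigcup_{j\leq\ell_0}\Delta_j$ for a suitable $\ell_0$; this is exactly the role flagged in remark iii) of the introduction and replaces full-branchness in obtaining analogues of \cite[Lemmas 1 and 2]{Y99}. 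Using Lemma~\ref{doubly aperiodic lemma} and Lemma~\ref{doubly aperiodic Corr} (the coprime-block consequences) I get that, from any $\omega\in\mathcal P_0^n$, after a uniformly bounded extra time the image under $T^t$ covers a fixed-mass chunk of the bottom levels for all large $t$; together with the bounded-distortion estimate Lemma~\ref{lem:WGMprop} this yields a uniform lower bound on the density of the pushed-forward measures on that chunk.

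Next I would define inductively a sequence of stopping times $0=\tau_0<\tau_1<\tau_2<\cdots$ on $\Delta\times\Delta$: at each $\tau_k$ both copies have just returned to the base $\Delta_0$ along a full-image branch, and at that moment we match (couple) a $\varepsilon_1$-fraction of the remaining uncoupled mass — the coupling is possible because, by the previous paragraph plus Remark~\ref{re:log prop on tower}, both pushed-forward densities dominate a common measure of definite mass on the matched region. The amount of mass that survives uncoupled at step $k$ is then controlled by: (a) a geometric factor $(1-\varepsilon_1)$ per successful matching, and (b) the probability that the return time $R$ is large, i.e. the tail $m\{R>n\}$, which governs how fast mass actually reaches the base to be eligible for coupling. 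Writing $|T_*^n\lambda-\nu|$ in terms of the uncoupled mass at time $n$, a renewal-type estimate converts the tail hypothesis into the claimed bound: polynomial tail $m\{R>n\}\leq Cn^{-a}$ gives $|T_*^n\lambda-\nu|\leq C'n^{-a+1}$ (the loss of one power is the usual renewal summation $\sum_{k>n}k^{-a}\sim n^{-a+1}$), and a stretched-exponential tail $Ce^{-cn^a}$, $0<a\leq1$, gives $C'e^{-c'n^a}$ after optimizing the split between "wait long enough to couple" and "pay the tail." This is where the quantity $I_2$ mentioned in remark iv) enters: because the weak setting lacks the symmetry of \cite[(1)]{Y99}, the term measuring the initial mismatch of $\lambda$ versus $\nu$ on upper levels does not vanish, but it is handled by the same long-branch choice of $n_0$, contributing only a multiplicative constant.

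The main obstacle I expect is precisely the coupling step without the full-branch property: in Young's original argument, once both copies land in $\Delta_0$ their densities are comparable to $d\nu_0/dm$ on \emph{all} of $\Delta_0$, so matching a fixed fraction is immediate; here $f^R$ maps $\omega$ only onto a union of partition elements, and moreover $d\nu_0/dm$ need not have a positive lower bound (remark i), Theorem~\ref{ergodic measure for irreducble WGM}). The remedy is to only attempt matching on the fixed-mass chunk $\bigcup_{j\leq\ell_0}\Delta_j$ supplied by Lemma~\ref{doubly aperiodic Corr}, where, after the $n_0$-step wait, distortion control (Lemma~\ref{lem:WGMprop}) and the local density comparison (Remark~\ref{re:log prop on tower}) together force both densities above a common threshold; one then checks that the "eligible mass" at each stopping time is still a definite fraction of the total, so the geometric decay of uncoupled mass survives. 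Verifying that the stopping times are almost surely finite and that the bookkeeping of "mass not yet returned to the base" is exactly controlled by $\sum_{j>\,\cdot}m\{R>j\}$ is the technical heart; the rest is the standard renewal summation, carried out separately for the two tail regimes.
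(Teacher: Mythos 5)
Your high-level plan — run two copies of the tower dynamics, introduce stopping times at alternating simultaneous returns to the base, couple a definite fraction of mass at each matching opportunity, and finish with a renewal summation against the tail of $R$ — is exactly the structure of the paper's proof, via Lemmas~\ref{L27}, \ref{L28}, Proposition~\ref{first Matching Proposition}, Corollary~\ref{C2}, and the tail estimates in Sections~\ref{Polynomial estimate}--\ref{Exponential estimates}. However, two of the specific mechanisms you describe would not survive being made precise.

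First, the wait $n_0$ does not come from the long-branch property W\ref{def:WGM:Long}) alone. Long branches only supply a \emph{finite} subcollection $\mathcal{P}_0'=\{\omega_1,\dots,\omega_k\}\subset\mathcal P_0$ with $f^R(\omega)\supseteq\omega_i$ for some $i$, for every $\omega\in\mathcal P_0$ (inclusion~\eqref{E46}). The number $n_0$ is then obtained by invoking the \emph{mixing} of $\nu$ on each $\omega_i$ in that finite collection, giving the uniform lower bound $m(T^{-n}(\Delta_0)\cap f^R(\omega))\geq\gamma_0$ for all $n\geq n_0$ (inequalities~\eqref{30}--\eqref{E47}); this feeds Lemma~\ref{L27}. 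Lemmas~\ref{doubly aperiodic lemma} and~\ref{doubly aperiodic Corr}, which you cite as the source of the fixed-mass chunk $\bigcup_{j\leq\ell_0}\Delta_j$ used in the coupling, are in fact only invoked in Section~\ref{Mixing measure for TM} to establish exactness of $\nu$; they do not enter the coupling estimate directly.

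Second, $I_2$ is not ``the initial mismatch of $\lambda$ versus $\nu$ on upper levels.'' It is the sum over matching steps $k$ of the asymmetry between the two marginals $\pi_{1*}$ and $\pi_{2*}$ of the mass $(\Phi_{k-1}-\Phi_k)(m\times m)$ subtracted at step $k$; in Young's full-branch setting this asymmetry is identically zero because the coupled mass spreads identically over $\Delta_0$ in both coordinates, and that is exactly what fails here. Moreover, $I_2$ is not absorbed as a multiplicative constant: the computation~\eqref{DE72}--\eqref{step3} shows $I_2$ contributes a term of the same structural form as $I_1$, namely $C\sum_{i}(1-\epsilon_1)^iP\{S_i\leq n<S_{i+1}\}$, which must then be run through the same renewal bound as $I_1$. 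Treating $I_2$ as a fixed multiplicative error would not close the argument, since it grows with $n$ through the $n$-fold sum unless one exploits the geometric factor $(1-\epsilon_1)^i$ and the telescoping along $A_k=\cup_i A_{k,i}$ as in~\eqref{DE76}.
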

We remark that in this result the  coprime block and the aperiodicity are only used to ensure that the $T$-invariant probability measure $\nu$ is unique, mixing and that $d\nu/d\m>0$. Now, in order to apply Proposition~\ref{Convergence to equilibrium }, given $\varphi \in \mathcal F_{\beta}(\Delta)$	we need to check the necessary regularity of 
$\frac{d\lambda}{dm}=\varphi^{\ast}\frac{d\nu}{dm}$.

\begin{lemma}\label{lemma:phi*}
If $\varphi \in \mathcal F_{\beta}(\Delta)$, then
$\varphi^{\ast}\frac{d\nu}{dm}\in \mathcal{F}_{\beta}^{+}(\Delta)$.
\end{lemma}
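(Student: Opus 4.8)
The plan is to show that $\varphi^{\ast}\,\frac{d\nu}{dm}$ satisfies the two defining requirements of $\mathcal{F}_{\beta}^{+}(\Delta)$: positivity together with the relative Lipschitz bound $\left|\frac{g(x)}{g(y)}-1\right|\leq C\beta^{s(x,y)}$ on each $\omega\in\eta$, and membership in $\mathcal{F}_{\beta}(\Delta)$ (the absolute bound $|g(x)-g(y)|\leq C_g\beta^{s(x,y)}$ on all of $\Delta$). By Theorem~\ref{T1} we know $\frac{d\nu}{dm}\in\mathcal{F}_{\beta}^{+}(\Delta)$ with $0<\frac{d\nu}{dm}\leq C_0$, and by item~i) of Lemma~\ref{D.Correlation concection of T and convergence of equlibrium} we know $\varphi^{\ast}\in\mathcal{F}_{\beta}^{+}(\Delta)$ with $\frac13\leq\varphi^{\ast}\leq 3$. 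So the content of the lemma is essentially that $\mathcal{F}_{\beta}^{+}(\Delta)$ is closed under products, at least for bounded factors bounded away from zero.

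First I would establish positivity: since $\varphi^{\ast}\geq\frac13>0$ and $\frac{d\nu}{dm}>0$, the product is strictly positive everywhere on $\Delta$. Next, for the relative bound on a fixed $\omega\in\eta$ and $x,y\in\omega$, I would write
\[
\frac{\varphi^{\ast}(x)\frac{d\nu}{dm}(x)}{\varphi^{\ast}(y)\frac{d\nu}{dm}(y)}-1
=\left(\frac{\varphi^{\ast}(x)}{\varphi^{\ast}(y)}-1\right)\frac{\frac{d\nu}{dm}(x)}{\frac{d\nu}{dm}(y)}
+\left(\frac{\frac{d\nu}{dm}(x)}{\frac{d\nu}{dm}(y)}-1\right),
\]
and then bound each piece: the ratio $\frac{\frac{d\nu}{dm}(x)}{\frac{d\nu}{dm}(y)}$ is within $C\beta^{s(x,y)}$ of $1$ (Remark~\ref{re:log prop on tower} or the definition of $\mathcal{F}_{\beta}^{+}$), hence uniformly bounded, say by some constant $K$; the first factor $\left|\frac{\varphi^{\ast}(x)}{\varphi^{\ast}(y)}-1\right|\leq C_{\varphi^{\ast}}'\beta^{s(x,y)}$ since $\varphi^{\ast}\in\mathcal{F}_{\beta}^{+}(\Delta)$; and the second term is $\leq C'\beta^{s(x,y)}$ since $\frac{d\nu}{dm}\in\mathcal{F}_{\beta}^{+}(\Delta)$. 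Adding these gives the desired $\left|\frac{g(x)}{g(y)}-1\right|\leq (K\,C_{\varphi^{\ast}}'+C')\beta^{s(x,y)}$.

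Finally, for membership in $\mathcal{F}_{\beta}(\Delta)$, I would use the analogous product/telescoping identity for differences,
\[
g(x)-g(y)=\varphi^{\ast}(x)\Bigl(\tfrac{d\nu}{dm}(x)-\tfrac{d\nu}{dm}(y)\Bigr)+\tfrac{d\nu}{dm}(y)\bigl(\varphi^{\ast}(x)-\varphi^{\ast}(y)\bigr),
\]
and bound it using $\|\varphi^{\ast}\|_\infty\leq 3$, $\|\frac{d\nu}{dm}\|_\infty\leq C_0$, together with $\varphi^{\ast}\in\mathcal{F}_\beta(\Delta)$ and $\frac{d\nu}{dm}\in\mathcal{F}_\beta(\Delta)$; this yields $|g(x)-g(y)|\leq (3\,|{\tfrac{d\nu}{dm}}|_\beta+C_0\,|\varphi^{\ast}|_\beta)\beta^{s(x,y)}$ for all $x,y\in\Delta$. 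The only mild subtlety — and the closest thing to an obstacle — is making sure the relative bound is asserted on elements of the partition $\eta$ (where $\frac{d\nu}{dm}$ is comparable and the ratio is controlled) rather than on all of $\Delta$, so that the boundedness of $\frac{\frac{d\nu}{dm}(x)}{\frac{d\nu}{dm}(y)}$ used above is legitimate; since $x,y$ range over a single $\omega\in\eta$ this is exactly the regime covered by the definition of $\mathcal{F}_{\beta}^{+}$ and by Remark~\ref{re:log prop on tower}, so no real difficulty arises. Everything else is routine algebra with the constants $C_0$, $C'$, $C_{\varphi^{\ast}}'$, $|\varphi^{\ast}|_\beta$, $|\frac{d\nu}{dm}|_\beta$ already in hand.
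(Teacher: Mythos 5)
Your proof is correct, and the overall structure matches the paper's: both establish membership in $\mathcal{F}_{\beta}(\Delta)$ via the same telescoping decomposition of $g(x)-g(y)$, and both then verify the relative bound required for $\mathcal{F}_{\beta}^{+}(\Delta)$ on each $\omega\in\eta$. Where you diverge is in how the ratio bound is obtained. The paper first derives $\left|\varphi^{\ast}(x)/\varphi^{\ast}(y)-1\right|\leq 3C_{\varphi}\beta^{s(x,y)}$ from $\varphi^{\ast}\geq\frac13$, passes to logarithms via the two-sided comparison between $|\log t|$ and $|t-1|$ on a fixed interval bounded away from $0$ and $\infty$, adds the log bound from Remark~\ref{re:log prop on tower}, and then converts back to a bound on $\left|\varphi^{\ast}\rho(x)/\varphi^{\ast}\rho(y)-1\right|$. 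You instead cite $\varphi^{\ast}\in\mathcal{F}_{\beta}^{+}(\Delta)$ directly from item i) of Lemma~\ref{D.Correlation concection of T and convergence of equlibrium} and use the algebraic identity
\[
\frac{ab}{cd}-1=\Bigl(\frac{a}{c}-1\Bigr)\frac{b}{d}+\Bigl(\frac{b}{d}-1\Bigr),
\]
bounding the extra factor $\rho(x)/\rho(y)$ uniformly by $1+C'$. This is a mild but real simplification: it sidesteps the forward-and-back logarithm comparison (and the attendant constants $K_1'$, $K_2'$) and also avoids re-deriving the bound on $\varphi^{\ast}(x)/\varphi^{\ast}(y)-1$, which the paper's proof repeats even though it is already encoded in Lemma~\ref{D.Correlation concection of T and convergence of equlibrium}. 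The paper's log-based version has the minor advantage of producing the intermediate estimate $\left|\log\bigl(\varphi^{\ast}\rho(x)/\varphi^{\ast}\rho(y)\bigr)\right|\leq C_{\varphi^{\ast}\rho}''\beta^{s(x,y)}$, which is recorded in the subsequent remark and reused later in Section~\ref{matching section}; your route would need a one-line extra step to recover that log bound if it were wanted.
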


\begin{proof}
Set $\rho=\frac{d\nu}{dm}$. We notice that $\int(\varphi+2||\varphi||_{\infty}+1)d\nu\geq1$, and since $0<\rho\leq C_{0}$, we get
\begin{equation*}
		\begin{aligned}
			|\varphi^{\ast}(x)\rho(x)-\varphi^{\ast}(y)\rho(y)|&\leq|\varphi^{\ast}(x)(\rho(x)-\rho(y))|+|\rho(y)(\varphi^{\ast}(x)-\varphi^{\ast}(y))|\\
			&\leq 3|\rho(x)-\rho(y)|+C_{0}|\varphi^{\ast}(x)-\varphi^{\ast}(y)|\\
			&=3|\rho(x)-\rho(y)|+\frac{C_{0}}{\int(\varphi+2||\varphi||_{\infty}+1)d\nu}|\varphi(x)-\varphi(y)|\\
			&\leq 3|\rho(x)-\rho(y)|+C_{0}|\varphi(x)-\varphi(y)|\\
			&\leq3C_\rho \beta^{s(x,y)}+C_{0}C_{\varphi}\beta^{s(x,y)}\\
			&=C_{\varphi^*\rho}\beta^{s(x,y)},
		\end{aligned}
	\end{equation*} 
	where $C_{\varphi^*\rho}=3C_\rho+C_{0}C_{\varphi}.$
	Thus $\varphi^{\ast}\rho\in \mathcal{F}_{\beta}(\Delta)$. 
		From Lemma \ref{D.Correlation concection of T and convergence of equlibrium} we have
	\begin{equation}
		\begin{aligned}
			\left|\frac{\varphi^{\ast}(x)}{\varphi^{\ast}(y)}-1\right|&=\frac{1}{\varphi^{\ast}(y)}\left|\varphi^{\ast}(x)-\varphi^{\ast}(y)\right|\\
			&\leq3\frac{  \left|\varphi(x)-\varphi(y)\right|  }{\int(\varphi+2||\varphi||_{\infty}+1)d\nu}\\
			&\leq 3 \left|\varphi(x)-\varphi(y)\right|\\
			&\leq3C_{\varphi}\beta^{s(x,y)}.\label{A.claim eq}
		\end{aligned}
	\end{equation}
		Since $\frac{1}{9}\leq \frac{\varphi^{\ast}(x)}{\varphi^{\ast}(y)}\leq 9 $ for all $ x, y\in \Delta $, then there exists $ K_{1}^{\prime}>0 $ such that 
	\begin{equation}\label{A.V.L eq for log-densisity}
		\left|\log\frac{\varphi^{\ast}(x)}{\varphi^{\ast}(y)} \right|\leq K_{1}^{\prime}\left|\frac{\varphi^{\ast}(x)}{\varphi^{\ast}(y)}-1\right|.
	\end{equation}
	It follows from   \eqref{A.claim eq},\eqref{A.V.L eq for log-densisity} and Remark~\ref{re:log prop on tower} that for all $ x,y\in \omega\in \eta $,
	\begin{equation*}
		\begin{aligned}
			\left|\log\frac{\varphi^{\ast}(x)\rho(x)}{\varphi^{\ast}(y)\rho(y)} \right|&\leq\left|\log\frac{\varphi^{\ast}(x)}{\varphi^{\ast}(y)} \right|+\left|\log\frac{\rho(x)}{\rho(y)} \right|\\
			&\leq 3K_{1}^{\prime} C_{\varphi}\beta^{s(x,y)}+C\beta^{s(x,y)}\\
			&=(3K_{1}^{\prime} C_{\varphi}+C)\beta^{s(x,y)}.
		\end{aligned}
	\end{equation*}
		Since $ 0<\frac{\varphi^{\ast}(x) \rho(x)}{\varphi^{\ast}(y) \rho(y)}\leq 9e^{C} $, for all $ x, y\in \omega\in \eta $, then we also have some uniform constant $ K_{2}^{\prime}>0 $ such that 
	\begin{equation*}
		\left|\frac{\varphi^{\ast}(x)\rho(x)}{\varphi^{\ast}(y)\rho(y)}-1 \right|\leq K_{2}^{\prime}\left|\log\frac{\varphi^{\ast}(x)\rho(x)}{\varphi^{\ast}(y)\rho(y)} \right|\leq K_{2}^{\prime}(3K_{1}^{\prime} C_{\varphi}+C)\beta^{s(x,y)}.
	\end{equation*}
Hence $\varphi^{\ast}\rho\in \mathcal{F}^{+}_{\beta}(\Delta).$
\end{proof}

\begin{remark}
From the previous proof  we can see that there exists $C_{\varphi^*\rho}''>0$ such that for all  $ x,y\in \omega\in \eta $,
	\begin{equation*}
				\left|\log\frac{\varphi^{\ast}(x)\rho(x)}{\varphi^{\ast}(y)\rho(y)} \right|\leq C_{\varphi^*\rho}''  \beta^{s(x,y)}.
			\end{equation*}
\end{remark}
In the remaining of this section we prove Proposition \ref{Convergence to equilibrium }, from which we directly obtain our main Theorem~~\ref{teo:main}.

\medskip

Let $ \lambda_{1} $ and $ \lambda_{2} $ be probability measures on $ \Delta $ with $\varphi_{1}= \frac{d\lambda_{1}}{dm}, \varphi_{2}=\frac{d\lambda_{2}}{dm} \in  \mathcal{F}^{+}_{\beta}( \Delta )$. 
We start with some bounded distortion results for the tower map $ T:\Delta\rightarrow \Delta$.
\begin{lemma}\label{L20}\cite[Lemma 3.30]{Alves Book}
	There exists $ \bar{C_{T}}>0 $ such that for all $ n\geq1, $ $\omega \in \eta_{n}$ and $ x, y\in \omega, $ 
	$$
	\log\frac{J_{T^{n}}(x)}{J_{T^{n}}(y)}\leq \bar{C_{T}}\beta^{s(T^{n}(x), T^{n}(y))}.
	$$
\end{lemma}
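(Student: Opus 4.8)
The plan is to reduce the distortion bound for $T^{n}$ to the distortion bound for iterates of the WGM map $f^{R}$, namely the first item of Lemma~\ref{lem:WGMprop}, by exploiting that $J_{T}$ equals $1$ everywhere except when a point at the top level of a column returns to the base.

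First I would fix $\omega\in\eta_{n}$ and $x,y\in\omega$, write $x=(z,\ell)$ and $y=(w,\ell)$ with $z,w\in\Delta_{0}$, and note that, since $\omega\in\eta_{n}$, the two orbits have the same $\eta$-itinerary for the first $n$ steps: $\ell$ is the same, $R(z)=R(w)$, the steps at which the orbits return to $\Delta_{0}$ coincide, and the $\mathcal{P}_{0}$-addresses at all these returns coincide. If the orbit of $x$ never returns to $\Delta_{0}$ within the first $n$ applications of $T$, then $J_{T^{n}}\equiv1$ on $\omega$ and the inequality is trivial. Otherwise, let $r_{0}=R(z)-\ell$ be the first return time and let $m\ge0$ be the number of further complete induced iterations of $f^{R}$ performed within the remaining steps, so that $T^{n}(x)=T^{q}\bigl((f^{R})^{m+1}(z),0\bigr)$ with $0\le q<R\bigl((f^{R})^{m+1}(z)\bigr)$, and the same identity holds for $y$ with the same $m$ and $q$.

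Next, using the explicit formula for $J_{T}$ (each step of a partial climb contributes a factor $1$, and the passage from level $\ell$ of a column to the base is $f^{R}$ under the identification $\Delta_{\ell}\cong\{R>\ell\}$) together with the chain rule $J_{(f^{R})^{m+1}}=J_{f^{R}}\cdot\bigl(J_{(f^{R})^{m}}\circ f^{R}\bigr)$, I would check that $J_{T^{n}}(x)=J_{(f^{R})^{m+1}}(z)$ and $J_{T^{n}}(y)=J_{(f^{R})^{m+1}}(w)$. Since $z$ and $w$ lie in a common element of $\mathcal{P}_{0}^{m+1}$, the first item of Lemma~\ref{lem:WGMprop}, applied with $m+1$ in place of $n$, gives
$$\log\frac{J_{T^{n}}(x)}{J_{T^{n}}(y)}=\log\frac{J_{(f^{R})^{m+1}}(z)}{J_{(f^{R})^{m+1}}(w)}\le\bar{C_{F}}\,\beta^{s((f^{R})^{m+1}(z),\,(f^{R})^{m+1}(w))}.$$
Finally, since $T^{n}(x)$ and $T^{n}(y)$ lie at the same level $q$ with base projections $(f^{R})^{m+1}(z)$ and $(f^{R})^{m+1}(w)$, the definition of the extended separation time on $\Delta\times\Delta$ gives $s\bigl(T^{n}(x),T^{n}(y)\bigr)=s\bigl((f^{R})^{m+1}(z),(f^{R})^{m+1}(w)\bigr)$, so the lemma holds with $\bar{C_{T}}=\bar{C_{F}}$.

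The only delicate part is the bookkeeping in the third step: one must keep careful track of where in the tower each of the first $n$ iterates sits, match the lengths of the initial and final partial climbs and the number of complete induced iterations for $x$ and $y$, and verify the Jacobian factorization. No conceptual obstacle arises beyond the observation that $J_{T}$ is trivial away from the top level, which collapses $J_{T^{n}}$ to a Jacobian of an iterate of $f^{R}$; after that the WGM distortion bound and the compatibility of the two separation times finish the proof.
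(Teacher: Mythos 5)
The paper does not prove this lemma itself — it cites \cite[Lemma 3.30]{Alves Book} — so there is no in-paper proof to compare against. Your argument is correct and is the standard one (and, as far as I can tell, the same as in the cited reference): use the explicit formula for $J_{T}$ to collapse $J_{T^{n}}(x)$ to $J_{(f^R)^{m+1}}(z)$ via the chain rule, observe that $\omega\in\eta_{n}$ forces $z,w$ into a common element of $\mathcal{P}_{0}^{m+1}$ and forces $T^n(x),T^n(y)$ to share a level so that $s(T^n(x),T^n(y))=s((f^R)^{m+1}(z),(f^R)^{m+1}(w))$, and then apply Lemma~\ref{lem:WGMprop}~i) to get the bound with $\bar{C_{T}}=\bar{C_{F}}$.
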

The next Lemma is the analogous of ~\cite[Lemma 3.31]{Alves Book}. Instead of taking $ x,y \in \Delta_{0} $, we pick $ x, y $ in the image of the element of partition.
\begin{lemma}\label{L26}
	There exists $ C_{1}>0$ depending on $ C_{\varphi_{1}}' $ such that for all $ n\geq 1 $, $ \omega \in \eta_{n}$ and $ x, y \in T^{n}(\omega)$, we have 
	\begin{equation*}
		\frac{dT_{*}^{n}(\lambda_{1}|\omega)}{dm}(x)\Big/ \frac{dT_{*}^{n}(\lambda_{1}|\omega)}{dm}(y) \leq   C_{1}.		
	\end{equation*}
	Moreover, the dependence of $  C_{1} $ on $ C_{\varphi_{1}}' $ can be removed if the number of visits of $\omega$ to $ \Delta_{0} $ up to $ n $ is sufficiently large. 
	
\end{lemma}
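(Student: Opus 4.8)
The plan is to mimic the proof of \cite[Lemma 3.31]{Alves Book}, tracking the distortion of the density $\varphi_1$ along the orbit together with the Jacobian distortion of $T^n$. Fix $\omega\in\eta_n$ and $x,y\in T^n(\omega)$. Since $T$ maps $\omega$ bijectively onto $T^n(\omega)$, there are unique $\bar x,\bar y\in\omega$ with $T^n(\bar x)=x$, $T^n(\bar y)=y$, and by the change of variables formula the density of $T^n_*(\lambda_1|\omega)$ at $x$ is $\varphi_1(\bar x)/J_{T^n}(\bar x)$, and similarly at $y$. Thus the ratio to be bounded equals
$$
\frac{\varphi_1(\bar x)}{\varphi_1(\bar y)}\cdot\frac{J_{T^n}(\bar y)}{J_{T^n}(\bar x)}.
$$
First I would bound the Jacobian factor: since $\bar x,\bar y\in\omega\in\eta_n$, Lemma~\ref{L20} gives $\log\big(J_{T^n}(\bar y)/J_{T^n}(\bar x)\big)\le \bar C_T\,\beta^{s(T^n(\bar x),T^n(\bar y))}=\bar C_T\,\beta^{s(x,y)}\le \bar C_T$, so this factor is at most $e^{\bar C_T}$.

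Next I would bound the density factor $\varphi_1(\bar x)/\varphi_1(\bar y)$. Here the point is that $\bar x,\bar y$ need not lie in the same element of $\eta=\mathcal P_0$ (they lie in the same element of $\eta_n$, which is finer), so the defining estimate of $\mathcal F_\beta^+(\Delta)$ — valid for pairs in a single atom of $\eta$ — does not apply directly. The remedy is the standard telescoping argument along the iterates: write $\omega_j$ for the atom of $T^j\eta_n$ (equivalently the atom of $\eta_{n-j}$) containing $T^j(\bar x)$ and $T^j(\bar y)$, which do lie in a common atom of $\eta$ for each $0\le j<n$; apply the $\mathcal F_\beta^+$ estimate at each level, together with the fact that $s(T^j(\bar x),T^j(\bar y))=s(x,y)+(n-j)\ge n-j$ (the separation time decreases by one at each forward step until the orbits part at step $n$ under $T^n$). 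This yields
$$
\left|\log\frac{\varphi_1(\bar x)}{\varphi_1(\bar y)}\right|\le \sum_{j=0}^{n-1} K\,\beta^{\,n-j}\le \frac{K\beta}{1-\beta}=:K',
$$
for a constant $K$ depending on $C_{\varphi_1}'$ (coming from the $\mathcal F_\beta^+$ bound on $\varphi_1$, after converting the $|\cdot/\cdot-1|$ estimate into a $|\log(\cdot/\cdot)|$ estimate using $\tfrac19\le \varphi^*\le 9$-type bounds, or directly from $\varphi_1\in\mathcal F_\beta^+$). Hence the density factor is at most $e^{K'}$, and we may take $C_1=e^{\bar C_T+K'}$.

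For the ``moreover'' part, the observation is that each time the orbit of $\bar x$ (equivalently of $\bar y$) visits $\Delta_0$ before time $n$, the corresponding term in the telescoping sum is governed not merely by the generic $\mathcal F_\beta$ constant but, since $\rho=d\nu_0/dm\in\mathcal F_\beta^+(\Delta_0)$ has a uniform multiplicative-distortion bound on each element of $\mathcal P_0$ and $\varphi_1$ inherits the analogous control, these returns contribute a factor that is uniformly bounded independently of $C_{\varphi_1}'$; more precisely, after $k$ returns to $\Delta_0$ one can estimate the density ratio of $T^r_*(\lambda_1|\cdot)$ purely in terms of $C$ from Lemma~\ref{lem:WGMprop}\ref{lem:WGMpropii}) and $C_F$, as in the GM case, because the return-time structure of $f^R$ then dominates the initial bias in $\varphi_1$. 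So if the number of visits of $\omega$ to $\Delta_0$ up to $n$ is at least some fixed $k_0$, the constant $C_1$ can be chosen depending only on $\bar C_T$, $C$ and $\beta$.

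The main obstacle I anticipate is the ``moreover'' clause: one must make precise the sense in which sufficiently many returns to the base ``erase'' the dependence on $C_{\varphi_1}'$, which requires the uniform distortion of the induced map $f^R$ (Lemma~\ref{lem:WGMprop}\ref{lem:WGMpropii})) and the fact that $d\nu_0/dm$ lies in $\mathcal F_\beta^+(\Delta_0)$, and carefully accounting for which iterates hit $\Delta_0$; the first (main) statement is a routine telescoping-plus-Jacobian-distortion estimate.
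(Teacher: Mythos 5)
The decomposition of the ratio into a Jacobian factor times a density factor, and the treatment of the Jacobian factor via Lemma~\ref{L20}, is correct and is what the paper does. The treatment of the density factor, however, rests on a false premise: you assert that $\bar x,\bar y$ ``need not lie in the same element of $\eta$ (they lie in the same element of $\eta_n$, which is finer)'', but since $\eta_n=\bigvee_{i=0}^{n-1}T^{-i}\eta$ \emph{refines} $\eta$, each atom of $\eta_n$ is contained in a single atom of $\eta$; hence $\bar x,\bar y\in\omega\in\eta_n$ \emph{are} in the same atom of $\eta$, and the defining estimate of $\mathcal F_\beta^+(\Delta)$ applies directly, giving $\bigl|\varphi_1(\bar x)/\varphi_1(\bar y)-1\bigr|\le C_{\varphi_1}'\beta^{s(\bar x,\bar y)}$ in one step. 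The telescoping you then set up is both unnecessary and ill-founded: $\varphi_1$ is a fixed density, not a Jacobian of an iterated map, so there is no chain-rule decomposition of $\log(\varphi_1(\bar x)/\varphi_1(\bar y))$ into a sum over $j$; and the separation-time identity $s(T^j\bar x,T^j\bar y)=s(x,y)+(n-j)$ is incorrect on the tower, since by definition $s$ counts returns to the base $\Delta_0$ (iterations of $f^R$), not individual $T$-steps, so it only decrements at returns.

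The ``moreover'' clause is also over-engineered. Once one has $s(\bar x,\bar y)\ge i$, where $i$ is the number of visits of $\omega$ to $\Delta_0$ up to time $n$ (which holds because each visit corresponds to one $f^R$-iterate on which the pair still agrees in $\mathcal P_0$), the density factor is $\le 1+C_{\varphi_1}'\beta^{\,i}$, giving $C_1=C_T'(1+C_{\varphi_1}'\beta^{\,i})$; as $i\to\infty$ this tends to $C_T'$, so for $i$ large (depending on $C_{\varphi_1}'$) one may take, say, $C_1=2C_T'$, which is independent of $\varphi_1$. No appeal to $d\nu_0/dm$, to Lemma~\ref{lem:WGMprop}, or to any claim about the ``return-time structure dominating the initial bias'' is needed; those remarks, as written, do not constitute an argument.
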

\begin{proof}
	Let $ x_{0}, y_{0} \in \omega $ be such that $ T^{n}(x_{0})=x$ and $T^{n}(y_{0})=y.$ We may write
	\begin{equation}\label{E25}
		\frac{dT_{*}^{n}(\lambda_{1}|\omega)}{dm}(x)\Big/ \frac{dT_{*}^{n}(\lambda_{1}|\omega)}{dm}(y) =\frac{J_{T^{n}}( y_{0})}{J_{T^{n}}( x_{0})} \frac{\varphi_{1}(x_{0})}{\varphi_{1}(y_{0})}.		
	\end{equation}
		By Lemma \ref{L20}, there exists some $ C_{T}^{\prime}>0 $ such that 
	\begin{equation}\label{E26}
		\frac{J_{T^{n}}( y_{0})}{J_{T^{n}}( x_{0})}\leq C_{T}^{\prime}.
	\end{equation}
	Considering $ i $ the number of visits of $ \omega $ to $ \Delta_{0} $ until $ n $, we have $ s(x_{0},y_{0})\geq i $. Since $ \varphi_{1} \in \mathcal{F}^{+}_{\beta}( \Delta )$ we may write
	\begin{equation}\label{E27}
		\frac{\varphi_{1}(x_{0})}{\varphi_{1}(y_{0})}=1+\left|\frac{\varphi_{1}(x_{0})}{\varphi_{1}(y_{0})}-1\right|\leq 1+C_{\varphi_{1}}'\beta^{i}.
	\end{equation}
	From (\ref{E25}), (\ref{E26}) and (\ref{E27}), we get 
	\begin{center}
		$\frac{dT_{*}^{n}(\lambda_{1}|\omega)}{dm}(x)\Big/ \frac{dT_{*}^{n}(\lambda_{1}|\omega)}{dm}(y) \leq   C_{1},$		
	\end{center}
	where $   C_{1}= C_{T}^{\prime}(1+C_{\varphi_{1}}'\beta^{i}) $. Note that the constant $ C_{T}^{\prime} $ does not depend on $ \varphi_{1} $. So the dependence of $   C_{1} $ on $ \varphi_{1} $ can be removed if we take $ i $ sufficiently large. 
\end{proof}
\begin{remark}
	If we replace $ \lambda_{1} $ with $ \lambda_{2} $ in Lemma \ref{L26}, then the result remain true with $C_{1}=C_{T}^{\prime}(1+C_{\varphi_{2}}'\beta^{i}).$
\end{remark}

Let $P= \lambda_{1}\times\lambda_{2} $ be the product measure on $ \Delta\times\Delta .$ 
We consider the product transformation $ T\times T :\Delta\times\Delta\rightarrow \Delta\times\Delta $, and let $ \pi_{1}, \pi_{2} :\Delta\times\Delta\rightarrow \Delta $ be the projections onto the first and second coordinates, respectively. 
 We use $ \eta\times\eta $ to denote the product partition of $ \Delta\times\Delta $ and $ (\eta\times\eta)_{n}=\bigvee_{i=0}^{n-1}(T\times T) ^{-i}(\eta\times\eta). $
Note that 
\begin{equation}\label{E32}
	T^{n}\circ\pi_{1}=\pi_{1}\circ(T\times T) ^{n} \text{ and } T^{n}\circ\pi_{2} = \pi_{2}\circ(T\times T) ^{n}.
\end{equation}
Let $ \hat{R}: \Delta\rightarrow \mathbb{N} $ be the return time to the base, defined for $ x\in \Delta $ by
$$ \hat{R}(x)=\min\{n\geq0 : T^{n}(x)\in \Delta_{0} \}  .$$
Note that 
\begin{equation}\label{E29}
	m\{\hat{R}>n\}=\sum_{\ell >n}m(\Delta_{\ell})=\sum_{\ell >n}m\{R>n\}.
\end{equation}
From W\ref{def:WGM:Long})  there exists $ \mathcal{P}_{0}^{\prime}=\{\omega_{1}, \omega_{2}, \ldots,\omega_{k} \}\subset \mathcal{P}_{0} $ such that 
$$f^{R}(\omega)\cap  \mathcal{P}_{0}^{\prime}\neq \emptyset, \text{ for all } \omega\in \mathcal{P}_{0}. $$ 
This implies that for any $ \omega\in \mathcal{P}_{0} $, we have
\begin{equation}\label{E46}
	f^{R}(\omega)\supseteq \omega_{i} \text{ for some } \omega_{i}\in \mathcal{P}_{0}^{\prime}.
\end{equation} 
Since the measure $ \nu \ll m $ is mixing, for given $ \epsilon>0 $ and any $ \omega_{i}\in \mathcal{P}_{0}^{\prime}$ there exists $ n_{i}= n_{i}(\omega_{i})\in \mathbb{N}$ such that 
	$\nu(T^{-n}(\Delta_{0})\cap \omega_{i})\geq \nu(\Delta_{0})\nu(\omega_{i})-\epsilon>0$ for $ n\geq  n_{i}.$
As $ \frac{d\nu}{dm}\leq C_{0}, $ then for  $ M_{i}=\frac{1}{C_{0}}(\nu(\Delta_{0})\nu(\omega_{i})-\epsilon) $ we have
\begin{equation*}
	m(T^{-n}(\Delta_{0})\cap \omega_{i})\geq M_{i} \text{ for }  n\geq  n_{i}. 
\end{equation*}
Let $ n_{0}=\max\{n_{1}, n_{2},...,n_{k}\} $ and $ \gamma_{0}=\min\{ M_{1}, M_{2},...,M_{k}\} $. Then for all  $ 1\leq i\leq k $  we have
\begin{equation}\label{30}
	m(T^{-n}(\Delta_{0})\cap \omega_{i})\geq \gamma_{0} \text{ for }  n\geq  n_{0}. 
\end{equation}
It follows from (\ref{E46}) and (\ref{30}) that for any $ \omega\in \mathcal{P}_{0} $, we have
\begin{equation}\label{E47}
	m(T^{-n}(\Delta_{0})\cap f^{R}(\omega))\geq \gamma_{0} \text{ for }  n\geq  n_{0}. 
\end{equation}
Let us now introduce a sequence of \emph{stopping times} $ 0=\tau_{0}< \tau_{1}< \tau_{2}... $ on $ \Delta\times\Delta $,
\begin{equation*}
\begin{array}{lcl}
 \tau_{1}&=&n_{0}+\hat{R}\circ T^{ n_{0}}\circ\pi_{1}\\
\tau_{2}&=&n_{0}+\tau_{1}+\hat{R}\circ T^{ n_{0}+\tau_{1}}\circ\pi_{2}\\ 
\tau_{3}&=&n_{0}+\tau_{2}+\hat{R}\circ T^{ n_{0}+\tau_{2}}\circ\pi_{1}\\
\tau_{4}&=&n_{0}+\tau_{3}+\hat{R}\circ T^{ n_{0}+\tau_{3}}\circ\pi_{2}\\
&\vdots&
\end{array}
\end{equation*}
We define the simultaneous return to the base $ S:\Delta\times\Delta\rightarrow \mathbb{N} $ by
\begin{equation}\label{E30}
	S(x, y)=\min_{i\geq 2}\{\tau_{i}(x,y): (T^{\tau_{i}(x, y), }(x),T^{\tau_{i}(x, y), }(y))\in \Delta_{0}\times\Delta_{0}\},
\end{equation}
which is well defined $ m\times m $ almost everywhere.
Let $\xi_{0}<\xi_{1}<\xi_{2}<\xi_{3}...$ be an increasing sequence of partitions on $ \Delta\times\Delta $ defined as follows. As usual, given a partition $ \xi $, we denote $\xi(x)$  the element
of $ \xi $ containing $ x $.
First we take $\xi_{0}= \eta\times\eta$.
Now we describe the general inductive step in the construction of partitions $ \xi_{k} $. Assume that $ \xi_{j} $ has been constructed for all $ j<k $. The definition of $ \xi_{k} $ depends on whether $ k $ is odd or even. For definiteness we assume that $ k $ is odd. The construction for $ k $ even is the same apart from the change in the role of the first and second components.   
We let $ \xi_{k}=\{\xi_{k}(\overline{x}): \overline{x} \in \Delta\times\Delta \}$, where
\begin{equation*}
	\xi_{k}(\overline{x})= \bigvee_{i=0}^{\tau_{k}(\overline{x})-1}(T^{-i}(\eta))(x)\times\pi_{2}(\xi_{k-1}(\overline{x})).
\end{equation*}
 From the construction of above sequence of partitions $\xi_{k}$, we have the following properties:
\begin{itemize}
	\item  the functions $ \tau_{1},\tau_{2},...,\tau_{k} $ are constant on elements of $\xi_{k}$,
	\item the sets $ \{S=\tau_{k-1}\} $ and $ \{S>\tau_{k-1}\}$ can be written as union of elements of  $\xi_{k}.$
\end{itemize}
\begin{remark}\label{R2}
For any $\Gamma\in\xi_{k} $, we can find some $ \omega_{1}, \omega_{2}\in \mathcal{P}_{0} $ such that
\begin{itemize}
\item  if $k$ is odd,  $ T^{\tau_{k}}(\pi_{1}(\Gamma))=f^{R}(\omega_{1})\subset\Delta_{0} $ and $ T^{\tau_{k-1}}(\pi_{2}(\Gamma))=f^{R}(\omega_{2})\subset\Delta_{0}$;
\item  if $k$ is even,  $ T^{\tau_{k-1}}(\pi_{1}(\Gamma))=f^{R}(\omega_{1})\subset\Delta_{0} $ and $ T^{\tau_{k}}(\pi_{2}(\Gamma))=f^{R}(\omega_{2})\subset\Delta_{0}.$
\end{itemize}
\end{remark}
The following lemmas, which are similar of \cite[Lemma 1 and Lemma 2]{Y99}, are crucial to the proof of Proposition \ref{Convergence to equilibrium }. The key difference relies on the absence of the full branch property. More precisely, they will be helpful for estimating $P(S>n)$ in Section~\ref{Polynomial estimate} and Section~\ref{Exponential estimates}.

\begin{lemma}\label{L27}
	There exists $ \epsilon_{0}>0$, depending on $ C_{\varphi_1}', C_{\varphi_2}' $,  such that for all $ k\geq 1 $ and $ \Gamma \in\xi_{k}  $ with $ S|_\Gamma>\tau_{k-1}$, we have 
	\begin{equation*}
		P(S=\tau_{k} | \Gamma)\geq \epsilon_{0}.
	\end{equation*}
Moreover, the dependence of $\epsilon_{0}$ on $ C_{\varphi_1}', C_{\varphi_2}' $ can be removed if we take $ k $ sufficiently large. 

\end{lemma}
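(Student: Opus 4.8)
The plan is to fix $k\geq 1$, $\Gamma\in\xi_k$ with $S|_\Gamma>\tau_{k-1}$, and treat the case $k$ odd; the case $k$ even is symmetric after interchanging the two coordinates and the roles of $\varphi_1,\varphi_2$. The first step is to reduce $P(S=\tau_k|\Gamma)$ to a one‑coordinate ratio. Since $\tau_1,\dots,\tau_k$ are constant on $\Gamma$ and $S|_\Gamma>\tau_{k-1}$, on $\Gamma$ we have $S\geq\tau_k$, with $S=\tau_k$ exactly on the set where $(T^{\tau_k}(x),T^{\tau_k}(y))\in\Delta_0\times\Delta_0$; and by Remark~\ref{R2}, for $k$ odd there is $\omega_1\in\mathcal{P}_0$ with $T^{\tau_k}(\pi_1(\Gamma))=f^R(\omega_1)\subset\Delta_0$, so the first coordinate automatically lands in $\Delta_0$ at time $\tau_k$. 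Because $\Gamma$ is a product $\Gamma_1\times\Gamma_2$ (built into the construction of $\xi_k$, with $\Gamma_i=\pi_i(\Gamma)$) and $P=\lambda_1\times\lambda_2$, this collapses to $P(S=\tau_k|\Gamma)=\lambda_2(\Gamma_2\cap T^{-\tau_k}(\Delta_0))/\lambda_2(\Gamma_2)$.

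Next I would push this ratio down to the base. By construction $\Gamma_2\in\eta_{\tau_{k-1}}$, and by Remark~\ref{R2} one has $T^{\tau_{k-1}}(\Gamma_2)=A:=f^R(\omega_2)\subset\Delta_0$ for some $\omega_2\in\mathcal{P}_0$. Setting $\nu_2:=T^{\tau_{k-1}}_*(\lambda_2|_{\Gamma_2})$ and $G:=A\cap T^{-(\tau_k-\tau_{k-1})}(\Delta_0)$, a pushforward computation gives $\Gamma_2\cap T^{-\tau_k}(\Delta_0)=\Gamma_2\cap T^{-\tau_{k-1}}(G)$, hence $P(S=\tau_k|\Gamma)=\nu_2(G)/\nu_2(A)$. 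Two estimates then finish the bound. On the one hand, $\tau_k-\tau_{k-1}=n_0+\hat R(T^{n_0+\tau_{k-1}}(x))\geq n_0$ is constant on $\Gamma$, so \eqref{E47} applied to $\omega_2$ gives $m(G)\geq\gamma_0$, while $m(A)\leq m(\Delta_0)$. On the other hand, Lemma~\ref{L26} (in the version for $\lambda_2$) applied to $\Gamma_2\in\eta_{\tau_{k-1}}$ furnishes a constant $C_1=C_1(C_{\varphi_2}')>0$ with $\tfrac{d\nu_2}{dm}(x)/\tfrac{d\nu_2}{dm}(y)\leq C_1$ for $x,y\in A$. Combining, $P(S=\tau_k|\Gamma)=\nu_2(G)/\nu_2(A)\geq C_1^{-1}\,m(G)/m(A)\geq\gamma_0/(C_1\,m(\Delta_0))=:\epsilon_0>0$.

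For the final assertion, I would observe that the second coordinate returns to $\Delta_0$ at each of the times $\tau_2,\tau_4,\dots,\tau_{k-1}$, so the number of visits of $\Gamma_2$ to $\Delta_0$ before time $\tau_{k-1}$ grows linearly in $k$ and tends to infinity. By the last sentence of Lemma~\ref{L26}, the distortion constant $C_1$—and hence $\epsilon_0$—can then be taken independent of $C_{\varphi_2}'$ once $k$ is large; taking the minimum with the symmetric bound for $k$ even (whose constant depends instead on $C_{\varphi_1}'$) yields a single $\epsilon_0$ with the stated properties.

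The bulk of this is bookkeeping with the stopping‑time construction, and the one point demanding care is keeping straight which coordinate has ``just returned'' at step $k$—so that the simultaneous‑return event genuinely reduces to the other coordinate landing in $\Delta_0$—together with ensuring that the distortion constant of Lemma~\ref{L26} degrades controllably in the number of base visits, which is precisely what makes the $C_{\varphi_i}'$-dependence drop out for large $k$.
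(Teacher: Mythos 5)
Your proposal is correct and follows essentially the same route as the paper: reduce $P(S=\tau_k\mid\Gamma)$ to a one‑coordinate ratio for the coordinate that has \emph{not} just returned, push forward to the base via $T_*^{\tau_{k-1}}$, then use the distortion bound of Lemma~\ref{L26} together with \eqref{E47} and $m(T^{\tau_{k-1}}(\omega'))\leq m(\Delta_0)$; the final claim about removing the $C_{\varphi_i}'$-dependence for large $k$ is handled exactly as in the paper, via the number of base visits of $\pi_2(\Gamma)$ (resp.\ $\pi_1(\Gamma)$) growing with $k$. The only (harmless) discrepancy is cosmetic: you deduce the sharper lower bound $\gamma_0/(C_1\,m(\Delta_0))$, whereas the paper records $\gamma_0/(C_1^2\,m(\Delta_0))$; both serve as a valid $\epsilon_0$.
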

\begin{proof}
	Consider first $ k\geq 1$ is odd. For $\Gamma=\omega \times \omega^{\prime} \in \xi_k$, we have $T^{\tau_{k}}$ mapping $\omega$ bijectively onto a union of elements $\eta|_{\Delta_0}$ and $T^{\tau_{k-1}}$ mapping $\omega^{\prime}$ bijectively onto a union of elements $\eta|_{\Delta_0}$.
	Let $\tilde{\omega}=\{z\in\omega^{\prime}: T^{\tau_{k}}(z)\in (\Delta_{0})  \}$. We may write
\begin{equation*}
		P\left(S=\tau_{k} \mid \Gamma\right) =\frac{\lambda_{2}(\tilde{\omega})}{\lambda_{2}(\omega^{\prime})}=\frac{T_*^{\tau_{k-1}}\left(\lambda_{2}\mid\omega^{\prime}\right)\left(T^{-(\tau_{k}-\tau_{k-1})}(\Delta_{0})\cap T^{\tau_{k-1}}(\omega^{\prime})\right)}{T_*^{\tau_{k-1}}\left(\lambda_{2}\mid\omega^{\prime}\right)\left( T^{\tau_{k-1}}(\omega^{\prime})\right)}.
	\end{equation*}
	By Lemma \ref{L26}, we get
	$$  
	P\left(S=\tau_{k} \mid \Gamma\right)\geq  \frac{\m\left(T^{-(\tau_{k}-\tau_{k-1})}(\Delta_{0})\cap T^{\tau_{k-1}}(\omega^{\prime})\right)}{C_{1}^{2}\, \m\left( T^{\tau_{k-1}}(\omega^{\prime})\right)}\geq  \frac{\m\left(T^{-(\tau_{k}-\tau_{k-1})}(\Delta_{0})\cap T^{\tau_{k-1}}(\omega^{\prime})\right)}{C_{1}^{2}\, \m(\Delta_0)}.
	$$
By definition $\tau_{k}-\tau_{k-1}\geq n_{0} $ and, from Remark~\ref{R2}, $ T^{\tau_{k-1}}(\omega^{\prime})=f^{R}(\omega_{2}) $ for some $ \omega_{2}\in \mathcal{P}_{0} $. From (\ref{E47}) we get $ \m\left(T^{-(\tau_{k}-\tau_{k-1})}(\Delta_{0})\cap T^{\tau_{k-1}}(\omega^{\prime})\right)\geq \gamma_{0} $. Thus we conclude that
\begin{equation*}
P(S=\tau_{k} | \Gamma)\geq \epsilon_{0},
\end{equation*}
with $\epsilon_{0}= \gamma_{0}/C_{1}^{2}\m(\Delta_0)$.  The case $ k $ even can be proved similarly. Moreover, the dependence of $C_1$ (hence $\epsilon_0$) on $ C_{\varphi_1}', C_{\varphi_2}' $ can be removed if we take $ k $ large enough. 
\end{proof}
In the proof of next lemma, we will use the fact that there exists some constant $ M_0>0 $ such that 
\begin{equation}\label{E28}
	\frac{dT_{*}^{n}m}{dm}\leq M_0 \text{ for all } n\geq 1.
\end{equation}

\begin{lemma}\label{L28}
	There exists $ C_{2}$, depending on $ C_{\varphi_{1}}', C_{\varphi_{2}}' $, such that for all $n, k\geq 0 $ and for all $ \Gamma \in\xi_{k} $, we have 
	\begin{equation*}
		P(\tau_{k+1}- \tau_{k}>n+n_{0} | \Gamma)\leq C_{2}m\{\hat{R}>n\}.
	\end{equation*}
Moreover, the dependence of $C_{2}$ on $ C_{\varphi_{1}}, C_{\varphi_{2}} $ can be removed if we take $ k $ sufficiently large.

\end{lemma}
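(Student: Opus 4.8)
The plan is to follow the proof of~\cite[Lemma~2]{Y99}, the only substantive change being that in the absence of the full branch property the image of the relevant partition element under the previous stopping time is not all of $\Delta_0$ but a long branch $f^{R}(\omega_2)$, whose measure is bounded below by~W\ref{def:WGM:Long}). I would treat $k\geq1$ in detail and, without loss of generality, take $k$ odd (the case $k$ even being identical after swapping the two coordinates), handling $k=0$ separately at the end. By the construction of the $\xi_j$, an element $\Gamma\in\xi_k$ with $k$ odd is a product $\Gamma=\omega\times\omega'$ with $\omega\in\eta_{\tau_k}$ and $\omega'=\pi_2(\xi_{k-1}(\overline{x}))\in\eta_{\tau_{k-1}}$, the functions $\tau_1,\dots,\tau_k$ are constant on $\Gamma$, and by Remark~\ref{R2} the map $T^{\tau_{k-1}}$ sends $\omega'$ bijectively (mod $0$) onto $f^{R}(\omega_2)\subset\Delta_0$ for some $\omega_2\in\mathcal P_0$. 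Since $\tau_{k+1}-\tau_k=n_0+\hat R\circ T^{n_0+\tau_k}\circ\pi_2$ depends only on the second coordinate and $P=\lambda_1\times\lambda_2$,
\begin{equation*}
P\big(\tau_{k+1}-\tau_k>n+n_0\mid\Gamma\big)=\frac{\lambda_2\big(\omega'\cap T^{-(n_0+\tau_k)}\{\hat R>n\}\big)}{\lambda_2(\omega')}.
\end{equation*}

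To estimate the numerator I would push $\lambda_2|\omega'$ forward by $T^{\tau_{k-1}}$. Writing $g=dT^{\tau_{k-1}}_{*}(\lambda_2|\omega')/dm$, which is supported on $f^{R}(\omega_2)$ and has total mass $\lambda_2(\omega')$, a change of variables gives
\begin{equation*}
\lambda_2\big(\omega'\cap T^{-(n_0+\tau_k)}\{\hat R>n\}\big)=\int_{f^{R}(\omega_2)\cap T^{-(n_0+\tau_k-\tau_{k-1})}\{\hat R>n\}}g\,dm\leq(\sup g)\cdot m\big(T^{-(n_0+\tau_k-\tau_{k-1})}\{\hat R>n\}\big).
\end{equation*}
Lemma~\ref{L26} applied to $\omega'\in\eta_{\tau_{k-1}}$ gives $\sup g\leq C_1\inf g\leq C_1\lambda_2(\omega')/m(T^{\tau_{k-1}}(\omega'))$, and here the long branch property enters: $m(T^{\tau_{k-1}}(\omega'))=m(f^{R}(\omega_2))\geq\delta_0$, whence $\sup g\leq C_1\lambda_2(\omega')/\delta_0$. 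On the other hand, since $n_0+\tau_k-\tau_{k-1}\geq1$, estimate~\eqref{E28} for the reference measure yields $m\big(T^{-j}\{\hat R>n\}\big)=(T^{j}_{*}m)(\{\hat R>n\})\leq M_0\,m\{\hat R>n\}$. Combining these two bounds and dividing by $\lambda_2(\omega')$ proves the inequality with $C_2=C_1M_0/\delta_0$.

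For the last assertion, recall that in Lemma~\ref{L26} one may take $C_1=C_T^{\prime}(1+C_{\varphi_2}'\beta^{i})$, where $i$ is the number of visits of $\omega'$ to $\Delta_0$ before time $\tau_{k-1}$; since $\omega'$ arises from $k-1$ alternating simultaneous-return steps, $i\to\infty$ as $k\to\infty$, so for $k$ large $C_1$, and hence $C_2$, may be taken independent of $C_{\varphi_1}'$ and $C_{\varphi_2}'$. The remaining case $k=0$ is simpler: for $\Gamma=\omega\times\omega'\in\eta\times\eta$ one has $\tau_1-\tau_0=n_0+\hat R\circ T^{n_0}\circ\pi_1$, hence $P(\tau_1>n+n_0\mid\Gamma)=\lambda_1\big(\omega\cap T^{-n_0}\{\hat R>n\}\big)/\lambda_1(\omega)$, which one bounds directly using that $\varphi_1\in\mathcal F_\beta(\Delta)\subset L^\infty(m)$ together with~\eqref{E28}.

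I expect the main obstacle, and the only genuinely new point with respect to~\cite{Y99}, to be exactly the inequality $m(T^{\tau_{k-1}}(\omega'))\geq\delta_0$: under the full branch hypothesis $T^{\tau_{k-1}}(\omega')=\Delta_0$ automatically, whereas here one must invoke~W\ref{def:WGM:Long}), which is why the constant $\delta_0$ appears in $C_2$. A secondary, purely bookkeeping, point is to apply Lemma~\ref{L26} through the pushed-forward density $g$ on the image $T^{\tau_{k-1}}(\omega')$ rather than naively on $\omega'$, and to track the growth of the number of base-returns inside $\omega'$ for the uniformity statement.
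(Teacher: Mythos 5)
Your treatment of $k\geq1$ matches the paper's proof step for step: you push $\lambda_2|\omega'$ forward by $T^{\tau_{k-1}}$, invoke Lemma~\ref{L26} to bound the density ratio on the image, invoke the long-branch estimate $m(T^{\tau_{k-1}}(\omega'))=m(f^{R}(\omega_2))\geq\delta_0$ via Remark~\ref{R2}, and then apply~\eqref{E28} to push forward the extra $\tau_k-\tau_{k-1}+n_0$ steps. You correctly identify the long-branch inequality as the one point where the argument departs from~\cite[Lemma~2]{Y99}. Your constant $C_2=C_1M_0/\delta_0$ is in fact slightly sharper than the paper's $M_0C_1^2/\delta_0$ (only a single application of the distortion ratio is needed), and your explanation of why the dependence on $C_{\varphi_1}',C_{\varphi_2}'$ disappears for $k$ large — the factor $C_1=C_T'(1+C_{\varphi_2}'\beta^i)$ with $i\to\infty$ — is exactly the paper's.

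The one genuine gap is the $k=0$ case. You propose to bound the \emph{conditional} probability
$P(\tau_1>n+n_0\mid\Gamma)=\lambda_1(\omega\cap T^{-n_0}\{\hat R>n\})/\lambda_1(\omega)$ ``directly'' for every $\Gamma=\omega\times\omega'\in\eta\times\eta$, but no such bound independent of $\Gamma$ exists. Indeed, take $\omega=\Delta_{0,i}$ with $R(\Delta_{0,i})>n_0+n$ (possible for any fixed $n$ whenever $R$ is unbounded); then $\hat R\circ T^{n_0}\equiv R(\Delta_{0,i})-n_0>n$ on $\omega$, so the conditional probability equals $1$, while $m\{\hat R>n\}$ can be made arbitrarily small. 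The denominator $\lambda_1(\omega)$ cannot be bounded below, and $\varphi_1\in L^\infty$ plus~\eqref{E28} cannot rescue the estimate. The paper handles $k=0$ differently, proving only the \emph{unconditional} bound $P\{\tau_1>n_0+n\}=(T_*^{n_0}\lambda_1)\{\hat R>n\}\leq M_0\|\varphi_1\|_\infty\,m\{\hat R>n\}$, which is what the downstream estimate on $P\{S>n\}$ actually uses. You should do the same, rather than assert the conditional version for $k=0$.
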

\begin{proof}
	First we consider $k=0$. By definition, $\tau_{k+1}- \tau_{k}=n_{0}+\hat{R}\circ T^{\tau_{k}+n_{0}}  $,  then we may write for all $n \geq 0$
	$$
	\begin{aligned}
		P\left\{\tau_1>n_0+n\right\}&=P\left(\{\hat{R}\circ T^{n_{0}}>n\}\times\Delta\right)\\
		&=\left(T_*^{n_0} \lambda_{1}\right)\{\hat{R}>n\} \\
		& \leq\|\varphi_{1}\|_{\infty}\left(T_*^{n_0} m\right)\{\hat{R}>n\}\\
		& \leq M_0\|\varphi_{1}\|_{\infty} m\{\hat{R}>n\}.
	\end{aligned}
	$$
	Now consider $k \geq 1$ is  odd. Since for $\Gamma=\omega \times \omega^{\prime} \in \xi_k$, we have $T^{\tau_{k-1}}(\omega)$ contained in some element of $\eta$ and $T^{\tau_{k-1}}$ mapping $\omega^{\prime}$ bijectively onto a union of elements in $\eta|_{\Delta_0}$. This implies that $\omega^{\prime} \in \eta_{\tau_{k-1}}$ and
	\begin{equation}\label{E42}
		\frac{1}{P(\Gamma)} T_*^{\tau_{k-1}} {\pi_2}_*(P \mid \Gamma)=\frac{ T_*^{\tau_{k-1}}\left(\lambda_{2} \mid \omega^{\prime}\right)}{T_*^{\tau_{k-1}}\left(\lambda_{2} \mid \omega^{\prime}\right)(T^{\tau_{k-1}}(\omega^{\prime}))}.
	\end{equation}
	Now,
	\begin{equation}\label{E43}
		\begin{aligned}
			P(\tau_{k+1}-\tau_k>n_0+n \mid \Gamma) &=P(\hat{R} \circ T^{\tau_k+n_0} \circ \pi_2>n \mid \Gamma) \\
			&=\frac{1}{P(\Gamma)}(P \mid \Gamma)\{\hat{R} \circ T^{\tau_k+n_0} \circ \pi_2>n\} \\
			&=\frac{1}{P(\Gamma)} T_*^{\tau_k+n_0} {\pi_2}_*(P \mid \Gamma)\{\hat{R}>n\} \\
			&=T_*^{\tau_k-\tau_{k-1}+n_0} \frac{1}{P(\Gamma)} T_*^{\tau_{k-1}}  {\pi_2}_*(P \mid \Gamma)\{\hat{R}>n\} .
		\end{aligned}
	\end{equation}
	Combining (\ref{E42}) and (\ref{E43}), we have
	\begin{equation*}
		\begin{aligned}
			P\left(\tau_{k+1}-\tau_k>n_0+n \mid \Gamma\right)
			&=T_*^{\tau_k-\tau_{k-1}+n_0}\frac{T_*^{\tau_{k-1}}(\lambda_{2} \mid \omega^{\prime})\{\hat{R}>n\}}{T_*^{\tau_{k-1}}(\lambda_{2} \mid \omega^{\prime})(T^{\tau_{k-1}}(\omega^{\prime}))} \\
			&=T_*^{\tau_k-\tau_{k-1}+n_0}\frac{T_*^{\tau_{k-1}}(\lambda_{2}\mid\omega^{\prime})(\{\hat{R}>n\}\cap T^{\tau_{k-1}}(\omega^{\prime}))}{T_*^{\tau_{k-1}}\left(\lambda_{2} \mid\omega^{\prime}\right)(T^{\tau_{k-1}}(\omega^{\prime}))}\\
		\end{aligned}
	\end{equation*}
	which together with (\ref{E28}), Lemma \ref{L26} and the fact that $ m(T^{\tau_{k-1}}(\omega^{\prime}))\geq \delta_{0} $, implies
	$$  
	\begin{aligned}
		P\left(\tau_{k+1}-\tau_k>n_0+n \mid \Gamma\right)&\leq M_0\frac{C_{1}^{2}}{\delta_{0}}m(\{\hat{R}>n\}\cap T^{\tau_{k-1}}\omega^{\prime})\\
		&\leq C_{2} m\{\hat{R}>n\},
	\end{aligned}
	$$
	with $ C_{2}=M_0{C_{1}^{2}}/{\delta_{0}}$. 
	 The case $ k $ even is similar. 
Moreover, the dependence of $C_1$ (hence $C_2)$ on $ C_{\varphi_1}', C_{\varphi_2}' $ can be removed if we take $ k $ large enough. 
	\end{proof}

\subsection{Matching $T_{*}^{n}\lambda_{1}$ with $T_{*}^{n}\lambda_{2}$. }\label{matching section}
 We are now going to estimate  $|T_{*}^{n}\lambda_{1}-T_{*}^{n}\lambda_{2}|$. 
Consider the induced dynamical system $ \widetilde{T}=(T\times T)^{S}: \Delta\times\Delta\rightarrow \Delta\times\Delta ,$ with $ S $ as in \eqref{E30}, and the functions $ 0=S_{0}<S_{1}<S_{2}<..., $ defined for each $ n\geq 1 $ as
\begin{equation*}
	S_{n}=S_{n-1}+S\circ(T\times T)^{S_{n-1}}.
\end{equation*}
Note that 
\begin{equation*}
\widetilde{T}^{n}=(T\times T)^{S_{n}}.
\end{equation*}
 Let $ \widetilde{\xi} $ be the partition of $ \Delta\times\Delta $ into the rectangles $ \Omega $ on which $S$ is constant and $ \widetilde{T} $ maps $ \Omega $ bijectively onto a union of elements of $  \eta\times\eta|_{\Delta_0 \times \Delta_0} .$ 
Without loss of generality, we assume that for any $ \Omega\in \widetilde{\xi}|_{\Delta_0 \times \Delta_0}$, there exists $ \omega_{j}\times\omega_{j^{\prime}}\in  \eta\times\eta|_{\Delta_0 \times \Delta_0}$ such that $ \Omega\subset \omega_{j}\times\omega_{j^{\prime}} .$
For each $ n\geq 1, $ define 
$$
\tilde{\xi}_n=\bigvee_{j=0}^{n-1} \widetilde{T}^{-j} (\tilde{\xi}).
$$
Each $\tilde{\xi}_n$ is a partition into sets $\Omega \subset \Delta \times \Delta$ on which $S_n$ is constant and $\widetilde{T}^n$ maps $\Omega$ bijectively onto an $\m\times\m$ mod 0 union of elements of  $ \eta\times\eta|_{\Delta_0 \times \Delta_0}$. Let us introduce a separation time in $\Delta \times \Delta$, defining for each $u, v \in \Delta \times \Delta$
$$\tilde{s}(u, v)=\min \left\{n \geq 0: \widetilde{T}^n(u)\right.\text{ and }\widetilde{T}^n(v) \text{ lie in distinct elements of }\left.\tilde{\xi}\right\}.$$
Let $ \varphi_{1} =\frac{d\lambda_{1}}{dm}, \varphi_{2}=\frac{d\lambda_{2}}{dm} $, $ \Phi(x,y)=\varphi_{1}(x)\varphi_{2}(y) $  and let $ C_{\varphi_{1}}'' $ and $ C_{\varphi_{2}}''$ be constants such that 
\begin{equation*}
	 \left|\log\frac{\varphi_{1}(x)}{\varphi_{1}(y)} \right|\leq  C_{\varphi_{1}}'' \beta^{s\left(x,y\right)} \text{ and } \left|\log\frac{\varphi_{2}(x)}{\varphi_{2}(y)} \right|\leq  C_{\varphi_{2}}'' \beta^{s\left(x,y\right)} \text{ for all } x,y \in \omega\in \eta.
\end{equation*}

\begin{lemma}\cite[Lemma 3.37]{Alves Book}\label{L32}
There exists $C_{3}>0$ depending on $C_{\varphi_{1}}'', C_{\varphi_{2}}''$  such that for all $n \geq 1, \Omega \in \tilde{\xi}_n$ and $u, v \in \widetilde{T}^{n}(\Omega) $ we have
$$
\frac{d \widetilde{T}_*^n(P \mid \Omega)}{d(m \times m)}(u) \Big/ \frac{d \widetilde{T}_*^n(P \mid \Omega)}{d(m \times m)}(v) \leq C_{3} .
$$
\end{lemma}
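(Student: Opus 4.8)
The plan is to write the ratio of densities as a product of four factors — two Jacobian factors, one per coordinate of $T^{S_n}$, and two factors involving $\varphi_1$ and $\varphi_2$ — and to bound each of them by a constant depending only on $\bar{C_T}$, $C_{\varphi_1}''$ and $C_{\varphi_2}''$. First I would fix $\Omega\in\tilde{\xi}_n$ and let $\sigma$ be the constant value of $S_n$ on $\Omega$, so that $\widetilde{T}^n|_\Omega=(T\times T)^\sigma|_\Omega=(T^\sigma\times T^\sigma)|_\Omega$ is a bijection onto $\widetilde{T}^n(\Omega)\subseteq\Delta_0\times\Delta_0$. Since the Jacobian of a product map factorises as $J_{(T\times T)^\sigma}(x,y)=J_{T^\sigma}(x)\,J_{T^\sigma}(y)$, the change of variables formula gives, for $u=(x,y)\in\widetilde{T}^n(\Omega)$ with preimage $(x_0,y_0)=(\widetilde{T}^n|_\Omega)^{-1}(u)\in\Omega$,
\[
\frac{d\widetilde{T}_*^n(P\mid\Omega)}{d(m\times m)}(u)=\frac{\Phi(x_0,y_0)}{J_{(T\times T)^\sigma}(x_0,y_0)}=\frac{\varphi_1(x_0)\,\varphi_2(y_0)}{J_{T^\sigma}(x_0)\,J_{T^\sigma}(y_0)}.
\]
Hence, for a second point $v\in\widetilde{T}^n(\Omega)$ with preimage $(x_0',y_0')\in\Omega$,
\[
\frac{d\widetilde{T}_*^n(P\mid\Omega)/d(m\times m)(u)}{d\widetilde{T}_*^n(P\mid\Omega)/d(m\times m)(v)}=\frac{\varphi_1(x_0)}{\varphi_1(x_0')}\cdot\frac{\varphi_2(y_0)}{\varphi_2(y_0')}\cdot\frac{J_{T^\sigma}(x_0')}{J_{T^\sigma}(x_0)}\cdot\frac{J_{T^\sigma}(y_0')}{J_{T^\sigma}(y_0)}.
\]

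The key structural observation, which I would establish before anything else, is that $\pi_1(\Omega)$ and $\pi_2(\Omega)$ are each contained in a single element of $\eta_\sigma$, and hence of $\eta$. Writing $\Omega=\bigcap_{j=0}^{n-1}\widetilde{T}^{-j}(\Omega_j)$ with $\Omega_j\in\tilde{\xi}$, and letting $\sigma_j$ be the constant value of $S$ on $\Omega_j$ (so $\sigma=\sigma_0+\cdots+\sigma_{n-1}$), each $\Omega_j$ is a rectangle whose first projection lies in a single element of $\eta_{\sigma_j}$; iterating $\widetilde{T}=(T\times T)^S$ block by block then keeps the $T$-orbit of $\pi_1(\Omega)$ inside one element of $\eta$ at every step up to time $\sigma$, whence $\pi_1(\Omega)$ lies in a single element of $\eta_\sigma$, and likewise for $\pi_2(\Omega)$. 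In particular $x_0,x_0'$ belong to one element of $\eta_\sigma$ and $y_0,y_0'$ to one element of $\eta_\sigma$.

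With this in hand the four factors are bounded directly. For the Jacobian factors, Lemma~\ref{L20} applied with $\sigma$ in place of $n$ gives
\[
\log\frac{J_{T^\sigma}(x_0')}{J_{T^\sigma}(x_0)}\le\bar{C_T}\,\beta^{s(T^\sigma(x_0'),T^\sigma(x_0))}\le\bar{C_T},
\]
and similarly with $y_0,y_0'$, so each Jacobian factor is at most $e^{\bar{C_T}}$. For the density factors, since $x_0,x_0'$ lie in the same element of $\eta$, the hypothesis on $C_{\varphi_1}''$ yields $\bigl|\log(\varphi_1(x_0)/\varphi_1(x_0'))\bigr|\le C_{\varphi_1}''\beta^{s(x_0,x_0')}\le C_{\varphi_1}''$, so that factor is at most $e^{C_{\varphi_1}''}$; the $\varphi_2$-factor is at most $e^{C_{\varphi_2}''}$ in the same way. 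Multiplying, the ratio is at most $C_3:=e^{2\bar{C_T}+C_{\varphi_1}''+C_{\varphi_2}''}$, which depends only on $\bar{C_T}$, $C_{\varphi_1}''$ and $C_{\varphi_2}''$, as claimed.

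I expect the only genuine work to be the structural claim that $\pi_1(\Omega)$ and $\pi_2(\Omega)$ sit inside single elements of $\eta_\sigma$: this is where the combinatorics of the simultaneous-return construction (the definition of $S$ and of the partitions $\tilde{\xi}$, $\tilde{\xi}_n$) enters, and unwinding it carefully is the main obstacle. Everything else is a routine computation resting on the bounded distortion of the tower map (Lemma~\ref{L20}) and the $\mathcal F_\beta^+$-regularity of $\varphi_1$ and $\varphi_2$; note that here one only needs $\beta^{s}\le1$, the finer use of the separation times being reserved for the matching estimates that follow.
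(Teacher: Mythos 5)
Your factorization of the density ratio into two Jacobian factors and two observable factors, the change of variables giving $\frac{d\widetilde{T}_*^n(P\mid\Omega)}{d(m\times m)}(u)=\Phi(x_0,y_0)/J_{(T\times T)^\sigma}(x_0,y_0)$, the appeal to the tower bounded-distortion Lemma~\ref{L20}, and the use of the $\mathcal F_\beta^+$-regularity constants $C_{\varphi_1}'', C_{\varphi_2}''$ (together with the crude bound $\beta^s\leq1$) are all correct, and this is indeed the substance of \cite[Lemma~3.37]{Alves Book}, which the paper cites without reproducing. You also correctly single out the structural claim --- that for $\Omega\in\tilde\xi_n$ with $S_n|_\Omega=\sigma$ both $\pi_1(\Omega)$ and $\pi_2(\Omega)$ sit inside single atoms of $\eta_\sigma$ --- as the only nontrivial point, and your block-by-block reduction from $\tilde\xi_n$ to $\tilde\xi$ is sound.

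Where you leave a real gap is the base case of that structural claim, which you dispatch with ``and likewise for $\pi_2(\Omega)$''. The stopping-time construction is not symmetric: if $\Gamma\in\xi_k$ with $k$ odd and $S|_\Gamma=\tau_k=\sigma$, the inductive definition of $\xi_k$ gives $\pi_1(\Gamma)\in\eta_{\tau_k}=\eta_\sigma$, but only $\pi_2(\Gamma)\in\eta_{\tau_{k-1}}$, and by Remark~\ref{R2} $T^{\tau_{k-1}}(\pi_2(\Gamma))=f^R(\omega_2)$ is generally a union of several atoms of $\eta|_{\Delta_0}$, so $\pi_2(\Gamma)$ does straddle several atoms of $\eta_\sigma$. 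The partition $\tilde\xi$ therefore genuinely refines the $\{S=\tau_k\}$ pieces of $\xi_k$ in the second coordinate, and it is exactly this refinement (encoded in the ``$\widetilde{T}$ maps $\Omega$ bijectively onto a union of elements of $\eta\times\eta|_{\Delta_0\times\Delta_0}$'' clause, and made harmless by the ``without loss of generality'' rider) that you must invoke before Lemma~\ref{L20} can be applied with exponent $\sigma$ to the $y$-Jacobians. Note also that mere injectivity of $T^\sigma|_{\pi_2(\Omega)}$ with Markov image does not by itself confine $\pi_2(\Omega)$ to a single $\eta_\sigma$-atom (two $\eta_\sigma$-atoms with disjoint $T^\sigma$-images could be glued while preserving bijectivity), so one must argue that $\tilde\xi$ is chosen at least as fine as $(\eta\times\eta)_\sigma$ on $\{S=\sigma\}$. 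Once that is pinned down, your computation closes the argument with $C_3=e^{2\bar{C_T}+C_{\varphi_1}''+C_{\varphi_2}''}$.
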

\begin{proposition}\label{P38}
There exists $C_{4}>0$ depending on $C_{\varphi_{1}}'', C_{\varphi_{2}}''$ such that for all $n, k \geq 0$, we have
$$
P\left\{S_{k+1}-S_k>n\right\} \leq C_{4}(m \times m)\{S>n\}.
$$
\end{proposition}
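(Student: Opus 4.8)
The plan is to reduce the statement about the induced system $\widetilde{T}=(T\times T)^S$ to the estimate on a single return, namely Lemma~\ref{L28}, by conditioning on elements of the partition $\tilde\xi_k$ and tracking how the increment $S_{k+1}-S_k$ decomposes along the stopping times $\tau_i$. Recall that $S$ is itself built as one of the $\tau_i$, so $S_{k+1}-S_k = S\circ(T\times T)^{S_k}$ is a sum of successive increments $\tau_{i+1}-\tau_i$, each of which is bounded in distribution by $m\{\hat R>n\}$ via Lemma~\ref{L28}. First I would fix $\Omega\in\tilde\xi_k$; on such an element $S_k$ is constant and $\widetilde T^k$ maps $\Omega$ bijectively onto a union of elements of $\eta\times\eta|_{\Delta_0\times\Delta_0}$, so the conditional measure $\tfrac1{P(\Omega)}\widetilde T^k_*(P\mid\Omega)$ is comparable (by Lemma~\ref{L32}, with a constant $C_3$ depending on $C''_{\varphi_1},C''_{\varphi_2}$) to $m\times m$ restricted to $\Delta_0\times\Delta_0$ and renormalized. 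Hence it suffices to bound, for a measure of this comparable type started at the base, the probability that the next simultaneous return $S$ exceeds $n$.

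Next I would estimate $(m\times m)\{S>n\}$ directly on $\Delta_0\times\Delta_0$ (or rather the pushforward of the conditional measure, which is comparable to it). Writing $S$ as $\tau_{i^*}$ where $i^*\ge 2$ is the first index at which both coordinates land in $\Delta_0$, one has the inclusion $\{S>n\}\subseteq\bigcup_{i\ge 1}\{S\ge\tau_i\}\cap\{\tau_i - \tau_{i-1}>\text{(a definite fraction of }n)\}$ — more precisely, if $S>n$ then at least one of the increments $\tau_{i+1}-\tau_i$ among those occurring before time $n$ must itself exceed roughly $n/i$, but the cleaner route is: each increment is $\tau_{i+1}-\tau_i = n_0 + \hat R\circ T^{\tau_i+n_0}\circ\pi_{\cdot}$, and by Lemma~\ref{L27} the number of steps $i$ until $S$ occurs is stochastically dominated by a geometric random variable with parameter $\epsilon_0$. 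Combining the geometric bound on the number of increments with the tail bound $C_2\, m\{\hat R>n\}$ on each increment from Lemma~\ref{L28}, and using $m\{\hat R>n\}=\sum_{\ell>n}m\{R>n\}$ from \eqref{E29} together with summability, yields $(m\times m)\{S>n\}\le C\, m\{\hat R>n\}\cdot(\text{const})$, i.e. a bound of the right form up to a multiplicative constant. This is the analogue of the standard Young-tower computation bounding the return time of the product system by that of a single factor.

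Putting the two pieces together: conditioned on $\Omega\in\tilde\xi_k$, $P(S_{k+1}-S_k>n\mid\Omega) \le C_3\cdot(\text{the base-measure estimate of the previous paragraph applied to }n)$, so summing (integrating) over $\Omega\in\tilde\xi_k$ with weights $P(\Omega)$ gives $P\{S_{k+1}-S_k>n\}\le C_4\,(m\times m)\{S>n\}$, with $C_4$ absorbing $C_3$, $C_2$, $M_0$ and the geometric-series constant $1/(1-(1-\epsilon_0))=1/\epsilon_0$. The dependence of $C_4$ on $C''_{\varphi_1},C''_{\varphi_2}$ enters only through $C_3$ (and through $C_1$ inside $C_2$ and $\epsilon_0$), exactly as in Lemmas~\ref{L27}, \ref{L28} and~\ref{L32}.

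The main obstacle I anticipate is the bookkeeping in the reduction from $S_{k+1}-S_k$ to a single increment $\tau_{i+1}-\tau_i$: one must carefully combine the geometric tail on the number of intermediate stopping times (Lemma~\ref{L27}) with the uniform-in-$k$ tail bound on each increment (Lemma~\ref{L28}), and verify that the union/convolution of these estimates still produces a bound proportional to $(m\times m)\{S>n\}$ rather than merely to $m\{\hat R>n\}$ — i.e. that one does not lose a factor that grows with $n$. The fact that $m\{\hat R>n\}$ is, up to constants, the same order as $(m\times m)\{S>n\}$ (both governed by the tail of $R$) is what makes the argument close; verifying this equivalence, and checking that none of the constants secretly depend on $n$ or $k$, is where the care is needed.
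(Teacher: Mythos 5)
Your proposal starts out on the right track by conditioning on $\Omega\in\tilde\xi_k$, noting that $S_{k+1}-S_k=S\circ\widetilde T^k$, and invoking Lemma~\ref{L32} to compare $\widetilde T^k_*(P\mid\Omega)$ to $m\times m$. That comparison is already the whole proof: once you know the density of $\widetilde T^k_*(P\mid\Omega)$ with respect to $m\times m$ is almost constant on $\widetilde T^k(\Omega)$ (ratio bounded by $C_3$), you get
\[
P(S_{k+1}-S_k>n\mid\Omega)=\frac{\widetilde T^k_*(P\mid\Omega)\bigl(\{S>n\}\cap\widetilde T^k(\Omega)\bigr)}{\widetilde T^k_*(P\mid\Omega)\bigl(\widetilde T^k(\Omega)\bigr)}\leq C_3^2\,\frac{(m\times m)\{S>n\}}{(m\times m)\bigl(\widetilde T^k(\Omega)\bigr)}\leq \frac{C_3^2}{\delta_0^2}(m\times m)\{S>n\},
\]
where the last inequality uses the long-branch property W\ref{def:WGM:Long}) to bound $(m\times m)(\widetilde T^k(\Omega))\geq\delta_0^2$. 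Summing over $\Omega$ with weights $P(\Omega)$ finishes the case $k\geq 1$; the case $k=0$ is handled separately by noting $\Phi=\varphi_1\otimes\varphi_2$ is bounded above. You omit both the $\delta_0^2$ lower bound and the $k=0$ base case.

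The genuine gap is the second half of your argument, where you propose to ``estimate $(m\times m)\{S>n\}$ directly'' via Lemmas~\ref{L27} and \ref{L28} and the tail of $\hat R$. This is unnecessary: $(m\times m)\{S>n\}$ is the \emph{right-hand side} of the inequality you are asked to prove, not a quantity to be re-expressed. Worse, it is not merely inefficient — it introduces a step that you cannot close. Chasing the $\tau_i$ increments with Lemmas~\ref{L27} and \ref{L28} would give you a bound of the form $P\{S_{k+1}-S_k>n\}\lesssim m\{\hat R>n\}$, and to convert that into $\lesssim(m\times m)\{S>n\}$ you would then need a \emph{lower} bound $(m\times m)\{S>n\}\gtrsim m\{\hat R>n\}$. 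Nothing in the paper establishes such a lower bound, and there is no reason it should hold in general; the paper only proves the upper bound on $(m\times m)\{S>n\}$, in Sections~\ref{Polynomial estimate}--\ref{Exponential estimates}, and that goes in the wrong direction for your purposes. So the anticipated ``obstacle'' you flag at the end is real and fatal to the route you chose: Lemmas~\ref{L27} and \ref{L28} simply do not belong in this proof.
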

\begin{proof}
Consider first  $k=0$. Since  the density of $P$ with respect to $m \times m$ is given by $\Phi\left(x, x^{\prime}\right)=\varphi_{1}(x) \varphi_{2}\left(x^{\prime}\right)$, which is bounded from above by some constant $ C_{0}^{\prime}>0$, then we have 
$  
P\{S_{1}=S>n\}\leq C_{0}^{\prime}m\times m\{S>n\}.
$

Assume now $k \geq 1$ and take $\Omega \in \tilde{\xi}_k$. By definition $\widetilde{T}^k$ maps $\Omega$ bijectively onto a union of elements of $\eta\times\eta|_{\Delta_0 \times \Delta_0}$, and $S_{k+1}-S_k=S \circ \widetilde{T}^k$. Then, we can write

$$
\begin{aligned}
P\left(S_{k+1}-S_k>n \mid \Omega\right)=&\frac{P(\{S\circ \widetilde{T}^k>n\}\cap\Omega)}{P\left(\Omega\right)}\\
&=\frac{\tilde{T}_*^k(P \mid \Omega)\{S>n\}}{\widetilde{T}_*^k(P \mid \Omega)(\widetilde{T}^k(\Omega))}\\ 
&=\frac{\tilde{T}_*^k(P \mid \Omega)(\{S>n\}\cap\widetilde{T}^k(\Omega))}{\widetilde{T}_*^k(P \mid \Omega)(\widetilde{T}^k(\Omega))}. 
\end{aligned}
$$
Using Lemma \ref{L32}, and the fact that $ (m\times m)(\widetilde{T}^k(\Omega)) \geq \delta_{0}^{2}$, we have 
$$
P\left(S_{k+1}-S_k>n \mid \Omega\right) \leq \frac{C_{3}^2}{\delta_{0}^{2}}(m \times m)\{S>n\}.
$$
It follows that
$$
\begin{aligned}
	P\left\{S_{k+1}-S_k>n\right\} &=\sum_{\Omega \in \tilde{\xi}_k} P\left(S_{k+1}-S_k>n \mid \Omega\right) P(\Omega) \\
	& \leq \frac{C_{3}^2}{\delta_{0}^{2}} \sum_{\Omega \in \tilde{\xi}_k}(m \times m)\{S>n\} P(\Omega) \\
	&=\frac{C_{3}^2}{\delta_{0}^{2}}(m \times m)\{S>n\} P(\Delta \times \Delta),
\end{aligned}
$$
which gives us the required result with $ C_{4}={C_{3}^2}/{\delta_{0}^{2}}$. 
\end{proof}

We are going to define a sequence of densities $\tilde{\Phi}_0 \geq \tilde{\Phi}_1 \geq \tilde{\Phi}_2 \geq \cdots$ on $ \Delta\times\Delta $, which play a major role for the matching. Consider $ \epsilon>0 $ and $ i_{1}\in\mathbb N_0 $ to be choosen properly such that defining
\begin{equation}\label{E49}
\tilde{\Phi}_k(u)= \begin{cases}\Phi(u), & \text { if } k \leq i_1 \\ \tilde{\Phi}_{k-1}(u)-\epsilon J_{\widetilde{T}^k}(u) \min _{v \in \tilde{\xi}_k(u)} \frac{\tilde{\Phi}_{k-1}(v)}{J_{\tilde{T}^k}(v)}, & \text { if } k>i_1.\end{cases}
\end{equation}
the following holds (for details see~\cite[Lemma 3]{Y99}):
\begin{lemma}\label{L34}
There exists $ 0<\epsilon_{1}<1 $ such that for all $ k>i_1 $
we have 
$$\tilde{\Phi}_k\leq (1-\epsilon_{1})\tilde{\Phi}_{k-1} \text{ on } \Delta\times\Delta.  $$
\end{lemma}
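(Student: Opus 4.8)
The plan is to show that the recursively defined densities $\tilde\Phi_k$ drop by a definite multiplicative factor at each step past $i_1$, by tracking what the subtraction in \eqref{E49} does on a single element of the refining partition. Fix $k>i_1$ and an element $\Omega\in\tilde\xi_k$; note that $\tilde\xi_k$ refines $\tilde\xi_{k-1}$, so $\tilde\Phi_{k-1}$ has the distortion control coming from Lemma~\ref{L32} applied with $n=k-1$ together with an extra step (or, more directly, the bounded-distortion statement that the density $\tilde\Phi_{k-1}/J_{\widetilde T^{k-1}}$ pushed forward has oscillation controlled by $C_3$). The key point is that on $\Omega$ the ratio
$$
\frac{\tilde\Phi_{k-1}(v)}{J_{\widetilde T^k}(v)}\Big/\frac{\tilde\Phi_{k-1}(u)}{J_{\widetilde T^k}(u)}
$$
stays within a fixed factor for $u,v\in\Omega$, so the minimum appearing in \eqref{E49} is comparable to the value at $u$ itself. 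I would first make this comparability precise: there is a constant $C_5>0$, depending only on $C_3$ (hence ultimately on $C_{\varphi_1}'',C_{\varphi_2}''$ but uniformly once $k$ is large), such that for all $u\in\Delta\times\Delta$ and $k>i_1$,
$$
J_{\widetilde T^k}(u)\min_{v\in\tilde\xi_k(u)}\frac{\tilde\Phi_{k-1}(v)}{J_{\widetilde T^k}(v)}\ \geq\ \frac{1}{C_5}\,\tilde\Phi_{k-1}(u).
$$

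Granting this, the definition \eqref{E49} gives immediately
$$
\tilde\Phi_k(u)\ \leq\ \tilde\Phi_{k-1}(u)-\frac{\epsilon}{C_5}\,\tilde\Phi_{k-1}(u)=\Bigl(1-\frac{\epsilon}{C_5}\Bigr)\tilde\Phi_{k-1}(u),
$$
so one sets $\epsilon_1=\epsilon/C_5$, which lies in $(0,1)$ provided $\epsilon$ was chosen small enough at the outset (this is part of the "choosen properly" clause: we commit to $0<\epsilon<C_5$). One should also check that $\tilde\Phi_k\geq 0$, i.e. that the subtracted quantity never exceeds $\tilde\Phi_{k-1}$ pointwise; but this is automatic from the comparability inequality above since $\epsilon/C_5<1$, so the sequence is genuinely a decreasing sequence of nonnegative densities $\tilde\Phi_0\geq\tilde\Phi_1\geq\cdots\geq 0$, as asserted before Lemma~\ref{L34}.

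The main obstacle is establishing the uniform distortion bound $C_5$ for $\tilde\Phi_{k-1}/J_{\widetilde T^k}$ along an element of $\tilde\xi_k$, because $\tilde\Phi_{k-1}$ is itself defined by the same recursion and is not simply a smooth density times a Jacobian. The way around this is an induction paralleling the one in \cite[Lemma 3]{Y99}: one shows that each $\tilde\Phi_{j}$, for $i_1\le j\le k-1$, when transported by $\widetilde T^{j}$ has bounded oscillation on elements of $\eta\times\eta|_{\Delta_0\times\Delta_0}$, with a constant that does not grow with $j$ — here one uses that $\widetilde T=(T\times T)^S$ has the bounded-distortion property of Lemma~\ref{L32}, that the correction term at each step is itself a constant multiple (on each $\tilde\xi_j$-element) of $J_{\widetilde T^j}$, and that subtracting such a term from a density of bounded oscillation only improves (or at worst preserves up to a fixed factor) the oscillation. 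The choice of $i_1$ enters precisely so that from that index on the distortion constants in Lemmas~\ref{L26}, \ref{L28}, \ref{L32} no longer depend on $C_{\varphi_1}',C_{\varphi_2}'$ etc., which is what makes $C_5$ — and hence $\epsilon_1$ — uniform. Once the oscillation of $\tilde\Phi_{k-1}/J_{\widetilde T^k}$ on $\tilde\xi_k(u)$ is under control, the comparability of the minimum with the value at $u$ is immediate, and the lemma follows.
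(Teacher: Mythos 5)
Your reduction of the lemma to the comparability estimate
\[
J_{\widetilde T^k}(u)\min_{v\in\tilde\xi_k(u)}\frac{\tilde\Phi_{k-1}(v)}{J_{\widetilde T^k}(v)}\ \geq\ \frac{1}{C_5}\,\tilde\Phi_{k-1}(u)
\]
is exactly the right move, and the derivation of the lemma from it (with $\epsilon_1=\epsilon/C_5$ and the check that $\tilde\Phi_k\geq 0$) is correct. The paper itself does not prove the lemma --- it simply refers to \cite[Lemma 3]{Y99} --- so in that sense your attempt to supply the inductive content goes beyond the paper. However, your sketch of the induction contains a genuine error at its key step.

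You assert that ``subtracting such a term from a density of bounded oscillation only improves (or at worst preserves up to a fixed factor) the oscillation.'' This is false for a bare oscillation bound. Writing $\hat\Phi_k(u)=\tilde\Phi_k(u)/J_{\widetilde T^k}(u)$ and $g(u)=\tilde\Phi_{k-1}(u)/J_{\widetilde T^k}(u)$, the recursion \eqref{E49} reads $\hat\Phi_k=g-\epsilon\min_\Omega g$ on each $\Omega\in\tilde\xi_k$. If $\max_\Omega g/\min_\Omega g=C$, then
\[
\frac{\max_\Omega\hat\Phi_k}{\min_\Omega\hat\Phi_k}=\frac{C-\epsilon}{1-\epsilon}\ >\ C\qquad\text{whenever }C>1,
\]
so the oscillation ratio strictly \emph{increases} at every step and the induction you propose does not close. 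The mechanism that actually makes the induction close --- and which is the missing idea in your sketch --- is that one must track the H\"older-type modulus $\bigl|\log\bigl(\hat\Phi_i(u)/\hat\Phi_i(v)\bigr)\bigr|\leq L\,\beta^{\tilde s(\widetilde T^i u,\widetilde T^i v)}$ rather than a flat oscillation bound. Because each application of $\widetilde T$ decreases the separation time $\tilde s$ by one, passing from $\tilde\xi_{i-1}$-elements to $\tilde\xi_i$-elements gains a factor $\beta<1$, and it is precisely this gain that compensates both for the $\tfrac{1}{1-\epsilon}$ worsening seen above and for the additional distortion from the extra factor $J_{\widetilde T}(\widetilde T^{i-1}u)$. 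The constraint that the resulting self-consistency inequality (roughly $e^{2K}K/(1-\epsilon)\leq L$ with $K=L\beta+C_{\widetilde T}$) admits a solution is what forces the ``chosen properly'' conditions on $\epsilon$ (and explains the remark in the paper that $\epsilon$, and hence $\epsilon_1$ and $c'$, depend on $\beta$). Your proposal should be revised to carry this separation-time Lipschitz estimate through the induction instead of an oscillation bound.
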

\begin{remark}
It is worth mentioning that the constant $ \epsilon>0 $ in~\eqref{E49} depends on $ \beta $ and the same is true for the constant $ \epsilon_{1} $ in Lemma~\ref{L34}. Moreover, the number $i_1$ depends on $C_{\varphi_1}''$ and $C_{\varphi_2}''$.
\end{remark}

The densities $ \tilde{\Phi}_{i} $ are those of the total measure remaining in the system after $ i $ iterations by the induced map $ \widetilde{T} $. We are now going to discus the corresponding densities in real time iterations under $ T\times T $.
Let us introduce functions  $\Phi_0 \geq \Phi_1 \geq \Phi_2 \geq \cdots$ on $ \Delta\times \Delta $ as follows: for $ v\in \Delta\times \Delta $
\begin{equation*}
\Phi_n(v)=\tilde{\Phi}_i(v), \quad \text { if } \quad S_i(v) \leq n<S_{i+1}(v) .
\end{equation*}
Since, by definition, $ S_0 \leq i_1\leq S_{i_{1}}<S_{i_{1}+1} $, this implies that for all $ n\leq i_1 $, 
$ \Phi_n=\tilde{\Phi}_j$ for some  $0\leq j<i_1+1,$
and $\tilde{\Phi}_j=\Phi \text{ for all } 0\leq j<i_1+1$. Thus we have  
\begin{equation*}
\Phi_n=\Phi \text{ for all }  n\leq i_1.
\end{equation*}
Note that for all $ n\geq 1 $ we can write 
\begin{equation}\label{E64}
	\Phi=\Phi_n+\sum_{k=1}^n\left(\Phi_{k-1}-\Phi_k\right).
\end{equation}
For each $ k\geq 1 $, let $A_k=\cup_{i} A_{k, i}$,
where $A_{k, i}=\{u \in \Delta \times \Delta: k=$ $\left.S_i(u)\right\}$. Note that $A_{k, i} \cap A_{k, j}=\emptyset$ for $i \neq j$ (because $ S_i(u)\neq S_j(u) $ for $i \neq j$), and each $A_{k, i}$ is a union of elements of $\tilde{\xi}_i$.
\begin{remark}\label{R3}
By definition, for any $\Omega \in \tilde{\xi}_i|_{A_{k, i}}$, we have $ S_{i-1}|_\Omega< S_{i}|_\Omega=k $, and $S_{i-1}|_\Omega\leq k-1  $. This implies that $\Phi_{k-1}-\Phi_k=\tilde{\Phi}_{i-1}-\tilde{\Phi}_i$ on $\Omega \in \tilde{\xi}_i|_{A_{k, i}}$, and $\Phi_k=\Phi_{k-1}$ on $\Delta \times \Delta \backslash A_k$.
\end{remark}

Our main result in this subsection is
\begin{proposition}\label{first Matching Proposition}
There exists $C_{5}>0$, depending on $C_{\varphi_{1}}'', C_{\varphi_{2}}''$, such that for all $ n\geq 1$,
\begin{equation*}
\left|T_*^n \lambda_{1}-T_*^n \lambda_{2}\right|\leq	2P\{S>n\}+C_{5}\sum_{i=1}^{\infty}\left(1-\varepsilon_1\right)^i P\left\{S_i \leq n<S_{i+1}\right\}
\end{equation*}
\end{proposition}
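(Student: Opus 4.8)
The plan is to carry out Young's coupling argument: we track the portion of the measure $P = \lambda_1 \times \lambda_2$ that has \emph{already matched} versus the portion still circulating, and show that the unmatched part projects to the difference $T_*^n\lambda_1 - T_*^n\lambda_2$ while being controlled by the surviving density $\Phi_n$. First I would record the telescoping identity~\eqref{E64}, $\Phi = \Phi_n + \sum_{k=1}^n(\Phi_{k-1}-\Phi_k)$, and read off from Remark~\ref{R3} that each summand $\Phi_{k-1}-\Phi_k$ is supported on $A_k$, i.e.\ on the set where a simultaneous return to the base occurs at time exactly $k$. The key structural point — which is where the full-branch hypothesis is replaced by the long-branch property — is that on each $\Omega\in\tilde\xi_i|_{A_{k,i}}$ the density increment $\tilde\Phi_{i-1}-\tilde\Phi_i$ that is subtracted at step $i$ of~\eqref{E49} is, by Lemma~\ref{L34}, a genuine probability-like chunk of mass whose push-forward under $(T\times T)^{k}$ has \emph{equal marginals} (this is the ``matching'' built into the definition of $\tilde\Phi_k$: we subtract a multiple of $J_{\widetilde T^k}\cdot\min_{\tilde\xi_k}(\tilde\Phi_{k-1}/J_{\widetilde T^k})$, which is constant on images and hence symmetric in the two coordinates). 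Consequently the two marginals of $T_*^k$ applied to that chunk cancel, and only $\Phi_n$ — the mass not yet matched by time $n$ — contributes to $|T_*^n\lambda_1 - T_*^n\lambda_2|$.

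Concretely, I would write
\[
T_*^n\lambda_1 - T_*^n\lambda_2 \;=\; (\pi_1)_*(T\times T)^n_*P \;-\; (\pi_2)_*(T\times T)^n_*P,
\]
substitute $P = \Phi_n\,(m\times m) + \sum_{k=1}^n (\Phi_{k-1}-\Phi_k)(m\times m)$, and argue that every term in the sum drops out after taking marginals. For the $k\le i_1$ range there is nothing to subtract ($\Phi_k=\Phi$), so those terms are absent; for $k>i_1$, on each $\Omega\in\tilde\xi_i|_{A_{k,i}}$ the push-forward $(T\times T)^k_*\big((\tilde\Phi_{i-1}-\tilde\Phi_i)(m\times m)\mid\Omega\big)$ is a measure on $\Delta_0\times\Delta_0$ whose density with respect to $m\times m$ is constant (that is the content of subtracting the $\min$), hence its two marginals coincide, and then pushing forward by the remaining $n-k$ iterates of $T\times T$ preserves this equality of marginals. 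Summing over $\Omega$, over $i$, and over $k$ kills the entire sum. This leaves
\[
\big|T_*^n\lambda_1 - T_*^n\lambda_2\big| \;\le\; \big|(\pi_1)_*(T\times T)^n_*(\Phi_n\,m\times m) - (\pi_2)_*(T\times T)^n_*(\Phi_n\,m\times m)\big| \;\le\; 2\int \Phi_n\,d(m\times m).
\]

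It then remains to estimate $\int\Phi_n\,d(m\times m)$. Split according to how many induced returns have occurred by time $n$: on $\{S_i\le n<S_{i+1}\}$ we have $\Phi_n=\tilde\Phi_i$, and by iterating Lemma~\ref{L34} from level $i_1$ onwards, $\int\tilde\Phi_i\,d(m\times m)\le (1-\varepsilon_1)^{\,i-i_1}\int\tilde\Phi_{i_1}\,d(m\times m)$. Since $\tilde\Phi_{i_1}=\Phi$ has total mass $1$, this gives $\int\tilde\Phi_i\,d(m\times m)\le C_5'(1-\varepsilon_1)^i$ with $C_5'=(1-\varepsilon_1)^{-i_1}$ (absorbing the $i_1$ shift, which is why $C_5$ depends on $C_{\varphi_1}'',C_{\varphi_2}''$ through $i_1$). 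Hence
\[
\int\Phi_n\,d(m\times m) \;=\; \sum_{i\ge 0}\int_{\{S_i\le n<S_{i+1}\}}\tilde\Phi_i\,d(m\times m)
\;\le\; P\{S>n\} + C_5\sum_{i\ge 1}(1-\varepsilon_1)^i\,P\{S_i\le n<S_{i+1}\},
\]
where the $i=0$ term $\int_{\{S>n\}}\Phi\,d(m\times m)=P\{S>n\}$ is separated out (no decay factor, since no matching has happened yet), and combining with the factor $2$ above yields exactly the claimed bound. The main obstacle, and the step needing the most care, is the cancellation of marginals for the $\Phi_{k-1}-\Phi_k$ terms: one must verify that the chunk removed in~\eqref{E49} really does have symmetric (equal-marginal) push-forward under $(T\times T)^k$ — this uses that $\widetilde T^k$ maps each $\Omega\in\tilde\xi_i$ \emph{bijectively onto a full union of elements of $\eta\times\eta|_{\Delta_0\times\Delta_0}$} (a product set), so that subtracting a density constant in both coordinates lands on a product-structured piece of $\Delta_0\times\Delta_0$ whose two marginals are literally identical, and this symmetry is preserved by further iteration.
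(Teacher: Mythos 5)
Your setup and your estimate of $\int\Phi_n\,d(m\times m)$ via Lemma~\ref{L34} are sound, and the bound $I_1\leq 2\int\Phi_n\,d(m\times m)$ correctly handles the $\Phi_n$-piece. The genuine gap is the claim that each term in $\sum_{k=1}^n(\pi_1-\pi_2)_*(T\times T)^n_*\bigl((\Phi_{k-1}-\Phi_k)(m\times m)\bigr)$ vanishes.

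That cancellation would require the chunk removed at step $k$, once pushed forward by $\widetilde T^k$, to have \emph{identical} marginals. You argue this from the facts that the pushed-forward density is constant and the image is a product set. But a constant density $c$ on a product $A\times B$ has first marginal $c\,m(B)\,m|_A$ and second marginal $c\,m(A)\,m|_B$; these are \emph{not} the same measure unless $A=B$. In the full-branch setting of \cite{Y99} one indeed has $A=B=\Delta_0$, which is exactly why $I_2$ vanishes there. In the weak Gibbs--Markov setting, however, $\widetilde T^k(\Omega)$ is of the form $f^R(\omega_1)\times f^R(\omega_2)$ (cf.\ Remark~\ref{R2}), and in general $f^R(\omega_1)\neq f^R(\omega_2)$, since the map is not full branch. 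So the two marginals differ, the symmetry breaks, and $I_2\neq 0$; this is precisely what item~iv) of the introduction and the remark following the proposition emphasize. Your bound $|T_*^n\lambda_1-T_*^n\lambda_2|\leq 2\int\Phi_n\,d(m\times m)$ is therefore unjustified: the missing step is a direct estimate of $I_2$, e.g.\ $I_2\leq 2\sum_k\int(\Phi_{k-1}-\Phi_k)\,d(m\times m)$, which one then controls via \eqref{E49}, Remark~\ref{R3} and Lemma~\ref{L34} to obtain the same geometric-decay form $\sum_i(1-\varepsilon_1)^iP\{S_i\leq n<S_{i+1}\}$. The final formula you wrote happens to match the proposition because $I_2$ contributes a term of the same shape, but the argument as given has a hole at the cancellation step.
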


\begin{proof}
From~\eqref{E32} and~\eqref{E64}, for each $ n\geq 1$  we have 
\begin{equation}\label{step1}
\left|T_*^n \lambda_{1}-T_*^n \lambda_{2}\right| \leq I_1+ I_2,
\end{equation}
where
 \begin{equation*}
  I_1= \left|\left({\pi_1}_*-{\pi_2}_*\right)(T \times T)_*^n\left(\Phi_n(m \times m)\right)\right|
\end{equation*}
and
\begin{equation*}
I_2=\sum_{k=1}^n\left|\left({\pi_1}_*-{\pi_2}_*\right)\left[(T \times T)_*^n\left(\left(\Phi_{k-1}-\Phi_k\right)(m \times m)\right)\right]\right|.
\end{equation*}
From~\cite[Lemma 4]{Y99} there is $K_2>0$ such that 
\begin{equation}\label{step2}
I_1\leq 2 P\{S>n\}+2K_2\left(1-\varepsilon_1\right)^{-i_1} \sum_{i=1}^{\infty}\left(1-\varepsilon_1\right)^i P\left\{S_i \leq n<S_{i+1}\right\}.
\end{equation}

Now we are going to estimate $ I_2 $. We have
\begin{equation}\label{DE72}
\begin{aligned}
I_2&=\sum_{k=1}^n\left|\left({\pi_1}_*-{\pi_2}_*\right)\left[(T \times T)_*^n\left(\left(\Phi_{k-1}-\Phi_k\right)(m \times m)\right)\right]\right|\\
&\leq 2\sum_{k=1}^n \int_{\Delta\times\Delta} \left(\Phi_{k-1}-\Phi_k\right) d(m \times m)\\
&= 2\sum_{k=1}^n \int_{\Delta\times\Delta\setminus A_k} \left(\Phi_{k-1}-\Phi_k\right) d(m \times m)+2\sum_{k=1}^n \int_{A_k} (\left(\Phi_{k-1}-\Phi_k\right) d(m \times m).
\end{aligned}
\end{equation}
From Remark \ref{R3}, we have 
\begin{equation}\label{DE73}
	2\sum_{k=1}^n \int_{\Delta\times\Delta\setminus A_k} \left(\Phi_{k-1}-\Phi_k\right) d(m \times m)=0,
\end{equation}
and writing $A_k=\cup_{i=1}^{\infty} A_{k, i}=\left(\cup_{i=1}^{i_1} A_{k, i}\right)\cup\left(\cup_{i=i_1+1}^{\infty} A_{k, i}\right)$, we have
\begin{equation}
\begin{aligned}
	\int_{A_k} \left(\tilde{\Phi}_{k-1}-\tilde{\Phi}_k\right) d(m \times m)&=\sum_{i=1}^{i_1} \int_{A_{k,i}} \left(\tilde{\Phi}_{i-1}-\tilde{\Phi}_i\right) d(m \times m)\\ &\quad +\sum_{i=i_1+1}^{\infty} \int_{A_{k,i}} \left(\tilde{\Phi}_{i-1}-\tilde{\Phi}_i\right) d(m \times m).\label{DE75}
	\end{aligned}
\end{equation}
Since by definition $ \tilde{\Phi}_i=\Phi $, for all $ 0\leq i \leq i_1 $, the first sum in the above equality vanishes.
On the other hand, by~eqref{E49} and  Lemma~\ref{L34} we have
\begin{equation}\label{DE76}
\begin{aligned}
\sum_{i=i_1+1}^{\infty} \int_{A_{k,i}} \left(\tilde{\Phi}_{i-1}-\tilde{\Phi}_i\right) d(m \times m)
&=\sum_{i=i_1+1}^{\infty} \int_{A_{k,i}} \varepsilon J_{\widetilde{T}^i}(u) \min _{v \in A_{k,i}} \frac{\tilde{\Phi}_{i-1}(v)}{J_{\tilde{T}^i}(v)} d(m \times m)\\
&\leq\varepsilon\sum_{i=i_1+1}^{\infty} \int_{A_{k,i}}   \frac{J_{\widetilde{T}^i}(u)}{J_{\tilde{T}^i}(u)}\tilde{\Phi}_{i-1}(u) d(m \times m)\\
&= \varepsilon\sum_{i=i_1+1}^{\infty} \int_{A_{k,i}}   \tilde{\Phi}_{i-1}(u) d(m \times m)\\
&\leq \varepsilon\sum_{i=i_1+1}^{\infty} \int_{A_{k,i}} (1-\varepsilon_1)^{i-i_1-1} \Phi(u) d(m \times m)\\
&=\varepsilon\sum_{i=i_1+1}^{\infty}  (1-\varepsilon_1)^{i-i_1-1}P\{S_i=k\}.\\
\end{aligned}
\end{equation}
It follows from (\ref{DE72}), (\ref{DE73}), (\ref{DE75}) and  (\ref{DE76}) that
\begin{equation}\label{DE77}
I_2\leq 2\varepsilon\sum_{k=1}^{n} \sum_{i=i_1+1}^{\infty}  (1-\varepsilon_1)^{i-i_1-1}P\{S_i=k\}=2\varepsilon \sum_{i=i_1+1}^{\infty}  (1-\varepsilon_1)^{i-i_1-1}P\left(\bigcup_{k=1}^{n}\{S_i=k\}\right).
\end{equation}
In fact, for each $ 1\leq k\leq n $, we have $ \{S_i=k\}\subset \{S_i\leq n< S_{i+1}\} $, and this implies that $ \cup_{k=1}^{n}\{S_i=k\}\subset\{S_i\leq n< S_{i+1}\} $. Then from~\eqref{DE77} we have
\begin{equation}\label{step3}
I_2\leq 2\varepsilon (1-\varepsilon_1)^{-i_1-1} \sum_{i=i_1+1}^{\infty}  (1-\varepsilon_1)^{i}P\{S_i\leq n< S_{i+1}\}.
\end{equation}
It follows from \eqref{step1}, \eqref{step2} and \eqref{step3},
\begin{equation*}
\begin{aligned}
	\left|T_*^n \lambda_{1}-T_*^n \lambda_{2}\right|&\leq 
2P\{S>n\}+C_{5}\sum_{i=1}^{\infty}\left(1-\varepsilon_1\right)^i P\left\{S_i \leq n<S_{i+1}\right\},
\end{aligned}
\end{equation*}
with $C_{5}=2(K_{2}+\frac{\varepsilon}{1-\varepsilon_1})\left(1-\varepsilon_1\right)^{-i_1}.$ We recall that $i_1$ (hence $C_5$) depends on $C_{\varphi_{1}}''$ and $C_{\varphi_{2}}''$.
\end{proof}
\begin{remark}
In~\cite{Y99} the quantity $ I_2 $ vanishes due to the full branch property. 
\end{remark}
\begin{corollary}\label{C2}
There exists $ C_{6}>0 $, depending on $C_{\varphi_{1}}'', C_{\varphi_{2}}''$ such that for all $ n\geq 1$, 
\begin{equation*}
\left|T_*^n \lambda_{1}-T_*^n \lambda_{2}\right|\leq 2 P\{S>n\}+C_{6} \sum_{i=1}^{\infty}\left(1-\varepsilon_1\right)^i (i+1)(m\times m)\left\{S>\frac{n}{i+1}\right\}.
\end{equation*}
\end{corollary}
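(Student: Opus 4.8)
The plan is to start from the bound in Proposition~\ref{first Matching Proposition} and replace the term $P\{S_i\leq n< S_{i+1}\}$ by a quantity expressed solely through the tail $(m\times m)\{S>\cdot\}$. Since $\{S_i\leq n< S_{i+1}\}\subseteq\{S_{i+1}>n\}$, it suffices to estimate $P\{S_{i+1}>n\}$.

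The key step is a pigeonhole decomposition of the event $\{S_{i+1}>n\}$. Writing $S_{i+1}=\sum_{j=0}^{i}\left(S_{j+1}-S_j\right)$, note that if every summand satisfied $S_{j+1}-S_j\leq n/(i+1)$, then we would have $S_{i+1}\leq(i+1)\cdot n/(i+1)=n$; hence
$$\{S_{i+1}>n\}\subseteq\bigcup_{j=0}^{i}\left\{S_{j+1}-S_j>\frac{n}{i+1}\right\},$$
and therefore $P\{S_{i+1}>n\}\leq\sum_{j=0}^{i}P\left\{S_{j+1}-S_j>\tfrac{n}{i+1}\right\}$.

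Next I would apply Proposition~\ref{P38} to each of the $i+1$ terms, obtaining $P\{S_{j+1}-S_j>n/(i+1)\}\leq C_{4}(m\times m)\{S>n/(i+1)\}$ for every $0\leq j\leq i$, where $C_4$ depends on $C_{\varphi_1}''$ and $C_{\varphi_2}''$. Summing over $j$ yields
$$P\{S_i\leq n< S_{i+1}\}\leq P\{S_{i+1}>n\}\leq C_{4}\,(i+1)\,(m\times m)\left\{S>\frac{n}{i+1}\right\}.$$
Substituting this inequality into the estimate of Proposition~\ref{first Matching Proposition} then gives the claimed bound with $C_{6}=C_{4}C_{5}$, which again depends only on $C_{\varphi_1}''$ and $C_{\varphi_2}''$.

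There is no serious obstacle here; the argument is essentially a one-line pigeonhole estimate combined with Proposition~\ref{P38}. The only points requiring a little care are the bookkeeping of the number of summands — the decomposition involves the $i+1$ increments $S_1-S_0,\dots,S_{i+1}-S_i$, which is precisely what produces the factor $(i+1)$ and the argument $n/(i+1)$ in the statement — and checking that the constant in Proposition~\ref{P38} is uniform in $j$ (and in $k$), so that it may be pulled out of the sum.
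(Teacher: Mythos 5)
Your proof is correct and follows essentially the same approach as the paper: the paper likewise starts from Proposition~\ref{first Matching Proposition}, bounds $P\{S_i\leq n<S_{i+1}\}$ by $\sum_{j=0}^{i}P\{S_{j+1}-S_j>n/(i+1)\}$ (the same pigeonhole decomposition, stated there without the intermediate step through $\{S_{i+1}>n\}$), and then applies Proposition~\ref{P38} to each summand. Your explicit inclusion $\{S_i\leq n<S_{i+1}\}\subseteq\{S_{i+1}>n\}\subseteq\bigcup_{j=0}^{i}\{S_{j+1}-S_j>n/(i+1)\}$ simply spells out what the paper leaves implicit.
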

\begin{proof}
For each $ i\geq 1 $ we have 
\begin{equation}\label{DDE79}
	P\left\{S_i \leq n<S_{i+1}\right\} \leq \sum_{j=0}^i P\left\{S_{j+1}-S_j>\frac{n}{i+1}\right\}.
\end{equation}
By Proposition \ref{P38},  
\begin{equation}\label{DE78}
	P\left\{S_{j+1}-S_j>\frac{n}{i+1}\right\}\leq C_{4}(m\times m)\left\{S>\frac{n}{i+1}\right \}.
\end{equation}
Combining (\ref{DDE79}) and (\ref{DE78}), we get
\begin{equation*}
P\left\{S_i \leq n<S_{i+1}\right\} \leq C_{4} (i+1)(m\times m)\left\{S>\frac{n}{i+1}\right\},
\end{equation*}
which, together with Proposition \ref{first Matching Proposition}, gives us the required result.
\end{proof}

\subsection{Polynomial decay}\label{Polynomial estimate} We are going to prove item (1) of  Proposition \ref{Convergence to equilibrium }.
Assume that there are $C>0$ and $a>1$ such that for all $n \geq 1$ we have $\m\{R>n\} \leq C n^{-a}$.
From  (\ref{E29}) there exists $\hat C$ such that
\begin{equation}\label{E80}
	\m\{\hat{R}>n\}=\sum_{\ell>n} \m\{R>\ell\}\leq \hat{C} n^{-a+1}.
\end{equation}
Recalling Lemma~\ref{L27} and by Lemma~\ref{L28} and (\ref{E80}) we obtain, respectively:
\begin{itemize}
\item [$(A_1)$] There exists $ \epsilon_{0}>0$, depending on $ C_{\varphi_1}', C_{\varphi_2}' $,  such that for all $ k\geq 1 $ and $ \Gamma \in\xi_{k}  $ with $ S|_\Gamma>\tau_{k-1}$, we have 
	\begin{equation*}
		P(S=\tau_{k} | \Gamma)\geq \epsilon_{0}.
	\end{equation*}
\item [$(A_2)$] For all  $n, k\geq 0$ and for all $ \Gamma \in\xi_{k}$, we have 
\begin{equation*}
	P(\tau_{k+1}- \tau_{k}>n+n_{0}| \Gamma)\leq C_{2} \hat{C}n^{-a+1}.
\end{equation*}
\end{itemize}
By~\cite[Proposition 3.46]{Alves Book} there exists $\bar C>0$ such that  
\begin{equation*}
	P\{S>n\} \leq\left(1-\epsilon_0\right)^{\left[\frac{n}{2 n_0+1}\right]}+\bar{C} \sum_{j=1}^{\infty}(j+1)^{a+1}\left(1-\epsilon_0\right)^{j-1}n^{-a+1}. 
\end{equation*}
This implies that there exists $\bar{C_{1}}>0$
\begin{equation}\label{Final estimatr of simulatainious RT}
P\{S>n\}\leq\bar{C_{1}}n^{-a+1}.
\end{equation} 
By similar arguments used to estimate $P\{S>n\}$, there exists $\bar{C_{2}}>0$ such that 
\begin{equation}\label{estimatr of simulatainious RT with mxm}
	(m\times m)\{S>\frac{n}{i+1}\}\leq\bar{C_{2}} \left(\frac{n}{i+1}\right)^{-a+1}.
\end{equation}
 By using the estimates \eqref{Final estimatr of simulatainious RT} and \eqref{estimatr of simulatainious RT with mxm} in  Corollary \ref{C2}, we get
\begin{equation}\label{estimate of Tlambda_1-Tlambda_2 }
	\left|T_*^n \lambda_{1}-T_*^n \lambda_{2}\right|\leq C^{\prime}  n^{-a+1} \text{ for some } C^{\prime}>0.
\end{equation}
 Considering $ \lambda_{1}=\lambda $ and $\lambda_{2}=\nu $ in \eqref{estimate of Tlambda_1-Tlambda_2 } we conclude
\begin{equation*}
	\left|T_*^n \lambda-\nu\right|\leq C^{\prime}n^{-a+1}.
\end{equation*}

\subsection{(Stretched) Exponential decay}\label{Exponential estimates} We are now going to prove the remaining item (2) of  Proposition~\ref{Convergence to equilibrium }.
Assume that there are $C, c>0$ and $0<a \leq 1$ such that $m\{R>n\} \leq C e^{-c n^a}$ for all $n \geq 1$. Then, there exists $\hat{C}>0$ such that
\begin{equation*}
	m\{\hat{R}>n\}=\sum_{\ell>n} m\{R>\ell\} \leq \hat{C} e^{-c n^a},
\end{equation*}
which together with Lemma~\ref{L27}, Lemma~\ref{L28} and \cite[Proposition 3.48]{Alves Book} gives
\begin{equation*}
	P\{S>n\}\leq \bar{C_{3}} e^{-c_{0} n^a} \text{ for some } \bar{C_{3}}>0.
\end{equation*} 
By similar arguments as used in polynomial case we conclude that
\begin{equation*}
	\left|T_*^n \lambda-\nu\right|\leq C^{\prime} e^{-c^{\prime} n^a} \text{ for some } C^{\prime}, c^{\prime}>0.
\end{equation*}
\begin{remark}
	Recall that the constant $ \epsilon_{1} $ depends on $ \beta $. This does not affect exponent $a$ in the polynomial case. On the other hand, this causes the exponent $ c^{\prime} $ to depend on $ \beta $ in the (stretched) exponential case. 
\end{remark}

\section{Central Limit Theorem}\label{sec:CLT}

In this section we prove Corollary~\ref{CLT Main Cror}. We start by recalling a version of~\cite[Theorem~1.1]{Liverane CLT theorem}. 
\begin{theorem}\label{Liverani CLT}
Let $(X, \mathcal{F}, \upsilon)$ be a probability space, $T\colon X \to X$ a (non-invertible) measurable map and $ \upsilon $ an ergodic $ T$-invariant probability measure. Let $\varphi \in {L}^{\infty}(X, \upsilon)$ be such that $\int \varphi d \upsilon=$ 0. Assume
\begin{itemize}
\item[i)] $ \sum_{n=1}^{\infty}\left|\int\left(\varphi \circ T^n\right) \varphi d \upsilon\right|<\infty $
\item[ii)] $ \sum_{n=1}^{\infty}\left(\hat{T}^{* n} \varphi\right)(x) \text { is absolutely convergent for }\upsilon \text {-a.e. } x, $
\end{itemize}
where $\hat{T}^*$ is the dual of the operator $\hat T\varphi = \varphi \circ T$. Then the Central Limit Theorem holds for $\varphi$ if and only if $\varphi$ is not coboundary.
\end{theorem}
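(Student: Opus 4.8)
The plan is to prove this by the martingale approximation method going back to Gordin, adapted to the non-invertible setting as in L.-S. Young's and Liverani's treatments. Write $\hat T\varphi=\varphi\circ T$ and let $P=\hat T^{*}$ be the transfer operator associated with $\upsilon$, i.e.\ the adjoint of $\hat T$ on $L^{2}(\upsilon)$; one recalls the elementary identities $P\hat T=\mathrm{id}$, $P\bigl(f\cdot(g\circ T)\bigr)=g\cdot Pf$, $\int Pf\,d\upsilon=\int f\,d\upsilon$, and $\mathbb{E}_{\upsilon}[f\mid T^{-n}\mathcal F]=(P^{n}f)\circ T^{n}$; in particular $\{T^{-n}\mathcal F\}_{n\ge 0}$ is a \emph{decreasing} filtration. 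First I would use hypothesis ii) to define the measurable function $h=\sum_{n\ge1}P^{n}\varphi$, which is finite $\upsilon$-a.e.; the algebraic identity $h-Ph=P\varphi$ then shows that $\psi:=\varphi+h-h\circ T$ satisfies $P\psi=0$, giving the coboundary decomposition $\varphi=\psi+\chi\circ T-\chi$ with $\chi:=h$.

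The technically central step is to verify $\psi\in L^{2}(\upsilon)$: since $\varphi\in L^{\infty}$ and $\|h\circ T\|_{2}=\|h\|_{2}$, this reduces to $h\in L^{2}(\upsilon)$, and this is exactly where hypothesis i) is used — the summability of the correlations bounds the partial sums $\sum_{n=1}^{N}P^{n}\varphi$ uniformly in $L^{2}(\upsilon)$, and, combined with the a.e.\ convergence from ii) (via Fatou), this yields $h\in L^{2}$. Granting this, $(\psi\circ T^{k})_{k\ge0}$ is a stationary, ergodic sequence of reverse martingale differences for the decreasing filtration $T^{-k}\mathcal F$, because $\mathbb{E}_{\upsilon}[\psi\circ T^{k}\mid T^{-(k+1)}\mathcal F]=(P\psi)\circ T^{k+1}=0$; the terms are pairwise orthogonal in $L^{2}$, so $\tfrac1n\bigl\|\sum_{k=0}^{n-1}\psi\circ T^{k}\bigr\|_{2}^{2}=\|\psi\|_{2}^{2}=:\sigma^{2}$, and the classical central limit theorem for stationary ergodic (reverse) martingale differences gives $\tfrac1{\sqrt n}\sum_{k=0}^{n-1}\psi\circ T^{k}\to\mathcal N(0,\sigma^{2})$ in distribution. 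Telescoping the coboundary, $\sum_{k=0}^{n-1}\varphi\circ T^{k}=\sum_{k=0}^{n-1}\psi\circ T^{k}+h\circ T^{n}-h$, and since $h$ is finite a.e.\ and $\upsilon$ is $T$-invariant we have $\tfrac1{\sqrt n}(h\circ T^{n}-h)\to0$ in probability; by Slutsky's lemma $\tfrac1{\sqrt n}\sum_{k=0}^{n-1}\varphi\circ T^{k}\to\mathcal N(0,\sigma^{2})$ whenever $\sigma^{2}>0$.

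It then remains to identify when $\sigma^{2}>0$. On the one hand, if $\varphi$ is a coboundary, say $\varphi\circ T=g\circ T-g$, then $\sum_{k=0}^{n-1}\varphi\circ T^{k}=\varphi+g\circ T^{n-1}-g$, which divided by $\sqrt n$ converges to $0$ in probability, so the sums cannot converge to a non-degenerate Gaussian. On the other hand, if $\varphi$ is not a coboundary then $\psi\not\equiv0$ — indeed $\psi\equiv0$ would force $\varphi=h\circ T-h$, which is a coboundary — so $\sigma^{2}=\|\psi\|_{2}^{2}>0$ and the CLT holds by the previous paragraph; this establishes the asserted equivalence. I expect the main obstacle to be precisely the $L^{2}$-regularity of $h$: extracting a uniform $L^{2}$-bound on $\sum_{n=1}^{N}P^{n}\varphi$ from hypotheses i) and ii), and, throughout, keeping the bookkeeping straight with the decreasing filtration $T^{-n}\mathcal F$ and the reverse (rather than ordinary) martingale differences forced by the non-invertibility of $T$.
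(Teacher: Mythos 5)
A preliminary remark on the comparison: the paper does not prove this statement at all — it is recalled as a version of Liverani's Theorem~1.1 and used as a black box in the proof of Proposition~\ref{CLT P1}. So the benchmark is Liverani's own martingale-approximation proof, which is the same general strategy you follow: set $h=\sum_{n\ge1}\hat T^{*n}\varphi$ (defined a.e.\ by ii)), write $\varphi=\psi+h\circ T-h$ with $\hat T^{*}\psi=0$, apply the CLT for stationary ergodic reverse martingale differences, and dispose of the boundary term by stationarity. Your algebra giving $\hat T^*\psi=0$, the orthogonality of the $\psi\circ T^{k}$, the negligibility of $n^{-1/2}(h\circ T^{n}-h)$ (which indeed needs only a.e.\ finiteness of $h$ plus invariance), and the coboundary dichotomy are all fine.

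The genuine gap is the step you yourself single out as central: the claim that hypothesis i) bounds $\bigl\|\sum_{n=1}^{N}\hat T^{*n}\varphi\bigr\|_{2}$ uniformly in $N$, whence $h\in L^{2}$ by Fatou. Expanding the square produces the cross terms $\int (\hat T^{*n}\varphi)(\hat T^{*m}\varphi)\,d\upsilon$, and for $n\ge m$ duality turns this into $\int \varphi\cdot\bigl((\hat T^{*m}\varphi)\circ T^{n}\bigr)\,d\upsilon$, a correlation of $\varphi$ against $\hat T^{*m}\varphi$, not against $\varphi$. For non-invertible $T$ the transfer operator is only a co-isometry on $L^{2}(\upsilon)$ ($\hat T^{*}\hat T=\mathrm{id}$ but $\hat T\hat T^{*}=\mathbb{E}[\,\cdot\mid T^{-1}\mathcal F]$), so these inner products cannot be identified with the autocorrelations $\int\varphi\,(\varphi\circ T^{n-m})\,d\upsilon$ that i) makes summable; that identification is exactly the one valid only in the invertible/unitary setting, and it is the non-invertibility you otherwise track carefully that breaks it. As written, square-integrability of $\psi$ — the hypothesis of the stationary ergodic (reverse) martingale CLT — is unproved, and this is precisely the delicate point separating Liverani's hypotheses from Gordin's stronger condition $\sum_{n}\|\hat T^{*n}\varphi\|_{2}<\infty$; if i) alone yielded the uniform $L^{2}$ bound, hypothesis ii) would be nearly superfluous, which it is not. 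A secondary, related issue: with $h$ only known to be finite a.e., applying $\hat T^{*}$ term by term to the series (to get $\hat T^{*}h=h-\hat T^{*}\varphi$ and hence $\hat T^{*}\psi=0$) needs justification, since $h$ need not be integrable; this would become automatic once the missing $L^{2}$ (or at least $L^{1}$) control is actually established.
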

The dual operator $\hat{T}^*$ above is the Perron-Frobenius operator with respect to $T$ and $\upsilon$, that is
$$
(\hat{T}^* \varphi)(x)=\sum_{y\in T^{-1}(x)} \frac{\varphi(y)}{J_T(y)},
$$
where $J_{T}$ here is defined in terms of the measure $\upsilon$.

\begin{proposition}\label{CLT P1}
	Let $ T:\Delta\rightarrow\Delta $ be the tower map of an aperiodic induced WGM expanding map $f^{R}$ with a coprime block, $R \in L^{1}(\m)$, and let $\nu $ be the unique ergodic $ T$-invariant probability measure. If $m\{ R >n \} \leq C n^{-a}$ for some $C>0$ and $a >2$, then the CLT is satisfied for all $\varphi \in \mathcal{F}_{\beta}(\Delta)$ if and only if $\varphi$ is not coboudary.
\end{proposition}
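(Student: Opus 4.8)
The plan is to deduce the proposition from Theorem~\ref{Liverani CLT} applied with $X=\Delta$ and $\upsilon=\nu$, so that it suffices to check hypotheses (i) and (ii) of that theorem for an arbitrary $\varphi\in\mathcal F_\beta(\Delta)$. We may assume $\varphi\not\equiv 0$ (otherwise $\varphi$ is trivially a coboundary) and, after two harmless reductions, that $\int\varphi\,d\nu=0$. Indeed, $\mathcal F_\beta(\Delta)\subset L^\infty(m)$ and, by Theorem~\ref{T1}, $0<d\nu/dm\le C_0$, so $\nu$ and $m$ are equivalent; hence $\varphi\in L^\infty(\nu)$, as required by Theorem~\ref{Liverani CLT}, and $L^\infty(m)=L^\infty(\nu)$. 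Since the constant functions lie in $\mathcal F_\beta(\Delta)$ and subtracting one does not alter the $\beta$-seminorm, replacing $\varphi$ by $\varphi-\int\varphi\,d\nu$ keeps us in $\mathcal F_\beta(\Delta)$ and makes $\int\varphi\,d\nu=0$; with this normalization $\bigl|\int(\varphi\circ T^n)\psi\,d\nu\bigr|=\Cor_\nu(\varphi,\psi\circ T^n)$ for every $\psi\in L^\infty(m)$.

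The key preliminary step is to record the tower decay of correlations with a constant that is \emph{uniform in the second observable}. By Lemma~\ref{lemma:phi*} the density $\varphi^*\,\tfrac{d\nu}{dm}$ belongs to $\mathcal F_\beta^+(\Delta)$, so Proposition~\ref{Convergence to equilibrium } applies to the probability measure $\lambda$ with $\tfrac{d\lambda}{dm}=\varphi^*\,\tfrac{d\nu}{dm}$ and yields $|T_*^n\lambda-\nu|\le C'n^{-a+1}$ with $C'$ depending only on $\varphi$. Inserting this into Lemma~\ref{D.Correlation concection of T and convergence of equlibrium}(ii) produces a constant $C_\varphi>0$, \emph{independent of $\psi$}, such that $\Cor_\nu(\varphi,\psi\circ T^n)\le C_\varphi\,\|\psi\|_\infty\,n^{-a+1}$ for all $\psi\in L^\infty(m)$ and all $n\ge1$.

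Hypothesis (i) then follows at once: taking $\psi=\varphi$ gives $\bigl|\int(\varphi\circ T^n)\varphi\,d\nu\bigr|\le C_\varphi\|\varphi\|_\infty\,n^{-a+1}$, and $\sum_{n\ge1}n^{-a+1}<\infty$ because $a>2$. For hypothesis (ii), let $P=\hat T^*$ denote the transfer operator of $T$ with respect to $\nu$, so that $\int(P^n\varphi)\,\psi\,d\nu=\int\varphi\,(\psi\circ T^n)\,d\nu$ for $\psi\in L^\infty(m)$. Choosing the measurable function $\psi_n=\operatorname{sgn}(P^n\varphi)$, which satisfies $\|\psi_n\|_\infty\le 1$, and using $\int\varphi\,d\nu=0$, we obtain $\|P^n\varphi\|_{L^1(\nu)}=\bigl|\int\varphi\,(\psi_n\circ T^n)\,d\nu\bigr|=\Cor_\nu(\varphi,\psi_n\circ T^n)\le C_\varphi\,n^{-a+1}$. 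Since $a>2$, $\sum_{n\ge1}\|P^n\varphi\|_{L^1(\nu)}<\infty$; by monotone convergence $\int\bigl(\sum_{n\ge1}|P^n\varphi|\bigr)\,d\nu<\infty$, hence $\sum_{n\ge1}|P^n\varphi|<\infty$ $\nu$-a.e., which is exactly (ii). Theorem~\ref{Liverani CLT} then gives that the CLT holds for $\varphi$ if and only if $\varphi$ is not a coboundary.

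I expect the only genuinely delicate point to be the uniformity in $\psi$ of the constant $C_\varphi$ in the correlation estimate, since it is precisely this uniformity that makes the duality argument for (ii) go through; it has to be extracted by tracking how the constants in Lemma~\ref{D.Correlation concection of T and convergence of equlibrium}(ii) and in the proof of Proposition~\ref{Convergence to equilibrium } depend on the density $\varphi^*\,\tfrac{d\nu}{dm}$ but not on the test function. The stronger hypothesis $a>2$, rather than the $a>1$ that suffices for decay of correlations alone, enters precisely because both (i) and (ii) require the series $\sum_{n\ge1}n^{-a+1}$ to converge.
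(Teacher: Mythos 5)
Your proof is correct, and for verifying hypothesis (i) you proceed exactly as the paper does; but for hypothesis (ii) you take a genuinely different route. The paper decomposes $\phi\rho$ explicitly as a difference $b^{-1}\bigl(\tfrac{d\lambda_1}{dm}-\tfrac{d\lambda_2}{dm}\bigr)$ with both densities in $\mathcal F_\beta^+(\Delta)$, re-enters the matching machinery of Section~\ref{matching section} to bound $|\hat P^n(\phi\rho)|$ pointwise by $b^{-1}(\psi_n+\psi_n')$, and then integrates using the estimates on $I_1,I_2$ from Section~\ref{Polynomial estimate}. You instead use the duality $\int(P^n\varphi)\psi\,d\nu=\int\varphi(\psi\circ T^n)\,d\nu$ together with the sign-function test $\psi_n=\operatorname{sgn}(P^n\varphi)$ to convert $\|P^n\varphi\|_{L^1(\nu)}$ directly into a correlation against a bounded observable, and then invoke the fact, read off from Lemma~\ref{D.Correlation concection of T and convergence of equlibrium}(ii) and Proposition~\ref{Convergence to equilibrium }, that the decay constant depends on $\psi$ only through $\|\psi\|_\infty$. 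Both arguments ultimately rest on the same decay estimate and on the monotone-convergence step $\sum_n\|P^n\varphi\|_{L^1}<\infty\Rightarrow\sum_n|P^n\varphi|<\infty$ a.e., but yours is cleaner in that it does not require going back into the explicit $\Phi_k$-decomposition of the matching argument; the price is that one must verify the $\psi$-uniformity of the correlation bound, which you correctly isolate as the crucial point and which is indeed delivered by Lemma~\ref{D.Correlation concection of T and convergence of equlibrium}(ii), since the measure $\lambda$ there depends on $\varphi$ alone.
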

\begin{proof}
From $ii)$ in Lemma~\ref{D.Correlation concection of T and convergence of equlibrium} and Proposition~\ref{Convergence to equilibrium } there is some $ C^{\prime}>0 $ such that 
\begin{equation}\label{summable decay}
\Cor_{\nu}(\varphi, \varphi \circ T^{n})\leq C^{\prime} n^{-a+1}.
\end{equation}
 Let $\phi=\varphi-\int \varphi d \nu$, so that $\int \phi d \nu=0$. We are going to show that $\phi$ satisfies  conditions i) and ii)  of the Theorem \ref{Liverani CLT}. It is straightforward to check that $$ \Cor_{\nu}(\varphi, \varphi \circ T^{n})=\Cor_{\nu}(\phi, \phi \circ T^{n})=\left|\int(\phi \circ T^n) \phi d \nu\right|.$$ It follows from \eqref{summable decay} that condition i) holds.
Since $m$ and $\nu$ are equivalent measures, it suffices to verify condition ii) for $m$-a.e. $x\in\Delta$. The operator $\hat{T}^*$ is defined in terms of the invariant measure, so for a measure $\lambda \ll m$ it sends $\frac{d \lambda}{d \nu}$ to $\frac{d T_* \lambda}{d \nu}$. We can write
$$
(\hat{T}^{* n} \phi)(x)=\frac{1}{\rho(x)}(\hat{P}^n(\phi \rho))(x),
$$
where $\hat{P}$ is the Perron-Frobenius operator with respect to $m$, that sends  $\frac{d \lambda}{d m}$ to $\frac{d T_* \lambda}{d m}$.
We shall now write $\phi$ as the difference of the densities of two (positive) measures of similar regularity to $\phi$. We let $\tilde{\phi}=b(\phi+a)$, for some large $a$, with $b>0$ chosen such that $\int \tilde{\phi} \rho d m=1$. We define the probability measures $\lambda_{1}, \lambda_{2}$ by
$$
\frac{d \lambda_{1}}{d m}=(b \phi+\tilde{\phi}) \rho,\quad \frac{d \lambda_{2}}{d m}=\tilde{\phi} \rho .
$$
Similar to Lemma~\ref{lemma:phi*} we can chech that $\frac{d \lambda_{1}}{d m}, \frac{d \lambda_{2}}{d m} \in\mathcal{F}_{\beta}^{+}(\Delta) $. 
Moreover,
$$
b^{-1}\left(\frac{d \lambda_{1}}{d m}-\frac{d \lambda_{2}}{d m}\right)=\phi \rho.
$$
Following Section~\ref{matching section}, for these given measures $\lambda_{1}$ and $\lambda_{2}$  we have
\begin{equation*}
	\begin{aligned}
	T_*^n \lambda_{1}-T_*^n \lambda_{2}&= {\pi_1}_*(T \times T)_*^n\left([\Phi_n+\sum_{k=1}^n(\Phi_{k-1}-\Phi_k)](m \times m)\right)\\
	&+{\pi_2}_*(T \times T)_*^n\left([\Phi_n+\sum_{k=1}^n(\Phi_{k-1}-\Phi_k)](m \times m)\right).
	\end{aligned}
\end{equation*}
Let $\psi_n, \psi_n'$ be the densities with respect to $m$ of the first and the second term above, respectively. Since $\hat{P}$ is a linear operator, we see that
$$
\begin{aligned}
	|\hat{P}^n(\phi \rho)|&=\left|\hat{P}^n\left(b^{-1}\left(\frac{d \lambda_{1}}{d m}-\frac{d \lambda_{2}}{d m}\right)\right)\right|\\
	& =b^{-1}\left|\frac{d T_*^n \lambda_1}{d m}-\frac{d T_*^n \lambda_2}{d m}\right| \\
	& \leq b^{-1}\left(\psi_n+\psi_n^{\prime}\right).
\end{aligned}
$$
Then by using above estimates on $ I_{1}, I_{2} $ and estimates in Section~\ref{Polynomial estimate}, we have $$\int \psi_n d m=\int \psi_n^{\prime} d m=\int (\Phi_n+\sum_{k=1}^n(\Phi_{k-1}-\Phi_k)) d(m \times m)\leq Cn^{-a+1},$$  for some $ C>0 $. This implies that $$\sum_{n=1}^{\infty}\int_{ \Delta}|\hat{P}^n(\phi \rho)|dm \leq C^{\prime}\sum_{n=1}^{\infty}n^{-a+1} < \infty,$$ for some $ C^{\prime}>0.$  We must have that $\sum_{n=1}^{\infty}|\hat{P}^n(\phi \rho)(x)|$ is convergent for  $ m$-a.e. $x$. This implies that $$\sum_{n=1}^{\infty}|(\hat{T}^{* n} \phi)(x)|=\frac{1}{\rho(x)}\sum_{n=1}^{\infty}|\hat{P}^n(\phi \rho)(x)|$$ is convergent for  $ m$-a.e. $x$, and condition ii) holds.
\end{proof}

Since $\varphi\in\mathcal H_\eta$, by Lemma~\ref{lemma: phi circ pi}, $\varphi\circ \pi\in\mathcal F_{\beta^\eta}(\Delta)$. Notice that $\varphi$ is not coboundary if and only if $\varphi\circ \pi$ is not coboundary. For every interval $ J\subset \mathbb{R} $,
\begin{equation*}
\begin{aligned}
	\pi_{*}\nu &\left\{ y\in M : \frac{1}{\sqrt{n}}\sum_{i=0}^{n-1}\left(\varphi( f^{i}(y))- \int \varphi d\pi_{*}\nu \right) \in J \right\}\\
	&=\nu \left\{ x\in \Delta : \frac{1}{\sqrt{n}}\sum_{i=0}^{n-1}\left(\varphi( f^{i}(\pi(x)))- \int \varphi d\pi_{*}\nu \right)\in J \right\}\\
	&=\nu \left\{ x\in \Delta : \frac{1}{\sqrt{n}}\sum_{i=0}^{n-1}\left(\varphi\circ\pi(T^i(x))- \int \varphi\circ\pi d\nu \right)\in J \right\}.
\end{aligned}
\end{equation*}	
The proof of Corollary~\ref{CLT Main Cror} follows now from Proposition~\ref{CLT P1} since $\pi_*\nu=\mu$.

\bibliographystyle{alpha}

\end{document}